\def\paragraph{\@startsection{paragraph}{4}%
  \z@\z@{-\fontdimen2\font}%
  \itshape}
\definecolor{rwth}   {RGB}{  0  84 159}
\definecolor{rwth-75}{RGB}{ 64 127 183}
\definecolor{rwth-50}{RGB}{142 186 229}
\definecolor{rwth-25}{RGB}{199 221 242}
\definecolor{rwth-10}{RGB}{232 241 250}
\definecolor{black}   {RGB}{  0   0   0}
\definecolor{black-75}{RGB}{100 101 103}
\definecolor{black-50}{RGB}{156 158 159}
\definecolor{black-25}{RGB}{207 209 210}
\definecolor{black-10}{RGB}{236 237 237}
\definecolor{magenta}   {RGB}{227   0 102}
\definecolor{magenta-75}{RGB}{233  96 136}
\definecolor{magenta-50}{RGB}{241 158 177}
\definecolor{magenta-25}{RGB}{249 210 218}
\definecolor{magenta-10}{RGB}{253 238 240}
\definecolor{yellow}   {RGB}{255 237   0}
\definecolor{yellow-75}{RGB}{255 240  85}
\definecolor{yellow-50}{RGB}{255 245 155}
\definecolor{yellow-25}{RGB}{255 250 209}
\definecolor{yellow-10}{RGB}{255 253 238}
\definecolor{petrol}   {RGB}{  0  97 101}
\definecolor{petrol-75}{RGB}{ 45 127 131}
\definecolor{petrol-50}{RGB}{125 164 167}
\definecolor{petrol-25}{RGB}{191 208 209}
\definecolor{petrol-10}{RGB}{230 236 236}
\definecolor{turkis}   {RGB}{  0 152 161}
\definecolor{turkis-75}{RGB}{  0 177 183}
\definecolor{turkis-50}{RGB}{137 204 207}
\definecolor{turkis-25}{RGB}{202 231 231}
\definecolor{turkis-10}{RGB}{235 246 246}
\definecolor{grun}   {RGB}{ 87 171  39}
\definecolor{grun-75}{RGB}{141 192  96}
\definecolor{grun-50}{RGB}{184 214 152}
\definecolor{grun-25}{RGB}{221 235 206}
\definecolor{grun-10}{RGB}{242 247 236}
\definecolor{maigrun}   {RGB}{189 205   0}
\definecolor{maigrun-75}{RGB}{208 217  92}
\definecolor{maigrun-50}{RGB}{224 230 154}
\definecolor{maigrun-25}{RGB}{240 243 208}
\definecolor{maigrun-10}{RGB}{249 250 237}
\definecolor{orange}   {RGB}{246 168   0}
\definecolor{orange-75}{RGB}{250 190  80}
\definecolor{orange-50}{RGB}{253 212 143}
\definecolor{orange-25}{RGB}{254 234 201}
\definecolor{orange-10}{RGB}{255 247 234}
\definecolor{rot}   {RGB}{204   7  30}
\definecolor{rot-75}{RGB}{216  92  65}
\definecolor{rot-50}{RGB}{230 150 121}
\definecolor{rot-25}{RGB}{243 205 187}
\definecolor{rot-10}{RGB}{250 235 227}
\definecolor{bordeaux}   {RGB}{161  16  53}
\definecolor{bordeaux-75}{RGB}{182  82  86}
\definecolor{bordeaux-50}{RGB}{205 139 135}
\definecolor{bordeaux-25}{RGB}{229 197 192}
\definecolor{bordeaux-10}{RGB}{245 232 229}
\definecolor{violett}   {RGB}{ 97  33  88}
\definecolor{violett-75}{RGB}{131  78 117}
\definecolor{violett-50}{RGB}{168 133 158}
\definecolor{violett-25}{RGB}{210 192 205}
\definecolor{violett-10}{RGB}{237 229 234}
\definecolor{lila}   {RGB}{122 111 172}
\definecolor{lila-75}{RGB}{155 145 193}
\definecolor{lila-50}{RGB}{188 181 215}
\definecolor{lila-25}{RGB}{222 218 235}
\definecolor{lila-10}{RGB}{242 240 247}
\numberwithin{equation}{section} % count Thm. etc. inluding section number (apparently good to load before cleverref)
\renewcommand{\ALG@name}{\sc Algorithm} % nice font for algorithm
\pgfplotsset{compat=newest}
\pgfplotsset{plot coordinates/math parser=false}
\newif\ifuseprecompiled
\renewcommand{\email}[1]{\protect\href{mailto:#1}{#1}} % stops ams from printing email at end of document
\title[Asymptotic Log-Det Sum-of-Ranks Minimization via Tensor (A)IRLS]{Asymptotic Log-Det Sum-of-Ranks Minimization via Tensor (Alternating) Iteratively Reweighted Least Squares}
\author{Sebastian Kr\"amer}
\thanks{Institut f\"ur Geometrie und Praktische Mathematik,  RWTH Aachen University,  Templergraben 55,
52056 Aachen, Germany
  (\email{kraemer@igpm.rwth-aachen.de}, \url{https://www.igpm.rwth-aachen.de}).}
\theoremstyle{plain}
\newtheorem{theorem}{Theorem}[section]
\newtheorem{definition}[theorem]{Definition}
\newtheorem{proposition}[theorem]{Proposition}
\newtheorem{corollary}[theorem]{Corollary}
\newtheorem{lemma}[theorem]{Lemma}
\theoremstyle{definition}
\newtheorem{example}[theorem]{Example}
\newtheorem{experiment}[theorem]{Experiment}
\theoremstyle{remark} % not italic
\newtheorem{remark}[theorem]{Remark}
\crefname{experiment}{Experiment}{experiments}
\crefname{hypothesis}{Hypothesis}{Hypotheses}
\crefname{equation}{}{} % no Eq. in front of equation references
\newcommand{\R}{\ensuremath{\mathbb{R}}}
\newcommand{\N}{\ensuremath{\mathbb{N}}}
\DeclareMathOperator{\diag}{diag}
\DeclareMathOperator{\rank}{rank}
\newenvironment{breakablealgorithm}
  {% \begin{breakablealgorithm}
   \begin{center}
     \refstepcounter{algorithm}% New algorithm
     \hrule height.8pt depth0pt \kern2pt% \@fs@pre for \@fs@ruled
     \renewcommand{\caption}[2][\relax]{% Make a new \caption
       {\raggedright\textbf{\ALG@name~\thealgorithm} ##2\par}%
       \ifx\relax##1\relax % #1 is \relax
         \addcontentsline{loa}{algorithm}{\protect\numberline{\thealgorithm}##2}%
       \else % #1 is not \relax
         \addcontentsline{loa}{algorithm}{\protect\numberline{\thealgorithm}##1}%
       \fi
       \kern2pt\hrule\kern2pt
     }
  }{% \end{breakablealgorithm}
     \kern2pt\hrule\relax% \@fs@post for \@fs@ruled
   \end{center}
  }
\tikzstyle{alphacontr}=[gray,line width=0.04cm]
\tikzstyle{tensornode}=[fill=turkis-50,draw=none,circle,minimum width = 0.8cm, minimum height = 0.8cm]
\tikzstyle{partialcontraction}=[fill=turkis-25,draw=none,circle,minimum width = 1cm, minimum height = 1cm]
\tikzstyle{tensornodeshadow}=[fill=white,draw=none,circle,minimum width = 0.9cm, minimum height = 0.9cm]
\tikzstyle{widetensornode}=[fill=turkis-50,draw=none,minimum width = 2.4cm, minimum height = 0.8cm, rounded corners = 0.3cm] 
\tikzstyle{verywidetensornode}=[fill=turkis-50,draw=none,minimum width = 4cm, minimum height = 0.8cm, rounded corners = 0.3cm] 
\tikzstyle{widesvnode}=[fill=turkis-50,draw=none,diamond,minimum width = 3cm, minimum height = 1.5cm, rounded corners = 0.5cm] 
\tikzstyle{greytensornode}=[fill=black-50,draw=none,circle,minimum width = 0.8cm, minimum height = 0.8cm]
\tikzstyle{largetensornode}=[fill=turkis-50,draw=none,circle,minimum width = 1.2cm, minimum height = 1.2cm]
\tikzstyle{matrixnode}=[font={\footnotesize},fill=turkis-50,draw=none,regular polygon,regular polygon sides=4,inner sep = 0.1cm,minimum width = 0.6cm, minimum height = 0.6cm]
\tikzstyle{svnode}=[font={\footnotesize},fill=turkis-50,draw=none,diamond,minimum width = 0.6cm, minimum height = 0.6cm]
\tikzstyle{tinynode}=[font={\footnotesize},fill=turkis-50,draw=none,circle,minimum width = 0.3cm, minimum height = 0.3cm]
\tikzstyle{centernode}=[fill=turkis-50,draw,circle,minimum width = 0.8cm, minimum height = 0.8cm]
\tikzstyle{virtnode}=[draw=none]
\tikzstyle{modename}=[midway, above, inner sep=0.05cm, sloped, font={\footnotesize}]
\tikzstyle{modenamebelow}=[midway, below, inner sep=0.05cm, sloped, font={\footnotesize}]
\tikzstyle{network}=[semithick,scale=0.75,transform shape,x=1cm,y=1cm]
\tikzstyle{orthofrom}=[{Latex[length=0.8mm, width=1.45mm]}-,shorten <=-0.1cm]
\tikzstyle{orthoto}=[-{Latex[length=0.8mm, width=1.45mm]},shorten >=-0.1cm]
\tikzstyle{Grouporthofrom}=[{Latex[length=0.8mm, width=1.45mm]}-,shorten <=0.056cm]
\tikzstyle{Grouporthoto}=[-{Latex[length=0.8mm, width=1.45mm]},shorten >=0.056cm]
\tikzstyle{grouporthofrom}=[{Latex[length=0.8mm, width=1.45mm]}-,shorten <=0.168cm]
\tikzstyle{grouporthoto}=[-{Latex[length=0.8mm, width=1.45mm]},shorten >=0.168cm]
\newlength{\nodedist}
\newlength{\shortvirtnodedist}
\newlength{\virtnodedist}
\newlength{\longvirtnodedist}
\newcommand{\IRLS}{{\sc IRLS}}
\newcommand{\AIRLS}{{\sc AIRLS}}
\newcommand{\AandIRLS}{{\sc (A)IRLS}}
\newcommand{\SALSA}{{\sc SALSA}}
\newcommand{\ASRMK}{{ASRM}}
\def\alg{{(\mathrm{alg})}}
\def\rs{{(\mathrm{rs})}}
\newcommand{\COMMENTx}[1]{% for nicer algorithm comments
  \leavevmode {//~#1}}
\def\sgets{:=}
\newcommand{\tikzminipage}[1]{
   \setbox0=\vbox{\hbox{
    \ifuseprecompiled
    \includegraphics{pdf_files/#1.pdf}
    \else
    \tikzsetnextfilename{#1}
    \subimport{tikz_base_files/#1/}{#1.tex}
    \fi}}  
%   \end{center}
%  \end{minipage}
   \begin{minipage}{\the\wd0}
   \begin{center}
%   \parbox{
%   \setbox0=\vbox{\hbox{
    \ifuseprecompiled
    \includegraphics[width=\the\wd0]{pdf_files/#1.pdf}
    \else
    \tikzsetnextfilename{#1}
    \subimport{tikz_base_files/#1/}{#1.tex}
    \fi
  \end{center}
 \end{minipage}
 }
  \newcommand{\tikzfigure}[3]%
 {%
 \begin{figure}[#1]%
  \begin{center}%
    \ifuseprecompiled%
    \includegraphics[width=0.98\textwidth]{pdf_files/#2.pdf}%
    \else%
    \tikzsetnextfilename{#2}%
    \subimport{tikz_base_files/#2/}{#2.tex}%
%     \fi%
  \end{center}%
  \vspace{-0.5cm}%
  \caption{#3}%
\end{figure}\noindent
}
\def\temporarycaptioncommand{}
\def\externaltablecaption#1#2#3{} % palceholder for supplement
\newcommand{\externaltable}[3]%
{%
\begin{table}[#1]%
\input{tables/#2_caption.tex}
\tiny
\begin{center}%
\input{tables/#2.tex}%
\end{center}%
% % \caption{x}
\caption{\temporarycaptioncommand{} -- #3}
% \caption{#3}
\end{table}\noindent
}
\def\barfigurecaption#1#2{} % placeholder for main
\def\buttonfigurecaption#1#2{} % placeholder for main
\newcommand{\pdffigure}[3]%
 {%
 \begin{figure}[#1]%
  \begin{center}%
    \includegraphics[width=0.98\textwidth]{external_pdf_files/#2.pdf}%
  \end{center}%
  \vspace{-0.5cm}%
  \caption{#3}%
\end{figure}\noindent
}
\begin{document}

\begingroup
\begin{abstract}
    Affine sum-of-ranks minimization (ASRM) generalizes the affine rank minimization (ARM) problem from matrices to tensors. Here, the interest lies in the ranks of a family $\mathcal{K}$ of different matricizations. Transferring our priorly discussed results on asymptotic log-det rank minimization, we show that iteratively reweighted least squares with weight strength $p = 0$ remains a, theoretically and practically, particularly viable method denoted as \IRLS{}-$0\mathcal{K}$. As in the matrix case, we prove global convergence of asymptotic minimizers of the log-det sum-of-ranks function to desired solutions. Further, we show local convergence of
\IRLS{}-$0\mathcal{K}$ in dependence of the rate of decline of the therein appearing regularization parameter $\gamma \searrow 0$. For hierarchical families $\mathcal{K}$, we show how an alternating version (\AIRLS{}-$0\mathcal{K}$, related to prior work under the name \SALSA{}) can be evaluated solely through tensor tree network based operations. The method can thereby be applied to high dimensions through the avoidance of exponential computational complexity. Further, the otherwise crucial rank adaption process becomes essentially superfluous even for completion problems. In numerical experiments, we show that the therefor required subspace restrictions and relaxation of the affine constraint cause only a marginal loss of approximation quality. On the other hand, we demonstrate that \IRLS{}-$0\mathcal{K}$ allows to observe the theoretical phase transition also for generic tensor recoverability in practice. Concludingly, we apply \AIRLS{}-$0\mathcal{K}$ to larger scale problems.

\end{abstract}
\maketitle
\textbf{Key words.}
affine rank minimization, iteratively reweighted least square, matrix recovery, matrix completion, log-det function

\smallskip
\textbf{AMS subject classifications.}
15A03, %   	Vector spaces, linear dependence, rank, lineability
15A29, %   	Inverse problems in linear algebra
65J20, %   	Numerical solutions of ill-posed problems in abstract spaces; regularization
90C31, %   	Sensitivity, stability, parametric optimization
90C26%   	Nonconvex programming, global optimization 	
\smallskip
\section{Introduction}\label{sec:intro}
The setting of affine sum-of-ranks minimization (ASRM) is a generalization of the
affine rank minimization (ARM) problem for matrices to tensors. While \textit{the} tensor rank refers to the
minimal number of elementary tensors required for a decomposition into a sum, we are here interested in the ranks of so called matricizations. Let $[d] = \{1,\ldots,d\}$, $d \in \N$, as well as $n_\mu \in \N$, $\mu = 1,\ldots,d$.
For $\emptyset \neq J \subsetneq [d]$ and $J^{\mathsf{c}} := [d] \setminus J$, we define such matricizations (cf. \cite{Gr10_Hie})
\begin{align*}
 (\cdot)^{[J]}: \R^{n_1} \otimes \ldots \otimes \R^{n_d} \rightarrow \R^{n_J \times n_{J^{\mathsf{c}}}}, \quad n_S := \prod_{\mu \in S} n_\mu,
\end{align*}
as the simple reshaping isomorphisms induced via
\begin{align*}
 (v_1 \otimes \ldots \otimes v_d)^{[J]} := \mathrm{vec}(\bigotimes_{j \in J} v_j) \cdot \mathrm{vec}(\bigotimes_{j \in J^{\mathsf{c}}} v_j)^T, \quad v_i \in \R^{n_i},\ i = 1,\ldots,d,
 \end{align*}
 where $\mathrm{vec}(\cdot): \R^{\bigtimes_{\mu \in S} n_\mu} \rightarrow \R^{n_S}$ denotes the vectorization in co-lexicographic (column-wise) order. As usual, 
 we identify $\R^{n_1} \otimes \ldots \otimes \R^{n_d} \cong \R^{n_1 \times \ldots \times n_d}$.
For a (not necessarily hierarchical) family of subsets $\mathcal{K} \subseteq \{ J \subsetneq [d] \mid J \neq \emptyset \}$ 
and a surjective linear operator $\mathcal{L}: \R^{n_1 \times \ldots \times n_d} \rightarrow \R^\ell$, $\ell < n_{[d]}$,
as well as measurements $y \in \mathrm{image}(\mathcal{L})$, we then define \ASRMK{} to refer to the problem of finding
\begin{align}\label{trueobj}
  \underset{X \in \R^{n_1 \times \ldots \times n_d}}{\mathrm{argmin}} \sum_{J \in \mathcal{K}} \mathrm{rank}(X^{[J]}) \quad \mbox{subject to } \mathcal{L}(X) = y.
\end{align}
This setting is not only of particular interest due to its regularizing properties, 
but its close relation to so called hierarchical (or tensor tree) decompositions (cf. \cite{Gr10_Hie,Kr20_Tre}).
We are here however mainly interested in the problem itself, and only secondarily in the possibility
to recover an eventual ground truth tensor from its measurements.
As large parts of this work rely on our preceding article \cite{Kr21_Asy}, which in turn is based on \cite{MoFa12_Ite,FoRaWa11_Low,CaWaBo08_Enh,DaDeFoGu10_Ite}, we strongly recommend to take notice of such.
\subsection{Approaches to \ASRMK{} and tensor recovery}
Affine rank minimization (ARM), as theoretical origin of \ASRMK{}, is included as such for the dimension $d = 2$ and $\mathcal{K} = \{ \{1\} \}$ \cite{Kr21_Asy}, and consequently defined as the problem to find a matrix 
\begin{align*}
  X^\ast \in \underset{X \in \R^{n \times m}}{\mathrm{argmin}}\ \mathrm{rank}(X) \quad \mbox{subject to } \mathcal{L}(X) = y.
\end{align*}
This setting in turn is based on the affine cardinality minimization problem (ACM), that is to find a vector
\begin{align*}
 x^\ast \in \underset{x \in \R^n}{\mathrm{argmin}} \ \mathrm{card}(x) \quad \mbox{subject to} \quad \mathcal{L}(x) = y.
\end{align*}
A short overview over recovery methods as well as the role of iteratively reweighted least squares (\IRLS{}, cf. \cite{MoFa12_Ite,FoRaWa11_Low} for ARM and \cite{CaWaBo08_Enh,DaDeFoGu10_Ite} for ACM) for these two problems can be found in our preceding article \cite{Kr21_Asy}.
To the best of our knowledge, \IRLS{} has only priorly been considered with regard to the \ASRMK{} problem for tensors in the thesis \cite{Kr20_Tre}, from which also the related, so called stable ALS approximation algorithm \cite{Kr19_Sta} stems. Relaxations of \ASRMK{} itself however have been considered before, including the minimization of the sum of nuclear norms \cite{GaReYa11_Ten,LiSh13_AnE,SiTrLaSu14_Lea}. The tensor rank as outlined in the introduction, \cref{sec:intro}, however, is hard to calculate, and usually not the direct target of minimization. Though \cite{SoSiLa19_Can,SoLa19_Fib} utilize the canonical polyadic decomposition to a certain fiber completion problem.
Other algorithm rely on the explicit, separate adaption of unknown ranks such as low rank manifold \cite{St16_Rie,KrStVa14_Low,SiHe15_Opt} or a-priorly representation or subspace based optimization \cite{HaNoPe21_Act,GrNoCh19_Lea,RaScSt15_Ten}. 
However, non-intrusive rank adaption schemes, even if elaborate, tend to be problematic \cite{Kr19_Sta}.
The \AIRLS{} related method presented therein, as well as \cite{GoGoRoSwKuEi20_Ten,BaEiSaTr21_Pri,GoScTr21_Abl} based thereon, contrarily consider an intrusive regularization related to reweighting that circumvents the instability and overfitting problems otherwise caused.
Another class of algorithm requires to choose specific sampling points, prominently cross approximation based methods \cite{BaGrKl13_Bla,OsTy10_TTc,KaOsBu20_Ten}. Though such are preferable in that setting, we here however assume the affine measurement operator to be a priorly given.

\subsection{Contributions and organization of this paper} 

The novel aspects of this paper are organized as follows.
\begin{itemize}
 \item In \cref{sec:asymmin,sec:irls}, we generalize the optimization as well as reweighting process from the matrix to the tensor case in an introductory manner. \Cref{sec:datasparseintro}
 contains a preliminary description of hierarchical decompositions and the thereto related data sparse optimization.
 \item In \cref{sec:understr}, we interpret the tensor log-det approach as successive minimization scheme and thereby prove the convergence of global optima to the desired solution, as analogously done for the matrix case \cite{Kr21_Asy}.
 \item \Cref{sec:logdetirls} provides global convergence results for the adjusted tensor \IRLS{}-$0\mathcal{K}$ algorithm with respect to sequences of complementary weights, under consideration of the rate of decline of the regularization parameter $\gamma$.
 \item In \cref{sec:tensor}, we discuss the relaxation of the affine constraint together with the restriction to iteratively defined sequences of admissible subspaces.
 \item \Cref{sec:reloptHT} concisely reintroduces hierarchical formats as non-rooted tree tensor networks with an emphasis on its graph theoretical foundation. It contains several fundamental statements required for the subsequently introduced A(lternating)\IRLS{}-$0\mathcal{K}$ algorithm.
 \item In \cref{AIRLS}, we utilize tree tensor networks to derive the \AIRLS{}-$0\mathcal{K}$ algorithm which allows a non-exponentially scaling realization of the relaxed \IRLS{}-$0\mathcal{K}$ method introduced in \cref{sec:tensor} through an evaluation within given low rank representations. 
 \item \Cref{sec:numexp} contains a comprehensive series of numerical experiments. Firstly, we
 demonstrate that \IRLS{}-$0\mathcal{K}$ allows to observe the theoretical phase transition \cite{BrGeMiVa21_Alg} regarding the required number of measurements for recoveries. Secondly, we follow the relaxations laid out in this work made from \IRLS{}-$0\mathcal{K}$ up to the \AIRLS{}-$0\mathcal{K}$ approach. We demonstrate the improvement, but likewise common ground towards our priorly introduced, so called SALSA algorithm \cite{Kr19_Sta}, as well as superiority over conventional ALS. We conclude with an application of \AIRLS{}-$0\mathcal{K}$ to large scale problems in higher dimensions. 
 \item \Cref{sec:minorproofs} contains a postponed proof.
 The supplementary \cref{altasrm} includes a further numerical experiment.
 \Cref{sec:sens} contains extended visualization of results as explained in
 \cref{sec:visres}. Technical proofs concerning branch evaluations and therefor partially necessary notation can be found in \cref{technicalproof,app:sec:reloptHT}.  
 The \AIRLS{}-$0\mathcal{K}$ method is summarized in \cref{AIRLS-0K},
 whereas \cref{sec:practheurasp} discusses viable heuristics.
\end{itemize}

\subsection{Asymptotic minimization}\label{sec:asymmin}
We have priorly discussed in \cite{Kr21_Asy} as based on \cite{MoFa12_Ite,FoRaWa11_Low,CaWaBo08_Enh,DaDeFoGu10_Ite} in which way the ARM problem for matrices can be
approached via the asymptotic minimization (cf. \cref{Xastdef}) of the family
\begin{align} \label{matrixsur}
 f_\gamma(A) := \log \prod_{i = 1}^{k_1} (\sigma_i^2(A) + \gamma) = \log \det(A A^T + \gamma I), \quad \gamma \searrow 0,
\end{align}
for which $\sigma_i(A)$, $i = 1,\ldots,r$, are defined as the singular values of $A \in \R^{k_1 \times k_2}$
and $\sigma_i(A) = 0$, $i > r$, $r = \mathrm{rank}(A)$. Plainly analogous,
its tensor version for the minimization of a sum of ranks is defined as (see \cref{sec:understr})
\begin{align}\label{tensorf0}
 f^{\mathcal{K}}_{\gamma}(X) := \sum_{J \in \mathcal{K}} f_{\gamma}(X^{[J]}) 
 = \log \prod_{J \in \mathcal{K}} \prod_{i=1}^{n_J} (\sigma_i^{(J)}(X)^2 + \gamma),
\end{align}
where $\sigma_i^{(J)}(X) = \sigma_i(X^{[J]})$ is the $i$-th singular value of the matrix $X^{[J]} \in \R^{n_J \times n_{J^{\mathsf{c}}}}$.
Thus the matrix version corresponds to $\mathcal{K} = \{\{1\}\}$, whereas for the alternating \IRLS{} method, 
we have also considered the complementary $\mathcal{K} = \{\{2\}\}$. In \cite{Kr21_Asy}, we have 
already reasoned the choice $p = 0$ of the therein appearing weight strength parameter $p \in [0,1]$. Thus, we here only regard\footnote{Most formulas are however easily adaptable to $p \in [0,1]$.}
the thereto corresponding log-det approach laid out above, as opposed to the
other extreme $p = 1$ associated to nuclear norm minimization. 
This leads us to the following, potential solutions to the \ASRMK{} problem.
\begin{definition} \label{Xastdef}
We define
 \begin{align*}
 \mathcal{X}^{\ast} := \{ X^\ast \mid \exists (X_\gamma)_{\gamma > 0} \subset \mathcal{L}^{-1}(y), \ X^\ast = \lim_{\gamma \searrow 0} X_\gamma, \ f^{\mathcal{K}}_{\gamma}(X_\gamma) = \min_{X \in \mathcal{L}^{-1}(y)} f^{\mathcal{K}}_{\gamma}(X)\}.
\end{align*}
\end{definition}
This set of asymptotic, global minimizers indeed yields the desired solutions as we prove in \cref{convergencelemmatensor}. The decline of the parameter $\gamma$ is no less important here as more detailly remarked
on in the predecessor \cite{Kr21_Asy}. It should further be noted that neither the ranks $r^{(J)} := \mathrm{rank}(X^{[J]})$ (cf. \cref{sec:understr}),
nor the families of singular values $\sigma^{(J)}$, $J \in \mathcal{K}$, 
are independent of each other \cite{Kr19_The}, though not prohibitively so in regard of aboves approach.

\subsection{Iteratively reweighted least squares (\texorpdfstring{\IRLS{}}{IRLS})}\label{sec:irls}
In line with the overall generalization, also iteratively reweighted least squares (\IRLS{}) allows to be
applied to the minimization of a sum of ranks of a tensor.
For the matrix case, one version (cf. \cite{Kr21_Asy,MoFa12_Ite}) defines ($\|\cdot\|_F$ being the Frobenius norm)
\begin{align*}
 X^{(i)} := \underset{X \in \mathcal{L}^{-1}(y)}{\mathrm{argmin}} \ \|W_{\gamma^{(i-1)},X^{(i-1)}}^{1/2} X\|_F, \quad W_{\gamma,X} := (X X^T + \gamma I)^{-1},
\end{align*}
for a monotonically decreasing sequence $\{\gamma^{(i)}\}_{i \geq 0} \subset \R_{>0}$.
The tensor variant straightforwardly is given by (see \cref{sec:logdetirls})
\begin{align}\label{IRLS-0K}
 X^{(i)} & := 
 \underset{X \in \mathcal{L}^{-1}(y)}{\mathrm{argmin}} \sum_{J \in \mathcal{K}} \big\|(W^{(J)}_{\gamma^{(i-1)},X^{(i-1)}})^{1/2} (X^{(i)})^{[J]} \big\|_F^2,
 \end{align}
where the weight matrices\footnote{Though certainly interrelated, such are not matricizations of some common tensor.} follow the same generalization with
 \begin{align*}
 W^{(J)}_{\gamma,X} & := W_{\gamma,X^{[J]}} = \big(X^{[J]} (X^{[J]})^T  + \gamma I \big)^{-1}, \quad J \in \mathcal{K}.
\end{align*}
Continued from the vector as well as matrix case, it also here holds true that for
a sequence $X_\gamma \rightarrow \overline{X}$ with \textit{sufficiently fast} declining singular values
\begin{align*}
 \sum_{J \in \mathcal{K}} \big\|W^{(J)}_{\gamma,X_\gamma} X^{[J]}_\gamma \big\|^2_F = 
 \sum_{J \in \mathcal{K}} \sum_{i = 1}^{n_J}
 \frac{\sigma^{(J)}_{i} (X_\gamma)^2}{\sigma^{(J)}_i(X_\gamma)^2 + \gamma} \underset{\gamma \searrow 0}{\longrightarrow} \sum_{J \in \mathcal{K}} \mathrm{rank}(\overline{X}^{[J]}).
\end{align*}
Though largely similar to the matrix case, there is however at least one difference as we discuss in \cref{sec:understr}. Due to its dependence
on $p = 0$ and the family $\mathcal{K}$, we abbreviate aboves algorithm \cref{IRLS-0K} as \IRLS{}-$0\mathcal{K}$.

\subsection{Data sparse optimization}\label{sec:datasparseintro}
With increasing dimensions $d$, the size of the space $\R^{n_1 \times \ldots \times n_d}$
quickly becomes prohibitively large. While for smaller instances, \IRLS{}-$0\mathcal{K}$ is by all
means a viable algorithm, it otherwise remains a theoretical ideal. However, for hierarchical families $\mathcal{K}$,
that is if
 \begin{align}\label{eq:hiercond}
  (J \subset S \quad \vee \quad S \subset J \quad \vee \quad J \cap S = \emptyset) \quad \wedge \quad J \neq S^{\mathsf{c}}, \qquad \forall J, S \in \mathcal{K},
 \end{align}
so called hierarchical decompositions \cite{Gr10_Hie} or, basically synonymously, tensor tree networks (cf. \cite{FaHaNo18_Tre,Kr20_Tre})
provide remedy in the same way the ordinary low rank matrix decomposition does (cf. \cite{Kr21_Asy}). In the
latter case, the data space $\mathcal{D}_r := \{ (Y,Z) \mid Y \in \R^{k_1 \times r},\ Z \in \R^{r \times k_2} \}$
represents the low rank variety 
\begin{align*}
 V^{k_1,k_2}_{\leq r} := \{ A \in \R^{k_1 \times k_2} \mid \mathrm{rank}(A) \leq r \}
\end{align*}
via the surjective (but not injective) bilinear map 
\begin{align} \label{lowrankdec}
 \tau_r: \mathcal{D}_r \rightarrow V_{\leq r}, \quad \tau_r(Y,Z):=Y Z \in \R^{n_1 \times n_2}.
\end{align}
The alternating method \AIRLS{} then only requires to operate on $\mathcal{D}_r$, while directly minimizing $f_\gamma$ subject to relaxed affine constraints (see \cref{sec:tensor}).
In the tensor case, where the rank becomes $r = \{r^{(J)}\}_{J \in \mathcal{K}} \in \N^{\mathcal{K}}$,
the variety
\begin{align} \label{HTtensors}
 V^\mathcal{K}_{\leq r} := \{ X \in \R^{n_1 \times \ldots \times n_d} \mid X^{[J]} \in V^{n_J,n_{J^{\mathsf{c}}}}_{\leq r^{(J)}}, \ J \in \mathcal{K} \},
\end{align}
has a logarithmicly lower dimension and is likewise represented by a data space $\mathcal{D}_r$ together with a simple, 
surjective and multilinear contraction map $\tau_r: \mathcal{D}_r \rightarrow V^\mathcal{K}_{\leq r}$ (see \cref{sec:reloptHT}). Thereby, 
a sparse optimization as for matrices is also possible in higher dimensions.
Ultimately, also \AIRLS{}-$0\mathcal{K}$ (see \cref{AIRLS}) distinguishes itself from well known \textit{unregularized} alternating least squares (ALS)  \cite{HoRowSc12_The}
only through an additional penalty term. However, it thereby not only becomes stable by means of \cite{Kr19_Sta},
but it is derived from and directly minimizes the objective function $f^{\mathcal{K}}_\gamma$ restricted to $V^\mathcal{K}_{\leq r}$.

\section{Underlying structure and global behavior}\label{sec:understr}
Phrased more generalized, we in principle desire to solve the problem (cf. \cite{Kr21_Asy}) of finding
\begin{align}\label{Xastgeneral}
 X^\ast \in \underset{X \in \mathcal{L}^{-1}(y)}{\mathrm{argmin}}\ \mathcal{C}_{\mathcal{V}}(X), \quad \mathcal{C}_{\mathcal{V}}(v) := \min_{V \in \mathcal{V}:\ X \in V} \mathrm{dim}(V),
\end{align}
where in this setting the family of varieties $\mathcal{V}$ is
\begin{align*}
\mathcal{V}_d^{\mathcal{K}} := \{ V^{\mathcal{K}}_{\leq r} \subset \R^{n_1 \times \ldots \times n_d} \mid r = \{r^{(J)}\}_{J \in \mathcal{K}} \in \N_0^{\mathcal{K}} \},
\end{align*}
for $V^{\mathcal{K}}_{\leq r}$ as defined in \cref{HTtensors}.
In general however, the dimension of $V^\mathcal{K}_{\leq r}$
does not equal $\sum_{J \in \mathcal{K}} r^{(J)}$, and is thus not directly represented by the sum of ranks as in \cref{trueobj}.
While
\begin{align*}
V^\mathcal{K}_{\leq \widetilde{r}} \subsetneq V^\mathcal{K}_{\leq r} \quad \Rightarrow \quad \widetilde{r}^{(J)} \leq r^{(J)},\ J \in \mathcal{K},\ \widetilde{r} \neq r \quad \Rightarrow \quad \dim(\mathcal{V}^\mathcal{K}_{\leq \widetilde{r}}) < \dim(\mathcal{V}^\mathcal{K}_{\leq r}),
\end{align*}
neither of the converse implications holds true in general. 
Firstly, some differently indexed varieties are equal since some constellations $r \in \N_0^{\mathcal{K}}$ are \textit{unfeasible} \cite{Kr20_Tre}.
\begin{definition}\label{def:feasible}
 The values $r = \{r^{(J)}\}_{J \in \mathcal{K}}$ are called (un)feasible (for $n \in \N^d$), if there 
 exists (not) at least one tensor $X \in \R^{n_1 \times \ldots \times n_d}$ 
 with $\mathrm{rank}(X^{[J]}) = r^{(J)}$, $J \in \mathcal{K}$.
\end{definition}
For hierarchical sets $\mathcal{K}$, these bounds are (cf. \cite{Kr20_Tre})
  $r^{(J_{\hat{e}})} \leq n_v \prod_{e \in E_v \setminus \{\hat{e}\}} r^{(J_e)}$ for $\hat{e} \in E_v$, $v \in V$.
This natural interrelation of ranks is somewhat beneficial to the simplified sum-of-ranks approach as it excludes some extremal cases. The sum-of-ranks minimization is itself a necessary
relaxation of the (arguably) more desirable objective function $\mathcal{C}_{\mathcal{V}^\mathcal{K}_d}$, yet
it is closer than it might first seem.
What remains however is that, contrarily to the matrix case, the varieties are only partially nested.

\subsection{Determinant expansion and convergence of (global) minimizers}\label{detexptensor}
Following from the matrix case, one can likewise expand the function $f_\gamma^\mathcal{K}$
into squared sums of minors defined as
\begin{align*}
  \mathrm{det}^2_k(A) & := \sum_{I \in \mathcal{P}_k([n_J])} \sum_{J \in \mathcal{P}_k([n_{J^{\mathsf{c}}}]) } \det(A_{I,J})^2, \quad k = 1,\ldots,n_J,
\end{align*}
for $A_{I,J} := \{A_{i,j}\}_{i \in I,j \in J} \in \R^{|I| \times |J|}$ and $\mathcal{P}_k([\ell]) := \{ I \subseteq \{1,\ldots,\ell\} \mid |I| = k\}$.
For simplicity of notation, we further define $\mathrm{det}_0^2(A) := 1$.
\begin{corollary}\label{cor:fgKexp}
Let $X \in \R^{n_1 \times \ldots \times n_d}$ and $\gamma \geq 0$. Then
 \begin{align*}
 \prod_{J \in \mathcal{K}} \prod_{i=1}^{n_J} (\sigma_i^{(J)}(X)^2 + \gamma)
 = \prod_{J \in \mathcal{K}} \sum_{{k^J} = 0}^{n_J} \gamma^{(n_J - {k^J})} \cdot \mathrm{det}_{{k^J}}^2(X^{[J]}) 
  & = \sum_{s = 0}^{\sum_{J \in \mathcal{K}} n_J} \gamma^{\sum_{J \in \mathcal{K}} n_J - s} g_s(X)
\end{align*}
with
\begin{align}
 g_s(X) := \sum_{\{k^J\}_{J \in \mathcal{K}} \in \Xi_s} \prod_{J \in \mathcal{K}} \mathrm{det}_{k^J}^2(X^{[J]}), \label{eq:g_ktensor}
\end{align}
for $\Xi_s := \{ \{k^J\}_{J \in \mathcal{K}} \mid \ 0 \leq k^J \leq n_J,\ J \in \mathcal{K},\ \sum_{J \in \mathcal{K}} k^J = s \}$.
\end{corollary}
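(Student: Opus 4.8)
The plan is to derive the statement as a corollary of the single-matrix determinant expansion already established for the matrix case in \cite{Kr21_Asy}, applied separately to each matricization $X^{[J]}$, followed by a purely formal regrouping of a product of univariate polynomials in $\gamma$.

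First I would fix $J \in \mathcal{K}$ and set $A := X^{[J]} \in \R^{n_J \times n_{J^{\mathsf{c}}}}$. Since the (padded) eigenvalues of the symmetric matrix $A A^T \in \R^{n_J \times n_J}$ are precisely $\sigma_1^{(J)}(X)^2, \ldots, \sigma_{n_J}^{(J)}(X)^2$, we have $\prod_{i=1}^{n_J}(\sigma_i^{(J)}(X)^2 + \gamma) = \det(A A^T + \gamma I)$. Expanding this determinant in powers of $\gamma$ yields $\sum_{k=0}^{n_J} \gamma^{n_J - k} e_k$, where $e_k$ is the $k$-th elementary symmetric polynomial of the eigenvalues of $A A^T$, equivalently the sum of all $k \times k$ principal minors of $A A^T$. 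For a row index set $R \in \mathcal{P}_k([n_J])$, the principal submatrix satisfies $(A A^T)_{R,R} = A_{R,\cdot}(A_{R,\cdot})^T$, so the Cauchy--Binet formula gives $\det((A A^T)_{R,R}) = \sum_{C \in \mathcal{P}_k([n_{J^{\mathsf{c}}}])} \det(A_{R,C})^2$. Summing over $R$ reproduces exactly $\mathrm{det}_k^2(A)$, hence $\prod_{i=1}^{n_J}(\sigma_i^{(J)}(X)^2 + \gamma) = \sum_{k=0}^{n_J} \gamma^{n_J - k}\, \mathrm{det}_k^2(X^{[J]})$. Taking the product over all $J \in \mathcal{K}$ gives the first asserted equality.

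For the second equality I would simply expand the product $\prod_{J \in \mathcal{K}} \big(\sum_{k^J=0}^{n_J} \gamma^{n_J - k^J} \mathrm{det}_{k^J}^2(X^{[J]})\big)$ by distributivity. Each resulting monomial corresponds to a choice $\{k^J\}_{J \in \mathcal{K}}$ with $0 \le k^J \le n_J$ and contributes $\gamma^{\sum_{J}(n_J - k^J)} \prod_{J} \mathrm{det}_{k^J}^2(X^{[J]})$. Writing $N := \sum_{J \in \mathcal{K}} n_J$ and $s := \sum_{J \in \mathcal{K}} k^J$, the $\gamma$-exponent equals $N - s$, so collecting all monomials of a fixed total degree $s$ yields precisely $\gamma^{N-s} g_s(X)$ with $g_s$ summed over $\Xi_s$, as claimed.

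I do not expect a genuine obstacle here, since the only substantive input is the matrix-case expansion (the Cauchy--Binet identity for $e_k$), which is available by reference; the remainder is bookkeeping. The only points deserving care are the padding convention $\sigma_i^{(J)}(X) = 0$ for $i > r^{(J)}$, which makes $\det(A A^T + \gamma I) = \prod_{i=1}^{n_J}(\sigma_i^{(J)}(X)^2 + \gamma)$ literally correct for all $n_J$; the convention $\mathrm{det}_0^2(A) := 1$, ensuring the constant-in-$\gamma$ term (and the degenerate case $\gamma = 0$) is handled consistently; and the verification that the index sets $\Xi_s$ partition the expansion, so that no monomial is counted twice or omitted.
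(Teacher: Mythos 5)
Your proposal is correct and takes essentially the same route as the paper: the first equality is the matrix-case expansion of $\det(X^{[J]}(X^{[J]})^T + \gamma I)$ applied to each matricization separately (which the paper simply cites from its predecessor \cite{Kr21_Asy}, while you fill in the Cauchy--Binet derivation), and the second equality is the same distributive regrouping of the product of polynomials in $\gamma$ by total degree $s$. No gaps.
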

\begin{proof}
 As $\prod_{i=1}^{n_J} (\sigma_i^{(J)}(X)^2 + \gamma) = \det(X^{[J]} (X^{[J]})^T + \gamma I)$,
 the first equality follows by \cite{Kr21_Asy}. The third term is merely a restructured version.
\end{proof}
The minimizers of these functions are nested in the sense of the following Lemma.
\begin{lemma}\label{nestedmin}
 For $g_s(X)$, $s = 0,\ldots,\sum_{J \in \mathcal{K}} n_J$, as in \cref{cor:fgKexp}, we have
 \begin{align*}
 g_s(X) = 0  \quad \Leftrightarrow \quad \sum_{J \in \mathcal{K}} \mathrm{rank}(X^{[J]}) < s
 \end{align*}
 for all $X \in \R^{n_1 \times \ldots \times n_d}$. 
\end{lemma}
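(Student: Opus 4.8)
The plan is to reduce the tensor statement to the single-matrix fact that a sum of squared $k \times k$ minors vanishes exactly when the rank drops below $k$. Concretely, for any matrix $A$ one has $\mathrm{det}^2_k(A) = 0 \Leftrightarrow \mathrm{rank}(A) < k$: if $\mathrm{rank}(A) < k$ then every $k \times k$ submatrix is rank-deficient, so all of its minors vanish; if instead $\mathrm{rank}(A) \geq k$, some $k \times k$ submatrix is invertible and contributes a strictly positive square. This is precisely the matrix instance ($\mathcal{K} = \{\{1\}\}$) already established in \cite{Kr21_Asy}, and I would simply invoke it.

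Writing $r^{(J)} := \mathrm{rank}(X^{[J]})$, the first observation is that each $\mathrm{det}^2_{k^J}(X^{[J]})$ is a sum of squares, hence nonnegative, so every product $\prod_{J \in \mathcal{K}} \mathrm{det}_{k^J}^2(X^{[J]})$ is nonnegative and $g_s(X)$ is a sum of nonnegative terms. Therefore $g_s(X) = 0$ if and only if every summand vanishes. A single summand indexed by $\{k^J\} \in \Xi_s$ vanishes iff at least one of its factors does, i.e. iff $k^J > r^{(J)}$ for some $J \in \mathcal{K}$, by the matrix fact above. Equivalently, $g_s(X) \neq 0$ iff there exists a tuple $\{k^J\} \in \Xi_s$ with $k^J \leq r^{(J)}$ for all $J \in \mathcal{K}$.

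It then remains to decide when such a tuple exists. Since $r^{(J)} \leq n_J$, the box constraint $0 \leq k^J \leq n_J$ built into $\Xi_s$ is automatically satisfied once $k^J \leq r^{(J)}$, so the only binding requirement is $\sum_{J \in \mathcal{K}} k^J = s$ subject to $0 \leq k^J \leq r^{(J)}$. Over the integers, the attainable values of $\sum_{J \in \mathcal{K}} k^J$ under these bounds are exactly $\{0, 1, \ldots, \sum_{J \in \mathcal{K}} r^{(J)}\}$, so a feasible tuple exists iff $s \leq \sum_{J \in \mathcal{K}} r^{(J)}$. Negating yields $g_s(X) = 0 \Leftrightarrow s > \sum_{J \in \mathcal{K}} r^{(J)} \Leftrightarrow \sum_{J \in \mathcal{K}} \mathrm{rank}(X^{[J]}) < s$, as claimed.

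The argument is essentially bookkeeping once the matrix-case minor fact is in hand; the only place requiring (elementary) care is the integer-realizability step, namely confirming that every intermediate value between $0$ and $\sum_{J \in \mathcal{K}} r^{(J)}$ is achievable under the upper bounds $k^J \leq r^{(J)}$. I expect no genuine obstacle there, and the reduction to $\mathcal{K} = \{\{1\}\}$ isolates all of the analytic content into the already-known matrix statement, leaving only the nonnegativity and the combinatorial count as the tensor-specific additions.
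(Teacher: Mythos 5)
Your proof is correct and follows essentially the same route as the paper's: the paper's two-line proof compresses exactly your chain of observations (nonnegativity of the squared-minor sums, the matrix fact $\mathrm{det}^2_k(A) \neq 0 \Leftrightarrow \mathrm{rank}(A) \geq k$, and the equivalence of the existence of a tuple $\{k^J\} \in \Xi_s$ with $k^J \leq \mathrm{rank}(X^{[J]})$ to the inequality $s \leq \sum_{J \in \mathcal{K}} \mathrm{rank}(X^{[J]})$) into a single displayed equivalence. Your version merely makes explicit the integer-realizability step and the reduction to the known matrix case, which the paper leaves implicit.
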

\begin{proof}
By definition of $g_s(X)$, we have
 \begin{align*}
 g_s(X) \neq 0 \quad & \Leftrightarrow \quad \exists \{k^J\}_{J \in \mathcal{K}} : \sum_{J \in \mathcal{K}} k^J = s \ \forall J \in \mathcal{K} : \mathrm{rank}(X^{[J]}) \geq k^J \\
 \quad & \Leftrightarrow \quad \sum_{J \in \mathcal{K}} \mathrm{rank}(X^{[J]}) \geq s.
\end{align*}
\end{proof}
By \cref{nestedmin}, it directly follows that each $g_s(X) = 0$ implies $g_{s+1}(X) = 0$.
With this structure, we can apply the nested minimization scheme as in \cite{Kr21_Asy} to conclude the following \cref{convergencelemmatensor}.
\begin{theorem} \label{convergencelemmatensor}
Let
\begin{align*}
 s^\ast = \min_{X \in \mathcal{L}^{-1}(y)} \sum_{J \in \mathcal{K}} \rank(X^{[J]}).
 \end{align*}
 Then for any convergent sequence of (global) minimizers $X_\gamma$ of $f^{\mathcal{K}}_{\gamma}(X)$ subject to $\mathcal{L}(X) = y$, we have 
 \begin{align*}
  X^\ast := \lim_{\gamma \rightarrow 0} X_\gamma \in \underset{X \in \mathcal{L}^{-1}(y), \ \sum_{J \in \mathcal{K}} \rank(X^{[J]}) = s^\ast}{\mathrm{argmin}}\ \prod_{J \in \mathcal{K}} \prod_{i = 1}^{\mathrm{rank}(X^{[J]})} \sigma_i^{(J)}(X)
 \end{align*}
with
\begin{align}
 \sigma^{(J)}_{\mathrm{rank}((X^\ast)^{[J]})+1}(X_\gamma)^2 \in \mathcal{O}(\gamma), \quad J \in \mathcal{K}. \label{sigmajboundtensor}
\end{align}
If there is only one $X_{s^\ast} \in \mathcal{L}^{-1}(y)$ with $\sum_{J \in \mathcal{K}} \mathrm{rank}(X_{s^\ast}^{[J]}) = s^\ast$, then $X_\gamma \rightarrow X_{s^\ast}$.
\end{theorem}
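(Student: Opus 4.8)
The plan is to transfer the successive (nested) minimization scheme of \cite{Kr21_Asy} to the tensor setting, driven by the determinant expansion of \cref{cor:fgKexp} and the nesting of \cref{nestedmin}. Since $\log$ is strictly monotone, minimizing $f^{\mathcal{K}}_\gamma$ over $\mathcal{L}^{-1}(y)$ coincides with minimizing the polynomial $P_\gamma(X) := \prod_{J \in \mathcal{K}} \prod_{i=1}^{n_J}(\sigma_i^{(J)}(X)^2 + \gamma) = \sum_{s=0}^{N} \gamma^{N-s} g_s(X)$, where $N := \sum_{J \in \mathcal{K}} n_J$. Each $g_s$ is a polynomial in the entries of $X$, hence continuous, and is a sum of squares, hence nonnegative; by \cref{nestedmin} one has $g_s(X) > 0$ if and only if $R(X) := \sum_{J} \rank(X^{[J]}) \geq s$. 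The leading term of $P_\gamma(X)$ as $\gamma \searrow 0$ is therefore $\gamma^{N-R(X)} g_{R(X)}(X)$ with strictly positive coefficient. I fix some $X_{s^\ast} \in \mathcal{L}^{-1}(y)$ with $R(X_{s^\ast}) = s^\ast$, note $X^\ast \in \mathcal{L}^{-1}(y)$ by continuity of $\mathcal{L}$, and hence $R(X^\ast) \geq s^\ast$.

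First I would pin down the leading order by showing $R(X^\ast) = s^\ast$, equivalently $g_{s^\ast+1}(X^\ast) = 0$. Assuming instead $g_{s^\ast+1}(X^\ast) = c > 0$, continuity gives $g_{s^\ast+1}(X_\gamma) \geq c/2$ for small $\gamma$, so retaining only that nonnegative term yields $P_\gamma(X_\gamma) \geq (c/2)\,\gamma^{N-s^\ast-1}$. This contradicts the optimality bound $P_\gamma(X_\gamma) \leq P_\gamma(X_{s^\ast}) = \mathcal{O}(\gamma^{N-s^\ast})$ after dividing by $\gamma^{N-s^\ast-1}$. With $R(X^\ast) = s^\ast$ established, I would identify the second-order coefficient on the minimal stratum: for any $X$ with $R(X) = s^\ast$ the only multi-index $\{k^J\} \in \Xi_{s^\ast}$ contributing to $g_{s^\ast}(X)$ is $k^J = r^{(J)} := \rank(X^{[J]})$, so $g_{s^\ast}(X) = \prod_J \mathrm{det}_{r^{(J)}}^2(X^{[J]})$. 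Since $\mathrm{det}_k^2$ equals the $k$-th elementary symmetric polynomial in the squared singular values (read off from the single-factor identity of \cref{cor:fgKexp}), taking $k = r^{(J)}$ leaves only the nonzero values, giving $\mathrm{det}_{r^{(J)}}^2(X^{[J]}) = \prod_{i=1}^{r^{(J)}} \sigma_i^{(J)}(X)^2$ and hence $g_{s^\ast}(X) = \big(\prod_J \prod_{i=1}^{r^{(J)}} \sigma_i^{(J)}(X)\big)^2$.

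Minimality of the product then follows by sandwiching $P_\gamma(X_\gamma)/\gamma^{N-s^\ast}$. Discarding the nonnegative terms with $s > s^\ast$ and letting the terms with $s < s^\ast$ vanish gives $\liminf_{\gamma \searrow 0} P_\gamma(X_\gamma)/\gamma^{N-s^\ast} \geq g_{s^\ast}(X^\ast)$, while optimality against an arbitrary competitor $X_0 \in \mathcal{L}^{-1}(y)$ with $R(X_0) = s^\ast$ gives $\limsup_{\gamma \searrow 0} P_\gamma(X_\gamma)/\gamma^{N-s^\ast} \leq \lim_{\gamma \searrow 0} P_\gamma(X_0)/\gamma^{N-s^\ast} = g_{s^\ast}(X_0)$. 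Thus $g_{s^\ast}(X^\ast) \leq g_{s^\ast}(X_0)$, which by the factorization above is exactly minimality of $\prod_J \prod_{i=1}^{r^{(J)}} \sigma_i^{(J)}$ over the stratum. The uniqueness addendum is then immediate, since the limit lies in $\{X \in \mathcal{L}^{-1}(y) : R(X) = s^\ast\}$, a singleton under the hypothesis.

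For the decay rate \cref{sigmajboundtensor} I would reuse $P_\gamma(X_\gamma) \leq C\gamma^{N-s^\ast}$ together with a factor-wise lower estimate. Because $\prod_{i=1}^{r^{(J)}} \sigma_i^{(J)}(X_\gamma)^2 \to \prod_{i=1}^{r^{(J)}} \sigma_i^{(J)}(X^\ast)^2 > 0$, this product is bounded below by some $c_J > 0$ for small $\gamma$; bounding the trailing $n_J - r^{(J)} - 1$ factors of $\prod_{i=1}^{n_J}(\sigma_i^{(J)}(X_\gamma)^2 + \gamma)$ below by $\gamma$ each while keeping the $(r^{(J)}+1)$-th factor yields, after multiplying over $J$ and dividing out $\gamma^{N-s^\ast-|\mathcal{K}|}$, the inequality $\prod_J (\sigma_{r^{(J)}+1}^{(J)}(X_\gamma)^2 + \gamma) \leq C'\gamma^{|\mathcal{K}|}$. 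Since each of the $|\mathcal{K}|$ factors is at least $\gamma$, isolating any single factor forces $\sigma_{r^{(J)}+1}^{(J)}(X_\gamma)^2 \in \mathcal{O}(\gamma)$. The main obstacle throughout is the bookkeeping of orders in $\gamma$: one must consistently control the higher-order ($s > s^\ast$) contributions in $P_\gamma(X_\gamma)$ through their nonnegativity rather than by naively substituting the limit $X^\ast$, and track the leading coefficient precisely across the two comparison inequalities.
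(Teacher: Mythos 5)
Your proposal is correct and takes essentially the same route as the paper: the paper's proof consists of invoking the nesting property of \cref{nestedmin} and declaring the argument "analogous to the matrix case" of the predecessor, which is precisely the determinant-expansion and $\gamma$-order-matching scheme (via \cref{cor:fgKexp}) that you carry out explicitly. In effect, your write-up supplies the sandwich argument and the decay-rate estimate that the paper delegates to the cited matrix-case proof.
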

\begin{proof}
Since 
$
\mathrm{argmin}_{X \in \mathcal{L}^{-1}} \ g_s(X) \subset \mathrm{argmin}_{X \in \mathcal{L}^{-1}} \ g_{s+1}(X)
$
due to \cref{nestedmin},
the proof is analogous to the corresponding one in \cite{Kr21_Asy}.
\end{proof}
\section{Log-det tensor iteratively reweighted least squares (\texorpdfstring{\IRLS{}-$0\mathcal{K}$}{IRLS-0K})}\label{sec:logdetirls}
Although the global minimizers of $f_\gamma^\mathcal{K}$ yield the sought solution,
it is not practicable to directly minimize these functions or to find its extremal points. As in the matrix case, the map is augmented.
While one here requires to introduce one weight for each $J \in \mathcal{K}$,
most results for the matrix case transfer directly due to the similar structure.

\subsection{Minimization of an augmented function}\label{tensoraugf}
The augmented map\footnote{Due to the distinguishable roles of $J \in \mathcal{K}$ and the map $J^\mathcal{K}_\gamma$
, we here remain faithful to prior literature as for both the letter $J$ has been used before.} analogous to $f_\gamma$ corresponding to the tensor function $f^{\mathcal{K}}_{\gamma}$ is
\begin{align*}
 J^{\mathcal{K}}_{\gamma}(X,\{W^{(J)}\}_{J \in \mathcal{K}}) := \sum_{J \in \mathcal{K}} J_{\gamma,n_J}(X^{[J]},W^{(J)})
 \end{align*}
for
\begin{align*}
J_{\gamma,m}(A,H) & = \mathrm{trace}(H (A A^T  + \gamma I)) - \log \det(H) - m \\
 & = \sum_{J \in \mathcal{K}} \|H^{1/2} A\|_F^2 + \gamma \|H^{1/2}\|_F^2 - \log \det(H) - m,
\end{align*}
where each $W^{(J)} \in \R^{n_J \times n_J}$ ranges over $W^{(J)} = (W^{(J)})^T \succ 0$ (symmetric positive definite).
Consequently, with the same argumentation as in \cite{Kr21_Asy,FoRaWa11_Low}, it is 
 \begin{align*}
  \frac{\partial}{\partial W^{(J)}}\ J^{\mathcal{K}}_{\gamma}(X,\{W^{(J)}\}_{J \in \mathcal{K}}) = X^{[J]} (X^{[J]})^T  + \gamma I - (W^{(J)})^{-1}
 \end{align*}
 and thus
\begin{align}\label{tensorupdateW}
 W^{(J)}_{\gamma,X} :=&\ \underset{W^{(J)} = (W^{(J)})^T \succ 0}{\mathrm{argmin}} J^{\mathcal{K}}_{\gamma}(X,\{W^{(J)}\}_{J \in \mathcal{K}}) = (X^{[J]} (X^{[J]})^T  + \gamma I)^{-1}.
\end{align}
It likewise holds true that
\begin{align}\label{returntoffct}
 f^{\mathcal{K}}_{\gamma}(X) = J^{\mathcal{K}}_{\gamma}(X,\{W^{(J)}_{\gamma,X}\}_{J \in \mathcal{K}}).
\end{align}
Further, the minimizer in $X$ is determined by an ordinary least squares problem.
In order to derive the closed form solution for the minimizer, we note that each $W^{(J)}$, $J \in \mathcal{K}$, defines linear operations
\begin{align*}
(\mathcal{W}^{(J)})^\alpha: \R^{n_1 \times \ldots \times n_d} \rightarrow \R^{n_1 \times \ldots \times n_d}, \quad ((\mathcal{W}^{(J)})^\alpha (X))^{[J]} := (W^{(J)})^\alpha X^{[J]}, \quad \alpha > 0.
\end{align*}
We can thereby write
\begin{align*}
 \sum_{J \in \mathcal{K}} \|(W^{(J)})^{1/2} X^{[J]} \|_F^2 & = \sum_{J \in \mathcal{K}} \| (\mathcal{W}^{(J)})^{1/2} (X) \|_F^2 = \| \overline{\mathcal{W}}^\mathcal{K}(X) \|_F^2,
\end{align*}
where
 $\overline{\mathcal{W}}^{\mathcal{K}}(X) := \{(\mathcal{W}^{(J)})^{1/2}(X)\}_{J \in \mathcal{K}} \in \R^{n_1 \times \ldots \times n_d \times |\mathcal{K}|}$.
Based on the operator $\overline{\mathcal{W}}^{\mathcal{K}}$ (cf. \cite{Kr21_Asy}), the sought minimizer is given by
\begin{align} \label{tensorX_W}
 X_W^\mathcal{K} := \underset{X \in \mathcal{L}^{-1}(y)}{\mathrm{argmin}} J^{\mathcal{K}}_{\gamma}(X,\{W^{(J)}\}_{J \in \mathcal{K}})
 = \widehat{\mathcal{W}}^{-1} \circ \mathcal{L}^\ast \circ (\mathcal{L} \circ \widehat{\mathcal{W}}^{-1} \circ \mathcal{L}^\ast)^{-1}(y)
\end{align}
for 
\begin{align}\label{widehatdef}
 \widehat{\mathcal{W}}^\mathcal{K}(X) := (\overline{\mathcal{W}}^{\mathcal{K}})^\ast \circ \overline{\mathcal{W}}^{\mathcal{K}}(X) = \sum_{J \in \mathcal{K}} \mathcal{W}^{(J)}(X),
\end{align}
where $(\cdot)^\ast$ denotes adjoint operators.
Further, following \cite{Kr21_Asy,FoRaWa11_Low,MoFa12_Ite}, we have
\begin{align}
  \widehat{\mathcal{W}}^\mathcal{K}(X^{\mathcal{K}}_W) \perp \mathrm{kernel}(\mathcal{L}). \label{tensorkernelL}
\end{align}
Vice versa, $X^{\mathcal{K}}_W$ is the unique solution to \cref{tensorkernelL} subject to $\mathcal{L}(X^{\mathcal{K}}_W) = y$.
A more stable update formula is provided by \cite{Kr21_Asy} through
\begin{align}\label{eq:kernelupdate}
 X^{\mathcal{K}}_W = X_0 - \mathcal{K} \circ (\mathcal{K}^\ast \circ \widehat{\mathcal{W}}^\mathcal{K} \circ \mathcal{K})^{-1} \circ \mathcal{K}^\ast \circ \widehat{\mathcal{W}}^\mathcal{K}(X_0),
\end{align}
where $X_0$ is one arbitrary solution to $\mathcal{L}(X_0) = y$ and $\mathcal{K}: \R^{\prod_{i=1}^d n_i - \ell} \rightarrow \R^{n_1 \times \ldots \times n_d}$ is a kernel representation of $\mathcal{L}$, whereby $\mathrm{image}(\mathcal{K}) = \mathrm{kernel}(\mathcal{L})$.
Due to the sum structure, also the gradient properties generalize to the tensor case.
\begin{corollary} \label{stabilizertensor}
 It is
\begin{align}\label{returntofdiv}
 \nabla_X f^{\mathcal{K}}_{\gamma}(X) = \nabla_X J^{\mathcal{K}}_{\gamma}(X,\{W^{(J)}\}_{J \in \mathcal{K}})|_{W^{(J)} = W^{(J)}_{\gamma,X},\ J \in \mathcal{K}}.
\end{align}
 Thus $X$ is a stationary point of $f^{\mathcal{K}}_{\gamma}$ if and only if $X = X^{\mathcal{K}}_W$ for $W^{(J)} = W^{(J)}_{\gamma,X}, \ J \in \mathcal{K}$,
 which means that $(X,\{W^{(J)}_{\gamma,X}\}_{J \in \mathcal{K}})$ is a stationary point of $J^{\mathcal{K}}_{\gamma}$.
\end{corollary}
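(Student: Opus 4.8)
The plan is to obtain both assertions from the corresponding matrix statements in \cite{Kr21_Asy} by exploiting the additive structure of $f^{\mathcal{K}}_\gamma$ and $J^{\mathcal{K}}_\gamma$ over $J \in \mathcal{K}$, combined with an envelope-type argument. First I would record that each matricization $(\cdot)^{[J]}$ is a linear isometry with respect to the Frobenius inner product, so that the $X$-gradient of any term $F(X^{[J]})$ is obtained by reshaping the matrix gradient $(\nabla_A F)(X^{[J]})$ back to tensor shape via the adjoint (inverse) reshaping. Summing over $J$, both $\nabla_X f^{\mathcal{K}}_\gamma$ and $\nabla_X J^{\mathcal{K}}_\gamma$ then decompose termwise, and each term is governed by the already-established matrix identity $\nabla_A f_\gamma(A) = \nabla_A J_{\gamma,m}(A,W)|_{W = W_{\gamma,A}}$, where $\nabla_A J_{\gamma,m}(A,W) = 2WA$ and $W_{\gamma,A} = (AA^T+\gamma I)^{-1}$.

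For \eqref{returntofdiv} itself I would argue via the chain rule applied to the identity \eqref{returntoffct}, namely $f^{\mathcal{K}}_\gamma(X) = J^{\mathcal{K}}_\gamma(X, \{W^{(J)}_{\gamma,X}\}_{J \in \mathcal{K}})$. Differentiating the right-hand side in $X$ produces the direct ($\partial/\partial X$) contribution plus the indirect contributions $\sum_{J \in \mathcal{K}} (\partial J^{\mathcal{K}}_\gamma / \partial W^{(J)}) \cdot (\partial W^{(J)}_{\gamma,X}/\partial X)$. By \eqref{tensorupdateW}, each $W^{(J)}_{\gamma,X}$ is exactly the minimizer of $J^{\mathcal{K}}_\gamma$ in $W^{(J)}$, so $\partial J^{\mathcal{K}}_\gamma / \partial W^{(J)} = 0$ at that point and every indirect contribution vanishes. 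What remains is precisely the partial $X$-gradient evaluated at $W^{(J)} = W^{(J)}_{\gamma,X}$, which is \eqref{returntofdiv}.

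For the characterization of stationary points I would proceed in two steps. For fixed weights, $X \mapsto J^{\mathcal{K}}_\gamma(X, \{W^{(J)}\}_{J \in \mathcal{K}})$ is a strictly convex quadratic in $X$ (its quadratic part is $\|\overline{\mathcal{W}}^{\mathcal{K}}(X)\|_F^2$ with $\widehat{\mathcal{W}}^{\mathcal{K}} \succ 0$), whose unique stationary point subject to $\mathcal{L}(X)=y$ is the least-squares solution $X^{\mathcal{K}}_W$ of \eqref{tensorX_W}, equivalently characterized by the orthogonality \eqref{tensorkernelL}. Then $X$ is a stationary point of $f^{\mathcal{K}}_\gamma$ on $\mathcal{L}^{-1}(y)$ iff $\nabla_X f^{\mathcal{K}}_\gamma(X) \perp \ker(\mathcal{L})$; by \eqref{returntofdiv} this is equivalent to $\nabla_X J^{\mathcal{K}}_\gamma(X,\{W^{(J)}_{\gamma,X}\}_{J \in \mathcal{K}}) \perp \ker(\mathcal{L})$, i.e.\ to $X = X^{\mathcal{K}}_W$ with $W^{(J)} = W^{(J)}_{\gamma,X}$. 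Finally, since the resulting pair then minimizes $J^{\mathcal{K}}_\gamma$ in $X$ (by the above) and in each $W^{(J)}$ (by \eqref{tensorupdateW}), it is a stationary point of $J^{\mathcal{K}}_\gamma$.

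I expect the only genuine subtlety, rather than a real obstacle, to be bookkeeping: verifying that the weight operators $\mathcal{W}^{(J)}$, which act on distinct matricizations and are \emph{not} matricizations of a common tensor, assemble correctly under the adjoint-reshaping so that the termwise matrix gradients really do add up to the tensor gradients of $f^{\mathcal{K}}_\gamma$ and $J^{\mathcal{K}}_\gamma$. Once this reshaping is handled cleanly, the corollary follows immediately from the matrix case, as claimed in the statement.
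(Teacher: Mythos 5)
Your proposal is correct and follows essentially the paper's own route: the paper states \cref{stabilizertensor} without a separate proof, justifying it by the sum structure over $J \in \mathcal{K}$ and ``the same argumentation as in \cite{Kr21_Asy,FoRaWa11_Low}'', which is precisely what you spell out — the envelope argument on \cref{returntoffct} using the vanishing $W^{(J)}$-derivative at $W^{(J)}_{\gamma,X}$ from \cref{tensorupdateW}, and the characterization of constrained stationary points via the least-squares solution $X^{\mathcal{K}}_W$ and the orthogonality condition \cref{tensorkernelL}. No gaps; the termwise reshaping bookkeeping you flag is indeed the only detail to check, and it works exactly as you describe.
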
%
As in the matrix case, $\gamma \rightarrow \infty$ provides a unique, canonical starting value.
\begin{corollary}\label{gammainf}
 Independently of $X^{(0)} \in \mathcal{L}^{-1}(y)$, it holds
 \begin{align*}
  \lim_{\gamma \rightarrow \infty} \underset{X \in \mathcal{L}^{-1}(y)}{\mathrm{argmin}} f^\mathcal{K}_\gamma(X) = \lim_{\gamma \rightarrow \infty} X_{\{W^{(J)}_{\gamma,X^{(0)}}\}_{J \in \mathcal{K}}}
  = \underset{X \in \mathcal{L}^{-1}(y)}{\mathrm{argmin}} \|X\|_F,
 \end{align*}
 where the first limit is possibly a set convergence.
\end{corollary}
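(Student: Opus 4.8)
The plan is to prove the two claimed equalities separately, both reducing to the elementary fact that, since $\mathcal{L}$ is surjective, the affine set $\mathcal{L}^{-1}(y)$ possesses a unique minimal Frobenius-norm element, namely the pseudoinverse solution $X_{\min} := \mathcal{L}^\ast(\mathcal{L}\mathcal{L}^\ast)^{-1}(y)$. The mechanism common to both parts is that the weights degenerate to (a multiple of) the identity after rescaling: from $W^{(J)}_{\gamma,X^{(0)}} = ((X^{(0)})^{[J]}((X^{(0)})^{[J]})^T + \gamma I)^{-1}$ one computes $\gamma\,W^{(J)}_{\gamma,X^{(0)}} = (\gamma^{-1}(X^{(0)})^{[J]}((X^{(0)})^{[J]})^T + I)^{-1} \longrightarrow I$ as $\gamma \to \infty$, and this limit is manifestly independent of $X^{(0)}$, which already explains the stated independence.

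For the second equality I would argue purely algebraically from the closed form \cref{tensorX_W}. The map $\widehat{\mathcal{W}} \mapsto \widehat{\mathcal{W}}^{-1}\circ\mathcal{L}^\ast\circ(\mathcal{L}\circ\widehat{\mathcal{W}}^{-1}\circ\mathcal{L}^\ast)^{-1}(y)$ is invariant under scaling $\widehat{\mathcal{W}} \mapsto c\,\widehat{\mathcal{W}}$ for $c>0$, since the factors $c^{-1}$ and $c$ cancel. Taking $c=\gamma$ and $\widehat{\mathcal{W}}=\widehat{\mathcal{W}}^\mathcal{K}$ assembled from the $W^{(J)}_{\gamma,X^{(0)}}$ via \cref{widehatdef}, and using that each $\mathcal{W}^{(J)}$ built from the identity weight is the identity operator (because $(\cdot)^{[J]}$ is a reshaping isometry, so $\mathcal{W}^{(J)}(X)=X$ when $W^{(J)}=I$), the limit above gives $\gamma\,\widehat{\mathcal{W}}^\mathcal{K} \to \sum_{J\in\mathcal{K}}\mathrm{Id} = |\mathcal{K}|\,\mathrm{Id}$. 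Passing to the limit in the scale-invariant (hence $|\mathcal{K}|$-independent) formula then yields exactly $\mathcal{L}^\ast(\mathcal{L}\mathcal{L}^\ast)^{-1}(y)=X_{\min}$; the interchange of limit and formula is justified by continuity on the set of positive operators, together with invertibility of $\mathcal{L}\mathcal{L}^\ast$.

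For the first equality I would rescale the objective. With $N := \sum_{J\in\mathcal{K}} n_J$ one has $f^{\mathcal{K}}_\gamma(X) = N\log\gamma + \gamma^{-1}h_\gamma(X)$, where $h_\gamma(X) := \gamma\sum_{J\in\mathcal{K}}\sum_{i=1}^{n_J}\log\bigl(1+\sigma_i^{(J)}(X)^2/\gamma\bigr)$; since $\gamma>0$ is fixed, $\mathrm{argmin}\,f^{\mathcal{K}}_\gamma = \mathrm{argmin}\,h_\gamma$. Because $\gamma\log(1+a/\gamma)$ increases monotonically to $a$, Dini's theorem gives $h_\gamma \to \sum_{J}\sum_i\sigma_i^{(J)}(X)^2 = |\mathcal{K}|\,\|X\|_F^2$ uniformly on compacta (the last step again using that $(\cdot)^{[J]}$ is an isometry). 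The limit functional has the unique minimizer $X_{\min}$ over $\mathcal{L}^{-1}(y)$, so it remains to upgrade locally uniform convergence to convergence of minimizers. For equicoercivity, a minimizer $X_\gamma$ obeys $h_\gamma(X_\gamma)\le h_\gamma(X_{\min})\le |\mathcal{K}|\,\|X_{\min}\|_F^2 =: C$ via $\log(1+t)\le t$; if some $\sigma_i^{(J)}(X_\gamma)^2>\gamma$ then $h_\gamma(X_\gamma)\ge\gamma\log 2>C$ for large $\gamma$, a contradiction, so all ratios $\sigma^2/\gamma$ lie in $[0,1]$, whence $h_\gamma(X_\gamma)\ge (\log 2)\,|\mathcal{K}|\,\|X_\gamma\|_F^2$ bounds $\|X_\gamma\|_F$ uniformly. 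Thus the minimizers remain in a fixed ball and a standard $\Gamma$-convergence argument forces $X_\gamma\to X_{\min}$; uniqueness of the limit collapses the possibly set-valued $\mathrm{argmin}\,f^{\mathcal{K}}_\gamma$ to the single point.

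The main obstacle is the first equality: the family $f^{\mathcal{K}}_\gamma$ is nonconvex, so one cannot simply invoke convexity, and the essential technical point is the equicoercivity estimate that keeps the minimizers from escaping to infinity together with the passage from uniform convergence of the rescaled functionals to convergence of their argmins. The second equality is routine once the scale invariance of \cref{tensorX_W} is noted.
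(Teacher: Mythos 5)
Your proof is correct, and it is worth noting that the paper itself never spells out an argument for \cref{gammainf}: the corollary is stated without proof, implicitly deferring to the analogous matrix-case statement in \cite{Kr21_Asy}. Your write-up therefore supplies the missing details, and both halves hold up under scrutiny. For the second equality, the key observations — that the update formula \cref{tensorX_W} is invariant under rescaling $\widehat{\mathcal{W}} \mapsto c\,\widehat{\mathcal{W}}$, that $\gamma\,W^{(J)}_{\gamma,X^{(0)}} \to I$ independently of $X^{(0)}$, and that $\mathcal{W}^{(J)} = \mathrm{Id}$ when $W^{(J)} = I$ because $(\cdot)^{[J]}$ is a mere reshaping — reduce the limit to the pseudoinverse solution $\mathcal{L}^\ast(\mathcal{L}\mathcal{L}^\ast)^{-1}(y)$, with continuity of the formula at the positive definite limit operator $|\mathcal{K}|\,\mathrm{Id}$ justifying the interchange; this is exactly the mechanism one would extract from the matrix case. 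For the first equality, your rescaling $f^{\mathcal{K}}_\gamma \mapsto h_\gamma$, the monotone (Dini) convergence $h_\gamma \nearrow |\mathcal{K}|\,\|\cdot\|_F^2$, and in particular the equicoercivity estimate — first excluding $\sigma_i^{(J)}(X_\gamma)^2 > \gamma$ via $h_\gamma(X_\gamma) \le |\mathcal{K}|\,\|X_{\min}\|_F^2$, then using $\log(1+t) \ge t\log 2$ on $[0,1]$ to trap $\|X_\gamma\|_F$ in a fixed ball — are all sound, and they correctly upgrade to set convergence of the (possibly non-singleton, since $f^{\mathcal{K}}_\gamma$ is nonconvex) argmin toward the unique least-norm solution. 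Two cosmetic remarks: you tacitly use that minimizers of $f^{\mathcal{K}}_\gamma$ over $\mathcal{L}^{-1}(y)$ exist for each fixed $\gamma$ (coercivity of $f^{\mathcal{K}}_\gamma$ plus closedness of the affine set gives this in one line), and the final $\Gamma$-convergence step can be made elementary by the monotonicity you already established: for fixed $\gamma_0$ and any accumulation point $\bar X$ of $X_{\gamma_k}$, $h_{\gamma_0}(\bar X) \le \lim_k h_{\gamma_k}(X_{\gamma_k}) \le |\mathcal{K}|\,\|X_{\min}\|_F^2$, and letting $\gamma_0 \to \infty$ forces $\bar X = X_{\min}$.
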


\subsection{Complementary weights}\label{sec:complwe}
In the matrix case \cite{Kr21_Asy}, there is one more equitable choice $f^{(2)}(A) = \log \det(A^T A + \gamma I)$
as opposed to $f^{(1)}_\gamma(A) = f_\gamma(A) = \log \det(A A^T + \gamma I)$. 
For families $\mathcal{K}$ containing more subsets, each set $J \in \mathcal{K}$ may be replaced by its
complement. For a subset $\mathcal{S} \subset \mathcal{K}$, let therefor
\begin{align}\label{KS}
  \mathcal{K}^{\mathcal{S}} := (\mathcal{K} \setminus \mathcal{S}) \cup \{ J^{\mathsf{c}} \mid J \in \mathcal{S}\}, \quad J^{\mathsf{c}} := [d] \setminus J,
\end{align}
for $W^{(J)} = W^{(J)}_{\gamma,X}$, $J \in \mathcal{K}^{\mathcal{S}}$.
Although the updates $X_W^{(\mathcal{K})}$ and $X_W^{(\mathcal{K}^\mathcal{S})}$ in general differ, the overall properties outlined in 
\cref{sec:logdetirls} are not influenced as
 \begin{align*}
 f^{\mathcal{K}^{\mathcal{S}}}_{\gamma}(X) 
 & =   \sum_{J \in \mathcal{S}} \sum_{i=1}^{n_{J^{\mathsf{c}}}} \log(\sigma_i^{(J^{\mathsf{c}})}(X)^2 + \gamma) + \sum_{J \in \mathcal{K} \setminus \mathcal{S}} \sum_{i=1}^{n_J} \log (\sigma_i^{(J)}(X)^2 + \gamma) \\
 & = f^{\mathcal{K}}_{\gamma}(X)  + \sum_{J \in \mathcal{S}} (n_{J^{\mathsf{c}}} - n_J) \log \gamma.
\end{align*}
While the weights are in that sense interchangeable, switching between complementary weights becomes essential for \AIRLS{}-$0\mathcal{K}$ as captured in \cref{switchlemma}. 

\subsection{Adjusted \texorpdfstring{\IRLS{}-$0\mathcal{K}$}{IRLS-0K} algorithm}
Based on a monotonically declining sequence $\{\gamma^{(i)}\}_{i \geq 0} \subset \R_{> 0}$ (cf. \cref{Xastdef}),
and (optionally) a sequence $\mathcal{S}_i \subset \mathcal{K}$ (cf. \cref{sec:complwe}),
\cref{alg:irlsmrreintws} defines the sequence $\{(X^{(i)}, \{W^{(i,J)}\}_{J \in \mathcal{K}}) \}_{i \geq 0}$ 
with $\mathcal{L}(X^{(i)}) = y$ and $(W^{(i,J)})^T = W^{(i,J)} \succ 0$, $J \in \mathcal{K}$, $i \geq 0$. 
These iterates behave largely analogously to the matrix version \cite{Kr21_Asy} (cf. \cite{MoFa12_Ite,FoRaWa11_Low,CaWaBo08_Enh,DaDeFoGu10_Ite}).
In particular, that case is included in \cref{declinelemmatensor} for $d = 2$ and $\mathcal{K} = \{ \{1\} \}$.
 \begin{algorithm}
  \caption{Iteratively reweighted least squares with switching weights}
  \begin{algorithmic}[1] \label{alg:irlsmrreintws}
  \STATE set $X^{(0)} \in \mathcal{L}^{-1}(y)$, $\gamma^{(0)} > 0$  
  \FOR{$i = 1,2,\ldots$}
  \STATE{set $\mathcal{S}_{i-1} \subset \mathcal{K}$ (cf. \cref{sec:complwe})}
  \STATE{$\{W^{(i-1,J)}\}_{J \in \mathcal{K}^{\mathcal{S}_{i-1}}} \sgets \{W^{(J)}_{\gamma^{(i-1)},X^{(i-1)}}\}_{J \in \mathcal{K}^{\mathcal{S}_{i-1}}}$ (cf. \cref{tensorupdateW})}
  \STATE{$X^{(i)} \sgets X^{\mathcal{K}^{S_{i-1}}}_{W^{(i-1)}}$ \label{line1swtwo} (cf. \cref{tensorX_W})}
  \STATE{set $\gamma^{(i)} \leq \gamma^{(i-1)}$ \label{line3sw}}  
  \ENDFOR
  \end{algorithmic}
\end{algorithm}
\begin{theorem} \label{declinelemmatensor}
 Let $\{(X^{(i)})\}_{i \geq 0}$ be generated by \cref{alg:irlsmrreintws}
 for $\{\mathcal{S}_i\}_{i \in \N_0}$ and the weakly decreasing sequence $\{\gamma_i\}_{i \geq 0} \subset \R_{>0}$. 
 Let further $\mathcal{S}^\ast_\gamma \subset \mathcal{L}^{-1}(y)$ be the stationary points of $f^\mathcal{K}_\gamma|_{\mathcal{L}^{-1}(y)}$ for $\gamma > 0$,
 as well as $\gamma^\ast := \lim_{i \rightarrow \infty} \gamma^{(i)}$.
%  We have the following.
 \begin{enumerate}[label=(\roman*)]
  \item For each $i \in \N$ and each $\mathcal{S} \subset \mathcal{K}$, it holds 
  \begin{align*}
	  f^{\mathcal{K}^\mathcal{S}}_{\gamma^{(i)}}(X^{(i)}) \leq f^{\mathcal{K}^\mathcal{S}}_{\gamma^{(i-1)}}(X^{(i-1)}). 
	\end{align*}
 \item If $\gamma^\ast > 0$, then the sequences $X^{(i)}$ and $|f^{\mathcal{K}^\mathcal{S}}_{\gamma^{(i)}}(X^{(i)})|$, $\mathcal{S} \subset \mathcal{K}$, remain bounded.
  \item Further, if $\gamma^\ast > 0$, then
      \begin{align} \label{difftozero2}
        \lim_{i \rightarrow \infty} \|X^{(i)} - X^{(i-1)}\|_F = 0
       \end{align}
 and each accumulation point of $X^{(i)}$ is in $\mathcal{S}^\ast_{\gamma^\ast}$.
 \item (See \cref{ivremark}) Let $\Theta \subset \R_{> 0}$ be an arbitrary, infinite, bounded set with its only accumulation point at $\inf(\Theta) = 0$, and let
 \begin{align*}
  \delta_i := \inf_{S \in \mathcal{S}_{\gamma^{(i)}}^\ast} \| X^{(i)} - S \|, \quad i \in \N.
 \end{align*}
 For an arbitrary, bounded sequence $A = \{\alpha_i\}_{i \in \N_0}$ with $\inf(A) > 0$ (e.g. $\alpha_i = 1$, $i \in \N_0$) and for $\gamma^{(0)} = \max(\Theta)$, we recursively define
 \begin{align*}
 \gamma^{(i+1)} = \begin{cases}
		   \theta_i & \mbox{ if } \alpha_i \delta_i < \theta_i \\
		    \gamma^{(i)} & \mbox{ otherwise }
                  \end{cases}, \quad \theta_i := \max \{ z \in \Theta \mid z < \gamma^{(i)} \}, \quad i \in \N_0.
\end{align*}
 Then $\lim_{i \rightarrow \infty} \delta_i = \gamma^\ast = 0$ and for at least one subsequence $\{X^{(i_\ell)}\}_{\ell \in \N}$,
 there exists a sequence of stationary points $\{S_{\ell}\}_{\ell \in \N}$, $S_{\ell} \in \mathcal{S}^\ast_{\gamma^{(i_\ell)}}$, with $\|S_\ell - X^{(i_\ell)}\| \rightarrow 0$.
 \end{enumerate}
\end{theorem}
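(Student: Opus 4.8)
The plan is to read \cref{alg:irlsmrreintws} as block coordinate descent on the augmented functional $J^{\mathcal{K}}_\gamma$ and to exploit two structural facts established above: that minimizing $J^{\mathcal{K}}_\gamma$ in the weights returns $f^{\mathcal{K}}_\gamma$ (\cref{returntoffct}, \cref{tensorupdateW}), and that passing from $\mathcal{K}$ to any complementary $\mathcal{K}^{\mathcal{S}}$ only shifts $f^{\mathcal{K}}_\gamma$ by the $X$-independent constant $\sum_{J\in\mathcal{S}}(n_{J^{\mathsf{c}}}-n_J)\log\gamma$ (\cref{sec:complwe}). The latter means every $f^{\mathcal{K}^{\mathcal{S}}}_\gamma$ shares the same $X$-gradient and the same stationary points, so a monotonicity proven for the actually-used $\mathcal{S}_{i-1}$ transfers verbatim to an arbitrary $\mathcal{S}$. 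For (i) I would chain $f^{\mathcal{K}^{\mathcal{S}_{i-1}}}_{\gamma^{(i-1)}}(X^{(i-1)})=J^{\mathcal{K}^{\mathcal{S}_{i-1}}}_{\gamma^{(i-1)}}(X^{(i-1)},\{W^{(i-1,J)}\})$ by optimality of the weights, $\geq J^{\mathcal{K}^{\mathcal{S}_{i-1}}}_{\gamma^{(i-1)}}(X^{(i)},\{W^{(i-1,J)}\})\geq f^{\mathcal{K}^{\mathcal{S}_{i-1}}}_{\gamma^{(i-1)}}(X^{(i)})$ by optimality of $X^{(i)}$ (\cref{tensorX_W}) and re-minimization in $W$, and finally $\geq f^{\mathcal{K}^{\mathcal{S}_{i-1}}}_{\gamma^{(i)}}(X^{(i)})$ since each factor $\sigma^2+\gamma$ is increasing in $\gamma$ and $\gamma^{(i)}\leq\gamma^{(i-1)}$; the constant-shift identity then strips off the dependence on $\mathcal{S}_{i-1}$.

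For (ii) and (iii) I assume $\gamma^\ast>0$. Boundedness of $X^{(i)}$ follows because (i) caps $f^{\mathcal{K}}_{\gamma^{(i)}}(X^{(i)})$ by the fixed initial value $M$, while each factor $\sigma^2+\gamma^{(i)}\geq\gamma^\ast$ forces any single $\sigma_j^{(J)}(X^{(i)})^2\leq e^{M}/(\gamma^\ast)^{\sum_J n_J-1}$, whence $\|X^{(i)}\|_F$ and $|f^{\mathcal{K}^{\mathcal{S}}}_{\gamma^{(i)}}(X^{(i)})|$ are bounded. For (iii), the monotone bounded sequence $f^{\mathcal{K}}_{\gamma^{(i)}}(X^{(i)})$ converges, so its decrements vanish; since the iterates are bounded and $\gamma^{(i)}\geq\gamma^\ast$, the smallest eigenvalue of each $W^{(J)}_{\gamma,X}=(X^{[J]}(X^{[J]})^T+\gamma I)^{-1}$ is bounded below by a uniform $c>0$, so the $X$-quadratic is $c$-strongly convex and optimality of $X^{(i)}$ yields $\tfrac{c}{2}\|X^{(i)}-X^{(i-1)}\|_F^2\leq f^{\mathcal{K}^{\mathcal{S}_{i-1}}}_{\gamma^{(i-1)}}(X^{(i-1)})-f^{\mathcal{K}^{\mathcal{S}_{i-1}}}_{\gamma^{(i)}}(X^{(i)})$; rewriting the right-hand side via the constant shift shows it tends to $0$, giving \cref{difftozero2}. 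Along a subsequence on which $\mathcal{S}_{i-1}$ is constant and $X^{(i)}\to\bar X$, continuity of the weights in $(\gamma,X)$ and passing to the limit in the optimality condition \cref{tensorkernelL} make $\bar X$ stationary for $f^{\mathcal{K}}_{\gamma^\ast}$ by \cref{stabilizertensor}, i.e.\ $\bar X\in\mathcal{S}^\ast_{\gamma^\ast}$.

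For (iv) I would first exclude $\gamma^\ast>0$: if $\gamma^{(i)}$ were eventually constant, part (iii) produces a subsequence with $\delta_i\to0$, but then the trigger $\alpha_i\delta_i<\theta_i$ (with $\theta_i$ the fixed next grid value below $\gamma^\ast$) must eventually fire, forcing a further decrease --- a contradiction. Hence $\gamma^{(i)}\searrow0=\gamma^\ast$ and decreases occur infinitely often. At each decrease time $i_\ell$ the trigger gives $\delta_{i_\ell}<\theta_{i_\ell}/\alpha_{i_\ell}\leq\gamma^{(i_\ell+1)}/\inf(A)\to0$, and choosing $S_\ell\in\mathcal{S}^\ast_{\gamma^{(i_\ell)}}$ nearly attaining the infimum defining $\delta_{i_\ell}$ produces the claimed subsequence with $\|S_\ell-X^{(i_\ell)}\|\to0$.

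The hard part will be the full limit $\lim_i\delta_i=0$ as opposed to this subsequential one. The obstacle is the behaviour of $\delta_i$ inside a constant-$\gamma$ phase and across a $\gamma$-transition: not triggering only yields the lower bound $\delta_i\geq\theta_i/\sup(A)$, and when $\gamma$ drops to the next grid value both the iterate and the reference set $\mathcal{S}^\ast_{\gamma^{(i)}}$ change, so $\delta_i$ may jump upward at the start of a phase. I would control this by combining boundedness of the iterates as $\gamma\searrow0$ (so that all stationary sets lie in one compact region), the per-step estimate $\|X^{(i)}-X^{(i-1)}\|_F\to0$, which survives along each phase because the strong-convexity modulus $c$ only improves as $\gamma$ shrinks, and an upper-semicontinuity (Kuratowski closedness) property of $\gamma\mapsto\mathcal{S}^\ast_\gamma$ as $\gamma\searrow0$, so that the stationary sets at two adjacent grid levels approach a common limit and their mutual distance tends to $0$. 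Together these bound the start-of-phase value of $\delta_i$ by a quantity vanishing with $\gamma^{(i)}$, upgrading the subsequential limit to the full one. I expect this transition/closedness argument --- that the adaptive schedule forces the iterates to track a convergent family of stationary points as $\gamma\searrow0$ --- to be the crux, running in parallel to the matrix analysis of \cite{Kr21_Asy}.
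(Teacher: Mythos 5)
Your parts (i)--(iii) are correct and are essentially the paper's own argument. For (i) you use the identical chain (optimality in the weights via \cref{tensorupdateW}/\cref{returntoffct}, optimality of $X^{(i)}$ via \cref{tensorX_W}, monotonicity in $\gamma$, and the $X$-independent shift $\sum_{J\in\mathcal{S}}(n_{J^{\mathsf{c}}}-n_J)\log\gamma$ from \cref{sec:complwe} to strip off $\mathcal{S}_{i-1}$); for (ii) the same product bound $|\mathcal{K}|\,\gamma^{(\sum_J n_J)-1}\|X\|_F^2\leq\exp(f^{\mathcal{K}}_\gamma(X))$; and for (iii) your ``strong convexity'' step is exactly the paper's identity $\langle X^{(i-1)}-X^{(i)},\widehat{\mathcal{W}}^{(i-1)}(X^{(i-1)}-X^{(i)})\rangle$ obtained from the orthogonality \cref{tensorkernelL}, combined with the same eigenvalue bound $\lambda_{\min}(\widehat{\mathcal{W}})\geq|\mathcal{K}|(\|X\|_F^2+\gamma)^{-1}$, which needs the boundedness from (ii). The only deviation is how the right-hand side is made to vanish: you use convergence of the monotone, bounded sequence $f^{\mathcal{K}}_{\gamma^{(i)}}(X^{(i)})$ together with the vanishing $\log\gamma$-shift, whereas the paper sums the per-step decrements over all $2^{|\mathcal{K}|}$ subsets $\mathcal{S}\subset\mathcal{K}$ (each nonnegative by (i)) and telescopes each fixed-$\mathcal{S}$ sum. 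Both are valid and rest on the same ingredients; your subsequent accumulation-point argument (pigeonhole over the finitely many $\mathcal{S}$, continuity of $W_{\gamma,X}$, \cref{difftozero2}, and \cref{stabilizertensor}) is again the paper's.

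The genuine gap is in (iv), precisely where you flag it. To be fair, the paper gives no self-contained proof there either --- it states the proof is ``word for word the same'' as the matrix case \cite{Kr21_Asy} (cf. \cref{ivremark}) --- and the two sub-claims you do prove ($\gamma^\ast=0$ by contradiction through (ii)/(iii) and the trigger, and the subsequential statement at the decrease times, where $\delta_{i_\ell}<\theta_{i_\ell}/\alpha_{i_\ell}\leq\gamma^{(i_\ell+1)}/\inf(A)\to 0$) are correct. But the full-limit claim $\lim_i\delta_i=0$ is left open, and the repair you sketch would not close it as stated, for two concrete reasons. First, Kuratowski closedness (upper semicontinuity) of $\gamma\mapsto\mathcal{S}^\ast_\gamma$ controls the wrong direction: it says limits of stationary points are stationary, whereas at a phase transition you need a stationary point of $f^{\mathcal{K}}_{\gamma_{\mathrm{new}}}$ near the current iterate --- a lower-semicontinuity property of the stationary-set map that fails in general, all the more since consecutive elements of the arbitrary grid $\Theta$ need not be close to one another (e.g.\ $\Theta=\{2^{-k}\}$). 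Second, your assertion that the per-step estimate ``survives along each phase because the strong-convexity modulus $c$ only improves as $\gamma$ shrinks'' tacitly assumes the iterates stay bounded; boundedness is part (ii), which is available only when $\gamma^\ast>0$ --- exactly the situation excluded in (iv). The a priori bound degenerates like $\|X^{(i)}\|_F^2\leq e^{M}(\gamma^{(i)})^{1-\sum_J n_J}/|\mathcal{K}|$, so $\lambda_{\min}(\widehat{\mathcal{W}}^{(i)})\geq|\mathcal{K}|(\|X^{(i)}\|_F^2+\gamma^{(i)})^{-1}$ may degenerate as well, and neither \cref{difftozero2} nor vanishing decrements are guaranteed along the tail. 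So the crux you identify remains unresolved in your proposal, and the outlined strategy contains steps that would fail.
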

\begin{remark}\label{ivremark}
 Part $(iv)$ of \cref{declinelemmatensor} as well as its proof are literally the same as in the matrix case \cite{Kr21_Asy}. Roughly, if the sequence $\{\gamma^{(i)}\}_{i \in \N}$ is decreased to $\gamma^\ast = 0$ slowly enough,
then $X^{(i)}$ can only converge to a limit of stationary points of $f_\gamma|_{\mathcal{L}^{-1}(y)}$ for $\gamma \searrow 0$.
The contrary case of too fast decline has been covered in \cite{Kr21_Asy} as well.
\end{remark}
\begin{proof}
See \cref{sec:minorproofs}.
\end{proof}

\section{Relaxed iteratively reweighted least squares}\label{sec:tensor}
Too large mode sizes $n$ or high dimensions $d$ in practice prohibit to even operate on the spaces $\mathcal{L}^{-1}(y)$ or $\R^{n_1 \times \ldots \times n_d}$ directly. 
As hinted on in \cref{sec:datasparseintro}, so called hierarchical decompositions can provide
remedy in the same way low rank matrix decompositions do. This however first requires to relax
the affine constraint $\mathcal{L}(X) = y$.

\subsection{Relaxation of affine constraint}\label{sec:relaxed}
Let 
$a_\gamma(s) := s - \sum_{J \in \mathcal{K}} n_J \log(\gamma)$, $\gamma > 0$. 
As each of these function is monotonically increasing, a composition with such does not change minimizers. We correspondingly define 
\begin{align}\label{tensorfa0}
 f^{a,\mathcal{K}}_{\gamma}(X) := a_\gamma \circ f^{\mathcal{K}}_{\gamma}(X) = \log \prod_{J \in \mathcal{K}} \prod_{i=1}^{\infty} (1 + \frac{\sigma_i^{(J)}(X)^2}{\gamma}),
\end{align}
with $\sigma_i^{(J)}(X) := 0$ for $i > n_J$, $J \in \mathcal{K}$. Likewise, let
$
 J^{a,\mathcal{K}}_{\gamma}(X,\{W^{(J)}\}_{J \in \mathcal{K}}) := a_\gamma \circ J^{\mathcal{K}}_{\gamma}(X,\{W^{(J)}\}_{J \in \mathcal{K}})$.
With the same reasoning as in \cite{Kr21_Asy}, one then defines
\begin{align}
 F_{\gamma,\omega}^{a,\mathcal{K}}(X) & := \| \mathcal{L}(X) - y \|_F^2 + c_{\mathcal{L}}\cdot \omega^2 \cdot f_\gamma^{a,\mathcal{K}}(X), \nonumber \\
 \mathcal{J}^{a,\mathcal{K}}_{\gamma,\omega}(X,W) & := \| \mathcal{L}(X) - y \|_F^2 + c_{\mathcal{L}}\cdot \omega^2 \cdot J^{a}_{\gamma,\mathcal{K}}(X,W). \label{relaxedJ}
\end{align}
for an appropriate scaling constant $c_\mathcal{L}$. As
$\frac{\partial}{\partial \gamma} F_{\gamma,\sqrt{\gamma}}^{a,\mathcal{K}}(X) = c_{\mathcal{L}} \cdot \frac{\partial}{\partial \gamma} (\gamma \cdot f_{\gamma}^{a,\mathcal{K}}(X)) \geq 0$,
the choice $\omega = \sqrt{\gamma}$ seems suitable. In that case, we skip the index $\omega$.
\subsection{Subspace dependent, relaxed optimization algorithm}\label{sec:subdep}
To later incorporate the alternating optimization, we here also consider an additional sequence of subspaces
$\{\mathcal{T}_i\}_{i \in \N_0}$ with $\mathcal{T}_i \subseteq \R^{n_1 \times \ldots \times n_d}$, as well as $\mathcal{T}_i \cap \mathcal{T}_{i-1} \ni X^{(i)}$, $i \in \N_0$.
The latter condition ensures that the previous iterate remains admissible. This then yields the modified \cref{alg:relT}.
 \begin{algorithm}
  \caption{Subspace restricted \IRLS{} with switching weights}
  \begin{algorithmic}[1] \label{alg:relT}
  \STATE set $X^{(0)} \in \mathcal{L}^{-1}(y)$, $\gamma^{(0)} > 0$  
  \FOR{$i = 1,2,\ldots$}
  \STATE{set $\mathcal{S}_{i-1} \subset \mathcal{K}$ (cf. \cref{sec:complwe})}
  \STATE{$\{W^{(i-1,J)}\}_{J \in \mathcal{K}^{(\mathcal{S}_{i-1})}} \sgets \{W^{(J)}_{\gamma^{(i-1)},X^{(i-1)}}\}_{J \in \mathcal{K}^{(\mathcal{S}_{i-1})}}$ (cf. \cref{tensorupdateW})}
\STATE{set a subspace $\mathcal{T}_{i-1} \subset \R^{n_1 \times \ldots \times n_d}$ with $\mathcal{T}_{i-1} \ni X^{(i-1)}$}  
  \STATE{$X^{(i)} \sgets \mathrm{argmin}_{X \in \mathcal{T}_{i-1}}\  \mathcal{J}^{a,\mathcal{K}^{(S_{i-1})}}_{\gamma^{(i-1)}}(X,\{W^{(i-1,J)}\}_{J \in \mathcal{K}^{(S_{i-1})}})$ (cf. \cref{relaxedJ})}
  \STATE{set $\gamma^{(i)} \leq \gamma^{(i-1)}$}  
  \ENDFOR
  \end{algorithmic}
\end{algorithm}
While the objective function is still monotonically decreased as provided by \cref{mondeclt}, to show the remaining parts of \cref{declinelemmatensor}
as far as possible for now remains subject to future research.
\begin{corollary}\label{mondeclt}
 For $X^{(i)}$ as defined by \cref{alg:relT} it holds
\begin{align*}
 F_{\gamma^{(i)}}^{a,\mathcal{K}^\mathcal{S}}(X^{(i)}) \leq F_{\gamma^{(i-1)}}^{a,\mathcal{K}^\mathcal{S}}(X^{(i-1)}),
\end{align*}
for all $i \in \N$ and all $\mathcal{S} \subset \mathcal{K}$.
\end{corollary}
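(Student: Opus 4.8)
The plan is to run a majorize–minimize argument of exactly the shape used for the unrelaxed iteration in \cref{declinelemmatensor}(i), but now carrying the full relaxed functional (data term plus penalty) through every link rather than dropping the affine residual. The decisive preliminary observation, which makes the statement hold uniformly in $\mathcal{S}$, is that the relaxed objective is insensitive to complementation: since $X^{[J^{\mathsf{c}}]} = (X^{[J]})^T$, the matricizations $X^{[J]}$ and $X^{[J^{\mathsf{c}}]}$ have identical nonzero singular values, so each factor $\prod_i (1 + \sigma_i^{(J)}(X)^2/\gamma)$ appearing in $f^{a,\mathcal{K}}_\gamma$ is unchanged when $J$ is replaced by $J^{\mathsf{c}}$ (the zero singular values contribute factors $1$, so the differing factor counts $n_J$ versus $n_{J^{\mathsf{c}}}$ are irrelevant here). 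Hence $f^{a,\mathcal{K}^{\mathcal{S}}}_\gamma = f^{a,\mathcal{K}}_\gamma$ and thus $F^{a,\mathcal{K}^{\mathcal{S}}}_\gamma = F^{a,\mathcal{K}}_\gamma$ for every $\mathcal{S} \subset \mathcal{K}$ — in contrast to the unrelaxed $f^{\mathcal{K}^{\mathcal{S}}}_\gamma$, which differs from $f^{\mathcal{K}}_\gamma$ by the $\gamma$-dependent constant of \cref{sec:complwe}. It therefore suffices to establish the decrease for $F^{a,\mathcal{K}}_\gamma$, and the claim for all $\mathcal{S}$ follows immediately.

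Abbreviating $\gamma = \gamma^{(i-1)}$, $\mathcal{S}' = \mathcal{S}_{i-1}$ and $W = \{W^{(i-1,J)}\}_{J \in \mathcal{K}^{\mathcal{S}'}}$, I would then assemble the chain
\begin{align*}
F^{a,\mathcal{K}}_{\gamma^{(i)}}(X^{(i)})
&\overset{(4)}{\leq} F^{a,\mathcal{K}}_{\gamma}(X^{(i)})
\overset{(3)}{\leq} \mathcal{J}^{a,\mathcal{K}^{\mathcal{S}'}}_{\gamma}(X^{(i)}, W) \\
&\overset{(2)}{\leq} \mathcal{J}^{a,\mathcal{K}^{\mathcal{S}'}}_{\gamma}(X^{(i-1)}, W)
\overset{(1)}{=} F^{a,\mathcal{K}}_{\gamma}(X^{(i-1)}).
\end{align*}
Link $(1)$ is the equality at optimal weights: by the weight update \cref{tensorupdateW} with \cref{returntoffct} applied to the family $\mathcal{K}^{\mathcal{S}'}$, one has $J^{\mathcal{K}^{\mathcal{S}'}}_\gamma(X^{(i-1)},W) = f^{\mathcal{K}^{\mathcal{S}'}}_\gamma(X^{(i-1)})$; composing with the monotone $a_\gamma$ and adding the common data term $\|\mathcal{L}(X^{(i-1)}) - y\|_F^2$ (which does not depend on $W$) gives $\mathcal{J}^{a,\mathcal{K}^{\mathcal{S}'}}_\gamma(X^{(i-1)},W) = F^{a,\mathcal{K}^{\mathcal{S}'}}_\gamma(X^{(i-1)}) = F^{a,\mathcal{K}}_\gamma(X^{(i-1)})$ by the invariance established above. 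Link $(2)$ is the defining minimization step of \cref{alg:relT}, using that $X^{(i-1)} \in \mathcal{T}_{i-1}$ is admissible, so the minimizer $X^{(i)}$ can only lower the value. Link $(3)$ is the majorization $\mathcal{J}^{a,\mathcal{K}^{\mathcal{S}'}}_\gamma(\cdot, W) \geq F^{a,\mathcal{K}}_\gamma(\cdot)$, valid because $W \mapsto J^{\mathcal{K}^{\mathcal{S}'}}_\gamma(X,W)$ is minimized at $W_{\gamma,X}$ with value $f^{\mathcal{K}^{\mathcal{S}'}}_\gamma(X)$ (again \cref{tensorupdateW} and \cref{returntoffct}), whence $J^{a,\mathcal{K}^{\mathcal{S}'}}_\gamma(X,W) \geq f^{a,\mathcal{K}^{\mathcal{S}'}}_\gamma(X) = f^{a,\mathcal{K}}_\gamma(X)$ after composing with $a_\gamma$ and reinstating the common data term. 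Link $(4)$ is the monotonicity in $\gamma$ of $F^{a,\mathcal{K}}_{\gamma,\sqrt{\gamma}}(X)$ recorded in \cref{sec:relaxed} (its $\gamma$-derivative is nonnegative), so that passing to $\gamma^{(i)} \leq \gamma^{(i-1)}$ cannot increase the value.

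The one genuinely delicate point, which I would state explicitly, is that the iterates $X^{(i)}$ need \emph{not} be feasible for the affine constraint — the relaxation replaces $\mathcal{L}(X)=y$ by a penalty — so the data residual cannot be discarded as it effectively was in \cref{declinelemmatensor}. It must instead be retained identically across the chain, which is precisely what permits links $(1)$ and $(3)$ to go through, since that term is independent of the weights $W$. Everything else is a transcription of the matrix-case argument of \cite{Kr21_Asy}; the only new ingredients are the subspace restriction (absorbed into link $(2)$ via $X^{(i-1)} \in \mathcal{T}_{i-1}$) and the $\mathcal{S}$-invariance of the relaxed functional established in the first paragraph, which together deliver the inequality for all $\mathcal{S} \subset \mathcal{K}$ at once.
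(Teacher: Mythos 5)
Your proof is correct and is essentially the paper's own argument: the paper's proof simply asserts that steps $(a)$--$(g)$ of \cref{declinelemmatensor} part $(i)$ carry over to the relaxed functional, and your chain $(1)$--$(4)$ is precisely that transcription — $(1)$ and $(3)$ playing the roles of $(b)$ and $(d)$--$(e)$, $(2)$ the role of $(c)$ with the subspace admissibility $X^{(i-1)} \in \mathcal{T}_{i-1}$ replacing unconstrained optimality, and $(4)$ the role of $(g)$ via the monotonicity recorded in \cref{sec:relaxed}. Your explicit observation that $F^{a,\mathcal{K}^{\mathcal{S}}}_\gamma = F^{a,\mathcal{K}}_\gamma$ holds exactly (so that the constants $\Delta^{(i)}$ of steps $(a)$/$(f)$ vanish in the relaxed setting, and the data term rides along unchanged because it is independent of $W$) is exactly the detail the paper leaves implicit in its one-line reference.
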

\begin{proof}
 The argumentation is the same as in \cref{declinelemmatensor} part $(i)$ as steps $(a)$ to $(g)$ analogously hold true (cf. \cref{sec:relaxed}).
\end{proof}

\section{Hierarchical decomposition}\label{sec:reloptHT}
%
%% SUMS AND PRODUCTS %%
\def\mlcr{\\}
\def\deltatensor{\delta}
\def\AJtensor{A^{({\hat{J}})}_{\alpha_c',\{{\beta^e}'\}_{e \in E_c};\, \alpha_c,\{\beta^e\}_{e \in E_c}}}
\def\MJmatrix{M^{(\hat{J})}_{{\beta^{e_1}}',\beta^{e_1}}}
\def\Nvprime#1{(N_{#1})_{\alpha_{#1}, \{{\beta^e}'\}_{e \in E_{#1}}}}
\def\Nvtensor#1{(N_{#1})_{\alpha_{#1}, \{{\beta^e}\}_{e \in E_{#1}}}}
\def\HJmatrix#1#2{H^{(#1)}_{{\beta^{#2}}',\beta^{#2}}}
\def\HJinv#1#2{(H^{(#1)} + \gamma I)^{-1}_{{\beta^{#2}}',\beta^{#2}}}
\def\GJprime{G^{({\hat{J}})}_{\{\alpha_{J_e}\}_{e \in E_Y},\alpha_c',\{{\beta^e}'\}_{e \in E_c}}}
\def\GJtensor{G^{({\hat{J}})}_{\{\alpha_{J_e}\}_{e \in E_Y},\alpha_c,\{\beta^e\}_{e \in E_c}}}
\def\WJmatrix{W^{({\hat{J}})}_{\alpha_{{\hat{J}}}',\alpha_{{\hat{J}}}}}
\def\YJmatrix#1{Y^{(J_{#1})}_{\alpha_{J_{#1}}, \beta^{#1}}}
\def\PJprime{P^{(\hat{J})}_{\{\alpha_v\}_{v \in p},\{{\beta^{e}}'\}_{e \in \partial{E}_p}}}
\def\PJtensor{P^{(\hat{J})}_{\{\alpha_v\}_{v \in p},\{{\beta^{e}}\}_{e \in \partial{E}_p}}}
\def\PcJprime{P^{(+c,\hat{J})}_{\alpha_p,\{{\beta^e}'\}_{e \in E_Y}}}
\def\PcJtensor{P^{(+c,\hat{J})}_{\alpha_p,\{\beta^e\}_{e \in E_Y}}}
\def\BJ#1#2{B^{(J_{#1})}_{{\beta^{#2}}',\beta^{#2}}}
\def\BJtilde#1#2{\widetilde{B}^{(J_{#1})}_{{\beta^{#2}}',\beta^{#2}}}
\def\netprod{\prod}
\def\netsum#1{\sum_{#1}}
We briefly reintroduce hierarchical
tensor decompositions \cite{Gr10_Hie} as tensor tree networks with reference to the introductory \cref{sec:datasparseintro}.
For further reading, we recommend \cite{GrKrTo13_Ali,No17_Low,FaHaNo18_Tre,Gr10_Hie,KoBa09_Ten,La09_Asu,Kr20_Tre}. 
\subsection{Notational deviation}\label{sec:notdev}
In the following, $G = (V,E)$ denotes a tree graph with vertices $V \supseteq [d]$ and edges $E \subseteq \{ \{v,w\} \mid v\neq w \in V\}$.
Due to the complex description of general tensor (tree) networks, we 
require a certain minimum of notational deviation.
That is, we dismiss the order of modes when indexing tensors.
Instead, in order to avoid ambiguity, each specific object is consistently referenced 
with the same, distinctly assigned \textit{labels}, based on the graph $G = (V,E)$. The first group is given by
$\alpha_S = \{\alpha_\mu\}_{\mu \in S}$, for $\alpha_S \in [n_S]$,
$n_S = \prod_{\mu \in S} n_\mu$, $S \subseteq V$. 
We set $n_\mu = 1$ for $\mu > d$, but
any such $\alpha_\mu$ is only denoted when required for notational simplicity.
Further, the second group is given by $\beta = \{\beta^e\}_{e \in E}$ with $\beta^e \in [r^{(J_e)}]$, $J_e \in \mathcal{K}$ (see \cref{sec:KtoG}), whereas the measurement index is denoted by $\zeta \in [\ell]$.
For each such label, we correspondingly define the spaces
\begin{align*}
 \mathfrak{H}_{\alpha_\mu} := \R^{[n_\mu]},\ v \in V, \quad \mathfrak{H}_{\beta^e} := \R^{[r^{(J_e)}]},\ e \in E, \quad \mathfrak{H}_\zeta := \R^{[\ell]}.
\end{align*}
The entirety of labels is formally required to be ordered, but the exact ordering is irrelevant. To each collection $\Gamma$ of such labels, we consequently assign the space 
\begin{align}\label{HGamma}
 \mathfrak{H}_\Gamma := \bigotimes_{\gamma \in \Gamma} \mathfrak{H}_\gamma.
\end{align}
Some, in particular labels corresponding to edges also appear as unequally treated, so called \textit{primed labels} ${\beta^e}' \neq \beta^e$, $e \in E$. Each is however still thought to refer to the same, implicitly declared positions of its unprimed twin.
Throughout this section, it shall become apparent that it is in fact mostly redundant to explicitly denote these labels. While we nevertheless here hold on to indices, \cref{app:sec:reloptHT} does make use of this fact to more compactly repeat some of following statements and lay out their proofs.
What is here introduced as notation, is the basis to the formalized arithmetic introduced in \cite{Kr20_Tre}. For the \textsc{Matlab} toolbox that realizes the latter through automated contractions, on which the implementation of \AandIRLS{}-$0\mathcal{K}$ is based on, please contact the author.
\subsection{Graph notation}\label{sec:graphtheory}
We denote each the path from excluding $c \in V$ to excluding $v \in V \setminus \{v\}$ within a tree $G = (V,E)$ as the unique ordered set
\begin{align}\label{eq:pathdef}
 c \mathring{\rightharpoondown} v := (p_1,\ldots,p_{-1}) = p \subset V,
\end{align}
for which $\{c,p_1\} \in E$, $\{p_i,p_{i+1}\} \in E$, $i = 1,\ldots,|p|-1$
as well as $\{p_{-1},v\} \in E$. We further define the neighbors of $v \in V$, as well as the predecessor and set of descendants of $v \in V \setminus \{c\}$ relative to $c \in V$ as
\begin{align*}
\mathrm{neigh}(v) := \{ h \in V \mid \{ h,v \} \in E \}, \quad
 \mathrm{pred}_c(v) := p_{-1}, \quad \mathrm{desc}_c(v) := \mathrm{neigh}(v) \setminus \{p_{-1}\}.
\end{align*}
We define the branches relative to $c \in V$ as
\begin{align*}
 \mathrm{branch}_c(v) := \{v\} \cup \{ b \in V \setminus \{c,v\} \mid v \in c \mathring{\rightharpoondown} b \}.
\end{align*}
Each root $c \in V$ splits the graph into the multiple connected components of $V \setminus \{c\}$,
\begin{align*}
 \dot{\bigcup}_{h \in \mathrm{neigh}(c)} \mathrm{branch}_c(h) = V \setminus \{c\}.
\end{align*}
For any $v \neq w \in V$, we further define the sets $J_w(v) := \mathrm{branch}_w(v) \cap [d]$.
Thus if $e = \{v,w\} \in E$ is an edge, then $J_w(v)\ \dot{\cup}\ J_v(w) = [d]$. 
\subsection{Tree corresponding to hierarchical family}\label{sec:KtoG}
Without loss of generality, we from here on postulate that hierarchical families $\mathcal{K}$ (cf. \cref{sec:datasparseintro}) are by definition also \textit{dimension separating}. That is, we assume that there does not exist a map $\pi: [d] \rightarrow [d-1]$, for which $\pi(J) \notin \{ \pi(\hat{J}),\, [d-1] \setminus \pi(\hat{J}) \}$ for all $J,\hat{J} \in \mathcal{K}$.
\begin{lemma}\label{Kcorlem}
 Each (dimension separating) hierarchical family $\mathcal{K}$ defines an, up to equivalence, unique tree $G_\mathcal{K} = (V,E)$, $V \supseteq [d]$ and root $c \in V$, for which $|E| = |V| - 1 = |\mathcal{K}|$ and $\mathcal{K} = \{ J_c(v) \}_{v \in V \setminus \{c\}}$
 ---
 and vice versa.
\end{lemma}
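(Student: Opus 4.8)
The plan is to read the claim as an explicit bijection and to prove both directions, using as a backbone the classical fact that a family of subsets closed under the nested-or-disjoint relation (a laminar family) is encoded by the ancestor order of a rooted tree. The hierarchical condition \cref{eq:hiercond} is precisely laminarity together with the requirement $J \neq S^{\mathsf{c}}$, and I would argue that the latter is what forces the realizing tree to be rooted in a consistent way, while the dimension-separating assumption is what makes the tree reduced and hence unique. Throughout I use the graph-theoretic notation of \cref{sec:graphtheory}.

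For the direction from trees to families (the \emph{vice versa}), I would start from a reduced tree $G = (V,E)$ with $[d] \subseteq V$ and a root $c$, and set $\mathcal{K} := \{ J_c(v) \}_{v \in V \setminus \{c\}}$. Each non-root vertex $v$ carries the unique edge to $\mathrm{pred}_c(v)$, and removing this edge bipartitions the dimension set $[d]$ into $J_c(v)$ and its complement. Since distinct edges of a tree induce distinct bipartitions, the map $v \mapsto J_c(v)$ is injective, which yields $|\mathcal{K}| = |V| - 1 = |E|$. Laminarity of $\mathcal{K}$ is immediate from the nesting of branches ($\mathrm{branch}_c(v)$ and $\mathrm{branch}_c(w)$ are nested or disjoint); $J \neq S^{\mathsf{c}}$ follows because two distinct edges never give the same unordered bipartition; and the dimension-separating property holds because any two vertices of $[d]$ are separated by any edge on the path between them. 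The relevant reducedness (every leaf lies in $[d]$, and every vertex outside $[d]$ has degree $\geq 3$) is exactly what guarantees injectivity of $v \mapsto J_c(v)$, and thus singles out the class of trees on which the correspondence is a bijection.

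For the harder direction, from families to trees, I would reconstruct $(G_\mathcal{K}, c)$ from the containment poset $(\mathcal{K}, \subseteq)$, which is a forest order by laminarity. The key first step is to locate the root: let $M_1, \ldots, M_k$ be the $\subseteq$-maximal elements of $\mathcal{K}$ (pairwise disjoint by laminarity) and $U := \bigcup_i M_i$. Because every $J \in \mathcal{K}$ lies below some $M_i$, no two elements of $[d] \setminus U$ are separated by any $J$, so the dimension-separating hypothesis forces $|[d] \setminus U| \leq 1$. This determines $c$ as the unique leftover dimension when $[d] \setminus U = \{c\}$, and as a fresh internal vertex when $U = [d]$. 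I would then attach subtrees recursively: each $M_i$ is the branch set $J_c(v_i)$ of a child $v_i$ of $c$, and applying the same leftover argument to $\{ J \in \mathcal{K} : J \subsetneq M_i \}$ inside $M_i$ decides whether $v_i$ itself carries a dimension (the unique uncovered element of $M_i$) or is a new internal vertex, and fixes its descendants. Uniqueness up to equivalence then follows because the poset, its maximal elements, and the leftover element at every level are invariants of $\mathcal{K}$ alone, so any two realizing trees are related by a label- and root-preserving isomorphism.

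I expect the main obstacle to be the recursive reconstruction in the previous paragraph, and specifically the bookkeeping showing that at every node the children's branch sets cover that node's dimension set up to at most one element. This is the step where the dimension-separating hypothesis is genuinely used, since it rules out two or more dimensions sitting unseparated at a common position, and where one must check that the inserted internal vertices together with the dimension vertices yield a tree with $|E| = |V| - 1 = |\mathcal{K}|$ that realizes exactly $\mathcal{K}$ and nothing more. The condition $J \neq S^{\mathsf{c}}$ must be threaded through to ensure that no two elements of $\mathcal{K}$ collapse onto the same edge, keeping the orientation toward $c$ consistent.
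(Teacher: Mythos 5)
Your proposal is sound, but there is nothing in the paper to compare it against step by step: the paper does not prove \cref{Kcorlem} at all, its ``proof'' being a citation of \cite{Kr20_Tre,Gr10_Hie}, where the correspondence between dimension-separating hierarchical families and rooted dimension trees is worked out. What you have written is a self-contained reconstruction of essentially that standard argument, and its key ideas are correct: the containment poset of the laminar family $\mathcal{K}$ is realized as the ancestor order of a rooted tree; the dimension-separating hypothesis bounds the number of uncovered dimensions at each level by one, which decides whether a node is a dimension vertex or a fresh internal vertex; and the condition $J \neq S^{\mathsf{c}}$ rules out a degree-two internal root, which is exactly what makes the root and orientation well defined (note that at deeper levels a fresh internal vertex with only two children is harmless, since the parent edge gives it degree three, so $J \neq S^{\mathsf{c}}$ is genuinely a root-level condition). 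Your identification of the reducedness class (leaves in $[d]$, vertices outside $[d]$ of degree at least $3$) as the class on which $v \mapsto J_c(v)$ is injective is also the right one. Two points would need tightening in a full write-up: first, inside a maximal element $M_i$ the ``same leftover argument'' requires one extra case --- a set $J$ containing an uncovered element of $M_i$ is, by laminarity, either $M_i$ itself or a superset of $M_i$, and in either case contains all uncovered elements, hence never separates them; second, the claim that distinct edges induce distinct (unordered) bipartitions of $[d]$ is false for general trees and holds only on the reduced class, so it should be invoked after, not before, the reducedness assumption. What your route buys over the paper's citation is transparency about exactly where each of the three hypotheses (laminarity, $J \neq S^{\mathsf{c}}$, dimension separation) is used; what the citation buys is brevity, with the bookkeeping you flag as the main obstacle delegated to the references.
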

\begin{proof}
 See for example \cite{Kr20_Tre,Gr10_Hie}.
\end{proof}
\begin{definition}
 Let $G_\mathcal{K}$ correspond to the hierarchical family $\mathcal{K}$.
 We define $J_e \in \{J_w(v),J_v(w)\}$, $e = \{v,w\} \in E$, as each the one set that is contained in $\mathcal{K}$.
\end{definition}
This convention implies a bijection $\mathcal{K} = \{ J_e \mid e \in E\}$ to $E$.
The simple graph that corresponds to the matrix case $\mathcal{K}_2 = \{\{1\}\}$ for $d = 2$ is for instance given by
the tree 
\begin{align*}
    G_{\mathcal{K}_2} = ( V, E ),\quad V = \{1,2\},\quad E = \{\{1,2\}\}, 
\end{align*}
whereby $J_{\{1,2\}} = \{ 1 \}$.
For $\mathcal{K}_{\mathrm{Tucker}} = \{\{1\},\ldots,\{d\}\}$ (cf. \cref{example:Tucker}), we have
 \begin{align}\label{Tuckergraph}
  G_{\mathcal{K}_{\mathrm{Tucker}}} = (V,E), \quad V = \{1,\ldots,d+1\}, \quad E = \{ \{1,d+1\},\ldots,\{d,d+1\} \},
 \end{align}
 and $J_{\{\mu,d+1\}} = \{ \mu \}$, $\mu \in [d]$.
 As required later, for subsets $S \subset V$, we further define
 \begin{align}
 E_S & := \{ \{ v,w \} \subset E \mid v \in S, \ w \in \mathrm{neigh}(v) \}, \label{ES} \\
 \mathring{E}_S & := \{ \{ v,w \} \subset E \mid v,w \in S \}, \ \partial{E}_S := E_S \setminus \mathring{E}_S. \nonumber
 \end{align}
For $S = \{v\}$, $v \in V$, we may skip set brackets. Thus, $E_v = \{ \{v,h\} \}_{h \in \mathrm{neigh}(v)}$.

\subsection{Representation map corresponding to tree}\label{sec:repmap}
Whereas each hierarchical family $\mathcal{K}$ defines a tree $G_\mathcal{K} = (V,E)$, 
each such (not necessarily rooted) graph together with $r \in \N^\mathcal{K}$ in turn defines
a certain data space $\mathcal{D}_r$
and a representation map $\tau_r: \mathcal{D}_r \rightarrow \R^{n_1 \times \ldots \times n_d}$ for values $r \in \N^\mathcal{K}$.
\begin{definition}\label{def:mv}
With reference to \cref{sec:notdev}, let
\begin{align*}
 \mathcal{D}_r := \bigtimes_{v \in V} \mathfrak{H}_{\mathfrak{m}_v},
 \quad \mathfrak{m}_v := \{ \beta^e \}_{e \in E_v} \cup 
 \begin{cases}
     \{\alpha_v\} & \mbox{if } v \in [d], \\
    \emptyset & \mbox{otherwise}.
\end{cases}  
\end{align*}
\end{definition}
The dimension of each node $N_v \in \mathfrak{H}_{\mathfrak{m}_v}$, $\{N_v\}_{v \in V} \in \mathcal{D}_r$, is thus the degree of $v \in V$, plus one if $v \in [d]$.
The representation map $\tau_r$ is now defined as the map that proceeds each
a contraction over modes with common labels.
With the notation declared in \cref{sec:notdev}, we may write
\begin{align}\label{eq:taur}
 \tau_r(N)_{\alpha_1,\ldots,\alpha_d} := \sum_{\beta^e \in E} \prod_{\mu \in [d]} \Nvtensor{v} \prod_{v \in V \setminus [d]} (N_v)_{\{\beta^e\}_{e \in E_v}},
\end{align}
where $\alpha_\mu \in [n_\mu]$, $\mu \in [d]$.
\begin{example}
 In the matrix case with $r = r^{(J_{\{1,2\}})} \in \N$, we simply have an ordinary matrix multiplication (cf. \cref{sec:datasparseintro})
$\tau_r(Y,Z)_{\alpha_1,\alpha_2} = \sum_{\beta = 1}^{r} Y_{\alpha_1,\beta} Z_{\beta,\alpha_2}$, where the summation ranges over $\alpha_1 \in [n_1]$ and $\alpha_2 \in [n_2]$.
Here, $\beta = \beta^{\{1,2\}} \in [r]$ is the label assigned to the only edge.
\end{example}
\begin{example}\label{example:Tucker}
 For $d \in \N$, the Tucker format \cite{Tucker66_Som} or MLSVD\footnote{subject to further orthonormality constraints (cf. \cref{rootedtrees})} \cite{LaMoVa00_AMu} is defined through the graph $\mathcal{K}_{\mathrm{Tucker}}$ \cref{Tuckergraph}
 and consists of the components $\{N_\mu\}_{\mu = 1}^{d+1} \in \mathcal{D}_r$ of sizes $N_\mu \in \R^{n_\mu \times r^{(J_{\{\mu,d+1\}})}}$
 and $N_{d+1} \in \R^{r^{(J_{\{1,d+1\}})} \times \ldots \times r^{(J_{\{d,d+1\}})}}$.
 The corresponding contraction map is given by (though less convenient when written out in particular cases)
 \begin{multline*}
  X_{\alpha_1,\ldots,\alpha_d} = \tau_r(N_1,\ldots,N_d,N_{d+1})_{\alpha_1,\ldots,\alpha_d} \\
= \sum_{\beta^{\{1,d+1\}} = 1}^{r^{(J_{\{1,d+1\}})}} \ldots \sum_{\beta^{\{d,d+1\}} = 1}^{r^{(J_{\{d,d+1\}})}} (N_1)_{\alpha_1,\beta^{\{1,d+1\}}} \ldots (N_d)_{\alpha_d,\beta^{\{d,d+1\}}} (N_{d+1})_{\beta^{\{1,d+1\}},\ldots,\beta^{\{d,d+1\}}},
 \end{multline*}
 for $\alpha_\mu = 1,\ldots,n_\mu$, $\mu = 1,\ldots,d$ as visualized in \cref{Tucker_graph}.
\tikzfigure{htb}{Tucker_graph}{\label{Tucker_graph}{\normalfont[Left]} The contraction diagram for the Tucker representation in \cref{example:Tucker} for $d = 4$.
The dotted line indicates the part which for $J = \{1\}$ yields $Z^{(J)}$,
whereas $Y^{(J)} = N_1$ (cf. \cref{YJZJ}). {\normalfont[Right]} A balanced binary hierarchical Tucker (HT) representation (cf. \cref{sec:exhaustive}) for the exhaustive family $\mathcal{K} = \{\{1,2\},\{1\},\ldots,\{4\}\}$, $Y^{(\{1,2\})} = \tau_r(\{N_1,N_2,\widetilde{N}_{d+1}\})$ (cf. \cref{eq:partcont}). In contrast to conventional literature (cf. \cite{Gr10_Hie}), the root node has been omitted as it is redundant here (cf. \cite{Kr20_Tre}).}%
\end{example}
While initially defined on the whole network, we can also extend the map $\tau_r$
to contract nodes over any subset $S \subset V$ via
\begin{align}\label{eq:partcont}
 \tau_r(\{N_s\}_{s \in S})_{\{\alpha_s\}_{s \in S},\{\beta^e\}_{e \in \partial E_S}}               := \sum_{\beta^e\,:\,e \in \mathring{E}_S} \prod_{v \in S} \Nvtensor{v},
\end{align}
for $\alpha_s \in [n_s], \ s \in S$, and $\partial E_S$ as well as $\mathring{E}_S$ as defined by \cref{ES}. Here, some $\alpha_v$, that is for $v > d$, are redundant (cf. \cref{sec:notdev}).
\subsection{Decomposition theorem}
The following theorem is fundamental to hierarchical tensor approximation theory.
\begin{theorem}[\mbox{\cite{Gr10_Hie}}]\label{reprthm}
 Let $G_\mathcal{K}$ be the tree corresponding to the hierarchical family $\mathcal{K}$ (cf. \cref{Kcorlem}).
 Then for each $r = \{r^{(J)}\}_{J \in \mathcal{K}} \in \N^\mathcal{K}$,
 the according multilinear representation map $\tau_r: \mathcal{D}_r \rightarrow \R^{n_1 \times \ldots \times n_d}$
 is non-injective with $\mathrm{image}(\tau_r) = V^\mathcal{K}_{\leq r}$.
\end{theorem}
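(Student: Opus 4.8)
The plan is to establish the two inclusions $\mathrm{image}(\tau_r) \subseteq V^\mathcal{K}_{\leq r}$ and $V^\mathcal{K}_{\leq r} \subseteq \mathrm{image}(\tau_r)$ separately, rooting the tree $G_\mathcal{K}$ at $c$ throughout, and to settle non-injectivity via the gauge freedom of the network. For the first inclusion, fix $X = \tau_r(N)$ and an arbitrary $J \in \mathcal{K}$, so that $J = J_e$ for a unique edge $e = \{v,w\} \in E$ with $J_w(v)\ \dot{\cup}\ J_v(w) = [d]$; without loss of generality $J = J_w(v)$. Removing $e$ splits $V$ into the vertex sets $S := \mathrm{branch}_w(v)$ and $S^{\mathsf{c}} := \mathrm{branch}_v(w)$ whose only connecting edge is $e$. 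Applying the partial contraction \cref{eq:partcont} to each of these sets yields two arrays that reshape to matrices $Y \in \R^{n_J \times r^{(J_e)}}$ and $Z \in \R^{n_{J^{\mathsf{c}}} \times r^{(J_e)}}$, and since $e$ carries their only shared label, the full contraction factors as $X^{[J]} = Y Z^T$. Hence $\rank(X^{[J]}) \leq r^{(J_e)} = r^{(J)}$, and as $J$ was arbitrary, $X \in V^\mathcal{K}_{\leq r}$.

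For the reverse inclusion I would construct node tensors for a given $X \in V^\mathcal{K}_{\leq r}$ by a leaves-to-root sweep. For each $v \neq c$ let $e$ be the edge joining $v$ to $\mathrm{pred}_c(v)$, so $J_c(v) = J_e$, and set $U_v := \mathrm{range}(X^{[J_c(v)]}) \subseteq \R^{n_{J_c(v)}}$; the hypothesis gives $\dim U_v = \rank(X^{[J_c(v)]}) \leq r^{(J_e)}$. Under the canonical identification $\R^{n_{J_c(v)}} \cong \bigotimes_{\mu \in J_c(v)} \R^{n_\mu}$, the key structural claim is the nestedness inclusion
\[
 U_v \ \subseteq\ \Big( \bigotimes_{h \in \mathrm{desc}_c(v)} U_h \Big) \otimes \R^{n_v},
\]
where, using the convention $n_v = 1$ for $v \notin [d]$, the trailing physical factor is nontrivial exactly for the dimension vertices. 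Granting this, I fix a basis of each $U_v$ (for definiteness, the left singular vectors of $X^{[J_c(v)]}$, padded by zero vectors to exactly $r^{(J_e)}$ columns), indexed by the up-edge label $\beta^e \in [r^{(J_e)}]$, and expand each basis vector in the tensor products of the children bases, indexed by the down-edge labels $\beta^{\{v,h\}}$, $h \in \mathrm{desc}_c(v)$, together with the physical basis indexed by $\alpha_v$ when $v \in [d]$. Since the up-edge and the down-edges exhaust the incident edge set $E_v$, the resulting coefficient array carries exactly the legs $\mathfrak{m}_v$ of \cref{def:mv} and is taken as $N_v$; the root node $N_c$ is the coefficient array of $X$ itself expanded in the bases of its children. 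A contraction of these nodes telescopes the nested basis expansions and returns $X$, as verified by induction from the leaves, whence $X = \tau_r(N) \in \mathrm{image}(\tau_r)$.

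The nestedness inclusion is the crux and the principal obstacle. Its proof rests on the identity $J_c(v) = \dot{\bigcup}_{h \in \mathrm{desc}_c(v)} J_c(h)$ (together with $\{v\}$ when $v \in [d]$), which is immediate from $\mathrm{branch}_c(v) = \{v\} \cup \dot{\bigcup}_{h} \mathrm{branch}_c(h)$ upon intersecting with $[d]$. This disjoint union is precisely what the hierarchical condition \cref{eq:hiercond} guarantees and is where it is indispensable: it lets the physical index space factor canonically as $\R^{n_{J_c(v)}} \cong \bigotimes_h \R^{n_{J_c(h)}} \otimes \R^{n_v}$ along the children. Each column of $X^{[J_c(v)]}$ is a partial evaluation of $X$ that, after regrouping the very same entries, is simultaneously a linear combination of columns of each child matricization $X^{[J_c(h)]}$; hence its $h$-th tensor factor lies in $U_h$, giving the claimed inclusion. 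I expect the careful bookkeeping of these simultaneous regroupings — formally, that fixing the indices outside $J_c(h)$ and varying those inside produces a column of $X^{[J_c(h)]}$ — to be the most delicate part, and the one where the tree (as opposed to merely nested) structure of $\mathcal{K}$ is genuinely used.

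Non-injectivity is then immediate. Since $|E| = |\mathcal{K}| \geq 1$, choose any edge $e$ with $r^{(J_e)} \geq 1$ and any invertible $G \in \R^{r^{(J_e)} \times r^{(J_e)}}$; contracting $G$ into the $\beta^e$-leg of the node on one side of $e$ and $G^{-1}$ into the $\beta^e$-leg of the node on the other side leaves the full contraction $\tau_r(N)$ invariant while changing $N$. Already a nontrivial scaling $G = \lambda I$ with $\lambda \neq 1$ produces a distinct preimage, so $\tau_r$ cannot be injective.
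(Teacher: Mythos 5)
Your proposal is correct in substance, but it is worth noting that the paper itself does not prove \cref{reprthm} at all: the theorem is stated with attribution to the cited literature, and the only argument the paper offers is the remark following \cref{YJZJ} that the inclusion $\mathrm{image}(\tau_r) \subseteq V^\mathcal{K}_{\leq r}$ is easy (via exactly the edge-splitting factorization $X^{[J]} = Y^{(J)}Z^{(J)}$ you give), while ``the other direction requires some more work'' and is deferred to the references. Your proposal therefore supplies precisely the content the paper outsources, and it does so along the same lines as the cited source: the converse inclusion via nestedness of the minimal subspaces $U_v = \mathrm{range}(X^{[J_c(v)]})$ together with a leaves-to-root construction of the nodes, and non-injectivity via the gauge freedom $(G, G^{-1})$ on an edge. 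What your write-up buys is self-containedness; what the paper's citation buys is brevity for a classical result. Two points in your sketch deserve tightening if it were to stand as a full proof. First, in the nestedness step, a column of $X^{[J_c(v)]}$ is not an elementary tensor, so the phrase ``its $h$-th tensor factor lies in $U_h$'' is not literally meaningful; the correct statement is that the column lies, for each $h \in \mathrm{desc}_c(v)$ separately, in $\R^{n_{J_c(h_1)}} \otimes \cdots \otimes U_h \otimes \cdots \otimes \R^{n_v}$, and one then needs the (standard, but nontrivial) intersection identity
\begin{align*}
\bigcap_{h \in \mathrm{desc}_c(v)} \Big( \R^{n_{J_c(h_1)}} \otimes \cdots \otimes U_h \otimes \cdots \otimes \R^{n_v} \Big) \;=\; \Big( \bigotimes_{h \in \mathrm{desc}_c(v)} U_h \Big) \otimes \R^{n_v}
\end{align*}
to conclude the claimed inclusion. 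Second, the gauge argument for non-injectivity requires a preimage whose node at the chosen edge is nonzero (otherwise $\lambda N_v = N_v$); this is immediate by picking any $N$ representing a nonzero tensor, since then no node can vanish, but it should be said.
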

In other words, for each tensor $X \in \R^{n_1 \times \ldots \times n_d}$ with $\mathrm{rank}(X^{[J]}) \leq r^{(J)}$, $J \in \mathcal{K}$,
there exists a (non-unique) decomposition $N \in \mathcal{D}_r$ with $X = \tau_r(N)$.
Each edge $e = \{v,w\} \in E$, assuming $J = J_e = J_w(v) \in \mathcal{K}$, splits the tree into two disconnected subgraphs and yields a corresponding matrix decomposition
\begin{align}\label{YJZJ}
 X^{[J]} = Y^{(J)} Z^{(J)}, \quad Y^{(J)} \in \R^{[n_J] \times r^{(J)}}, \quad Z^{(J)} \in \R^{r^{(J)} \times [n_{J^\mathsf{c}}]}.
\end{align}
The matrices $Y^{(J)}$ and $Z^{(J)}$ are obtained by contractions over each $(N_h)_{h \in \mathrm{branch}_w(v)}$
and $(N_h)_{h \in \mathrm{branch}_v(w)}$, respectively. In explicit, abbreviating $S = \mathrm{branch}_w(v)$, we have
\begin{align*}
 Y^{(J)}_{\alpha_J,\beta^e} = \tau_r(\{N_s\}_{s \in S})_{\{\alpha_\mu\}_{\mu \in J},\beta^e} = \sum_{\beta^e\,:\,e \in \mathring{E}_S} \prod_{v \in S} \Nvtensor{v},
\end{align*}
for $\mathring{E}_S$ as defined in \cref{sec:KtoG}. 
Given \cref{YJZJ}, it is easy to see that indeed $\mathrm{image}(\tau_r) \subseteq V^\mathcal{K}_{\leq r}$,
whereas the other direction requires some more work (cf. \cite{Gr10_Hie,Kr20_Tre}).
\begin{lemma}\label{manidim}
 The dimension of the variety corresponding to a feasible $r \in \N^\mathcal{K}$ for a hierarchical family $\mathcal{K}$ is
 \begin{align*}
  \dim(V^\mathcal{K}_{\leq r}) = \big(\sum_{\mu \in [d]} n_v \prod_{e \in E_\mu} r^{(J_e)}\big) + \big(\sum_{v \in V \setminus [d]} \prod_{e \in E_v} r^{(J_e)}\big) - \sum_{e \in E} (r^{(J_e)})^2,
 \end{align*}
where $G_\mathcal{K} = (V,E)$ is the corresponding graph. The set $V^\mathcal{K}_{= r}$ in turn is a manifold of equal dimension.
\end{lemma}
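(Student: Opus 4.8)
The plan is to compute the dimension of $V^\mathcal{K}_{\leq r}$ by exhibiting the representation map $\tau_r$ from \cref{reprthm} as a parametrization and carefully accounting for its redundancy. First I would note that the total number of parameters in the data space $\mathcal{D}_r = \bigtimes_{v \in V} \mathfrak{H}_{\mathfrak{m}_v}$ is exactly $\big(\sum_{\mu \in [d]} n_\mu \prod_{e \in E_\mu} r^{(J_e)}\big) + \big(\sum_{v \in V \setminus [d]} \prod_{e \in E_v} r^{(J_e)}\big)$, since each $N_v$ has one mode of size $n_\mu$ for each leaf $v = \mu \in [d]$ and one mode of size $r^{(J_e)}$ for every incident edge $e \in E_v$. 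Since $\tau_r$ is surjective onto $V^\mathcal{K}_{\leq r}$ but not injective, the dimension of the image is the parameter count minus the dimension of a generic fiber. The claimed correction term $\sum_{e \in E}(r^{(J_e)})^2$ must therefore be identified precisely with this generic fiber dimension — the gauge freedom of the tree network.

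The key step is to describe this gauge group. For each interior edge $e = \{v,w\} \in E$ one may insert an invertible matrix $A_e \in \mathrm{GL}_{r^{(J_e)}}$ together with its inverse on the two nodes incident to $e$, namely replacing the $\beta^e$-mode of $N_v$ by its contraction with $A_e$ and the $\beta^e$-mode of $N_w$ by its contraction with $A_e^{-1}$, without changing $\tau_r(N)$. This yields an action of $\prod_{e \in E} \mathrm{GL}_{r^{(J_e)}}$ on $\mathcal{D}_r$ leaving $\tau_r$ invariant, contributing $\sum_{e \in E}(r^{(J_e)})^2$ dimensions of redundancy. The plan is to argue that, at a generic point (one where every $Y^{(J)}$ from \cref{YJZJ} has full rank $r^{(J)}$, which exists precisely because $r$ is feasible), this gauge action is locally free and its orbits exhaust the fiber, so the fiber dimension equals exactly $\sum_{e \in E}(r^{(J_e)})^2$. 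The feasibility hypothesis is what guarantees such full-rank points exist, so that the formula reflects the true, attained dimension rather than an overcount.

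The main obstacle I expect is rigorously proving that the generic fiber dimension is exactly $\sum_{e \in E}(r^{(J_e)})^2$ and not larger — that is, that the gauge orbits locally fill out the entire fiber and that the $\mathrm{GL}$-action is (generically) free. Freeness can fail only on a lower-dimensional set, and one must verify that at a point with all $Y^{(J)}$ of full column rank the stabilizer is trivial; this is where the tree structure (rather than a general graph with cycles) is essential, since acyclicity ensures the edge gauges act independently. I would handle the surjectivity of the differential and the orbit-equals-fiber claim by an inductive leaf-pruning argument along the tree: fixing a root $c$ and peeling off branches via the factorization in \cref{YJZJ}, reducing each contraction to a matrix product $Y^{(J)}Z^{(J)}$ whose fiber under multiplication is well understood to be the $\mathrm{GL}_{r^{(J)}}$ gauge. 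The last sentence, that $V^\mathcal{K}_{=r}$ is a manifold of the same dimension, then follows because the full-rank locus in $\mathcal{D}_r$ is open and dense and maps onto $V^\mathcal{K}_{=r}$, and restricting to this locus makes $\tau_r$ a smooth submersion onto its image with constant fiber dimension, so the image is a smooth manifold of the asserted dimension.
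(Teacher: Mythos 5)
Your proposal is correct and takes essentially the same approach as the paper: the paper's own ``proof'' is only a pointer to \cite{HoThRe12_Onm,UsVa13_The}, and the argumentation in those references is precisely what you reconstruct --- counting the parameters of $\mathcal{D}_r$ and subtracting the dimension $\sum_{e \in E}(r^{(J_e)})^2$ of the gauge group $\prod_{e \in E}\mathrm{GL}_{r^{(J_e)}}$ acting by $(A_e,A_e^{-1})$ on the two nodes of each edge, with feasibility guaranteeing that the full-rank locus (where the orbits are free and exhaust the fibers of $\tau_r$) is nonempty. The manifold statement for $V^\mathcal{K}_{=r}$ is obtained there by the same submersion/quotient argument you sketch in your last paragraph.
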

\begin{proof}
 Follows by a generalization of the argumentation in \cite{HoThRe12_Onm,UsVa13_The}\footnote{The rank considered therein is implicitly assumed to be feasible.}.
\end{proof}

\subsection{Exhaustive hierarchical families}\label{sec:exhaustive}
The larger the family $\mathcal{K}$, the more regularizing the \IRLS{} approach. Thus, one may desire such to be exhaustive in the following sense.
\begin{definition}
 Let $\mathcal{K}$ be a hierarchical family. We say $\mathcal{K}$ is exhaustive
 if there does not exist another hierarchical family $\widetilde{\mathcal{K}}$ with $\widetilde{\mathcal{K}} \supsetneq \mathcal{K}$.
\end{definition}

Exhaustive hierarchical families in a certain sense yield particularly data sparse formats as specified in the following \cref{maxlemma}.
For any such family, it further holds $|\mathcal{K}| = 2d - 3 = |E|$ and $|V| = 2d - 2$ (cf. \cref{Kcorlem}).

\begin{lemma}\label{maxlemma}
 Let $\mathcal{K}$ be an exhaustive hierarchical family. Then $G_\mathcal{K}$ consists only of inner vertices $v \in V \setminus [d]$ of
 degree $3$ and leafs $v \in [d] \subset V$ of degree $1$.
\end{lemma}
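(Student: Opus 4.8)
The plan is to prove the contrapositive of each claim: I want to show that if $G_\mathcal{K}$ has an inner vertex of degree other than $3$, or a leaf $\mu \in [d]$ of degree greater than $1$, then $\mathcal{K}$ is not exhaustive, i.e.\ there exists a strictly larger hierarchical family $\widetilde{\mathcal{K}} \supsetneq \mathcal{K}$. The key observation is that via \cref{Kcorlem}, enlarging $\mathcal{K}$ corresponds to refining the tree $G_\mathcal{K}$ by introducing new edges (equivalently, new sets $J_e$), and this is possible precisely when some vertex has ``too many'' incident edges. Since each edge corresponds bijectively to a member of $\mathcal{K}$, adding a vertex/edge strictly increases $|\mathcal{K}|$.

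First I would handle the inner vertices. Suppose $v \in V \setminus [d]$ has degree $\deg(v) \geq 4$. The idea is to split $v$ into two vertices $v_1, v_2$ joined by a new edge $e' = \{v_1, v_2\}$, distributing the incident edges of $v$ so that $v_1$ receives at least two of them and $v_2$ receives the remaining (again at least two), keeping the graph a tree. This yields a new tree $\widetilde{G}$ with one more vertex and one more edge, hence a family $\widetilde{\mathcal{K}}$ with $|\widetilde{\mathcal{K}}| = |\mathcal{K}| + 1$. I must check two things: that $\widetilde{\mathcal{K}}$ is still hierarchical (satisfies \cref{eq:hiercond}), and that $\widetilde{\mathcal{K}} \supseteq \mathcal{K}$. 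The latter holds because each original $J_e$ for $e \neq$ the split edges is unchanged, and the two edges formerly at $v$ that now attach to $v_1$ versus $v_2$ still induce the same branch sets $J_e$; the new set $J_{e'}$ is the union of the branches moved to one side, which is a genuinely new member. For the hierarchy condition, the new set $J_{e'}$ is nested appropriately with all existing sets because it is a union of existing branches on a tree, and dimension-separation is preserved by construction. A symmetric argument treats $\deg(v) = 2$ (though here I would instead note that a degree-$2$ inner vertex is redundant and can be contracted, but more carefully, the exhaustiveness direction needs the degree-$\geq 4$ split; a degree-$2$ inner vertex would violate dimension-separation, since it could be absorbed, so by the standing convention such vertices do not occur).

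Next I would handle the leaves. Suppose $\mu \in [d] \subset V$ has degree $\deg(\mu) \geq 2$. Here the construction is to attach a new vertex $w$ to $\mu$ by an edge, moving all but possibly none of $\mu$'s original incident edges onto $w$, so that $\mu$ becomes a leaf (degree $1$, incident only to the new edge $\{\mu, w\}$) and $w$ becomes an inner vertex carrying $\mu$'s former neighbors. Again this produces $\widetilde{G}$ with one extra vertex and edge, hence $\widetilde{\mathcal{K}} \supsetneq \mathcal{K}$, provided $\widetilde{\mathcal{K}}$ remains a valid hierarchical family. The new member $J_{\{\mu,w\}} = \{\mu\}$ (or its appropriate branch) is nested inside all sets that previously contained $\mu$, so \cref{eq:hiercond} is maintained.

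The main obstacle I expect is the bookkeeping to verify rigorously that each splitting operation yields a family that still satisfies the full hierarchy condition \cref{eq:hiercond} including the clause $J \neq S^{\mathsf{c}}$, together with dimension-separation, so that \cref{Kcorlem} genuinely applies to the enlarged family and guarantees $\widetilde{\mathcal{K}} \supsetneq \mathcal{K}$ rather than merely an equivalent relabeling. Concretely, one must confirm that the newly created set $J_{e'}$ is distinct from every existing member and from every complement of an existing member; this is where the correspondence between tree topology and set-nestedness must be invoked carefully, and I would lean on the fact (from \cref{Kcorlem,sec:graphtheory}) that distinct edges in a tree induce distinct bipartitions $J_e\, \dot\cup\, J_e^{\mathsf{c}} = [d]$. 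Once that correspondence is in hand, both cases reduce to the purely graph-theoretic statement that a tree admits a non-trivial edge-subdivision-by-splitting exactly when it has a vertex of the forbidden degree, which is elementary.
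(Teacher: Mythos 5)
The paper does not actually prove this lemma internally --- its ``proof'' is a pointer to \cite{Gr10_Hie,Kr20_Tre} --- so your self-contained splitting argument is a genuinely different (and more informative) route, and its core is sound: it is the standard argument. Splitting a degree-$\geq 4$ inner vertex into two vertices joined by a new edge, and pushing a degree-$\geq 2$ leaf $\mu \in [d]$ onto a freshly attached inner vertex so that the new edge carries $J = \{\mu\}$, each produce a tree whose edge set induces the old family plus one strictly new member; laminarity of edge-induced bipartitions in a tree gives the nestedness part of \cref{eq:hiercond}, and enlarging the family can only help dimension separation. What your sketch buys over the paper's citation is an explicit reduction to elementary tree combinatorics via \cref{Kcorlem}.

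Two patches are needed before this is airtight. First, you never address inner vertices of degree $1$: such a vertex $u \in V \setminus [d]$ would give $J_c(u) = \emptyset \notin \mathcal{K}$, contradicting \cref{Kcorlem}, so they cannot occur --- a one-line case, but the lemma's claim ``degree $3$'' requires excluding degrees $1$ and $2$, not only $\geq 4$. Second, your exclusion of degree-$2$ inner vertices is misattributed: it does not follow from dimension separation (one can draw a dimension-separating example, e.g.\ subdividing one edge of the star for $d=3$, where all dimensions are still separated). The correct reason is that the two edges meeting at a degree-$2$ vertex $a \notin [d]$ induce the same bipartition of $[d]$, so the corresponding sets are either equal --- contradicting the bijection $|\mathcal{K}| = |E|$ in \cref{Kcorlem} --- or complementary, violating the clause $J \neq S^{\mathsf{c}}$ in \cref{eq:hiercond}. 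Relatedly, you should state explicitly the fact you lean on throughout, namely that every leaf of $G_\mathcal{K}$ lies in $[d]$ (again because a leaf outside $[d]$ would induce the empty set); this is what guarantees that each branch at the split vertex meets $[d]$, hence that the bipartition induced by the new edge differs from, and is not complementary to, every existing one.
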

\begin{proof}
 See for instance \cite{Gr10_Hie,Kr20_Tre}.
\end{proof}

The Tucker family $\mathcal{K}_{\mathrm{Tucker}}$ for example is not exhaustive (for $d \geq 4$). The degree of the
vertex $d + 1 \in V$ is $d$, whereby the dimension of the node $N_{d+1}$ is $d$ as well.
For $d = 4$, all exhaustive families are equivalent (up to permutation of modes) to $\mathcal{K} = \{\{1,2\},\{1\},\{2\},\{3\},\{4\}\}$ (see \cref{Tucker_graph}).
In general, exhaustive hierarchical families
correspond to so called binary hierarchical Tucker formats (cf. \cite{Gr10_Hie,Kr20_Tre}).

\subsection{Rooted trees and orthonormalization}\label{rootedtrees}
A root $c \in V$, if at all, may be chosen freely, leading us back to the choice of complementary weights in \cref{sec:complwe}.
\begin{lemma}\label{Svlem}
 For each $c \in V$, there exists a unique subset $\mathcal{S}_c \subset \mathcal{K}$
for which $\mathcal{K}^{\mathcal{S}_c} = \{ J_c(v) \mid v \in V \setminus \{c\} \}$ (cf. \cref{sec:graphtheory}).
\end{lemma}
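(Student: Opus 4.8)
We have a tree $G_\mathcal{K} = (V, E)$ corresponding to a hierarchical family $\mathcal{K}$, with a distinguished root $c_0 \in V$ (from \cref{Kcorlem}) so that $\mathcal{K} = \{J_{c_0}(v) \mid v \in V \setminus \{c_0\}\}$. For the edge $e = \{v,w\}$, by the definition after \cref{Kcorlem} we pick $J_e$ to be whichever of $J_w(v)$, $J_v(w)$ lies in $\mathcal{K}$. The claim is: for *any* other root $c \in V$, there is a unique $\mathcal{S}_c \subset \mathcal{K}$ with $\mathcal{K}^{\mathcal{S}_c} = \{J_c(v) \mid v \in V \setminus \{c\}\}$. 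I need to produce the set of subsets whose corresponding $J_e$ must be complemented when re-rooting at $c$, and show it is the unique such subset.

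**Strategy.** The key combinatorial fact from \cref{sec:graphtheory} is that each edge $e = \{v,w\}$ gives the partition $J_w(v) \,\dot\cup\, J_v(w) = [d]$, so $J_v(w) = J_w(v)^{\mathsf c}$. Hence for each edge the two "oriented" sets are exactly complementary. Re-rooting the tree at $c$ just re-orients each edge to point away from $c$. So the plan is: associate to every edge $e = \{v,w\}$ its unique $J_e \in \mathcal{K}$, then observe that $\{J_c(v)\}_{v \neq c}$ is obtained from $\mathcal{K} = \{J_e\}_{e \in E}$ by replacing $J_e$ with $J_e^{\mathsf c}$ precisely on those edges whose $\mathcal{K}$-orientation disagrees with the orientation away from $c$.

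I would first make the bijection $E \leftrightarrow \mathcal{K}$, $e \mapsto J_e$ (stated after \cref{Kcorlem}) and $E \leftrightarrow \{J_c(v)\}_{v \neq c}$ explicit. For each edge $e = \{v,w\}$ there is, relative to root $c$, a well-defined child endpoint — the one farther from $c$ — and $\{J_c(v)\}_{v \neq c}$ consists exactly of $J_c(\text{child}(e))$ for each $e$. Since $J_c(\text{child})$ and $J_c(\text{parent})$ are complementary, $J_c(\text{child}(e)) \in \{J_e, J_e^{\mathsf c}\}$. Define
\begin{align*}
 \mathcal{S}_c := \{ J_e \in \mathcal{K} \mid J_c(\mathrm{child}(e)) = J_e^{\mathsf c} \}.
\end{align*}
Then by construction $\{J_c(v)\}_{v \neq c} = (\mathcal{K} \setminus \mathcal{S}_c) \cup \{J^{\mathsf c} \mid J \in \mathcal{S}_c\} = \mathcal{K}^{\mathcal{S}_c}$, matching \cref{KS}. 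This gives existence.

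**Uniqueness.** For uniqueness, I would argue that the map $\mathcal{S} \mapsto \mathcal{K}^{\mathcal{S}}$ is injective on subsets of $\mathcal{K}$. This needs that no $J \in \mathcal{K}$ equals $\widehat J^{\mathsf c}$ for any $\widehat J \in \mathcal{K}$ — otherwise complementing could be "undone" by a different choice. But this is exactly the condition $J \neq S^{\mathsf c}$ built into the hierarchy condition \cref{eq:hiercond}. Given that no member of $\mathcal{K}$ is the complement of another member, each target set in $\mathcal{K}^{\mathcal{S}_c}$ unambiguously identifies whether its originating $J_e$ was complemented, so $\mathcal{S}_c$ is forced. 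I would phrase this as: if $\mathcal{K}^{\mathcal{S}} = \mathcal{K}^{\mathcal{S}'}$, then for each $e$ the single element of $\mathcal{K}^{\mathcal{S}}$ indexed by $e$ is the same, and since $J_e \neq J_e^{\mathsf c}$ (as $n_{J_e} < n_{[d]}$, these sets differ) the bit "complemented or not" agrees, giving $\mathcal{S} = \mathcal{S}'$.

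**Main obstacle.** The only genuinely delicate point is making the re-rooting bookkeeping airtight — i.e. that re-indexing $\{J_{c_0}(v)\}_{v \neq c_0}$ as $\{J_c(v)\}_{v \neq c}$ really amounts to edgewise complementation and nothing more, with the child/parent assignment relative to $c$ being well-defined on a tree. This is pure tree combinatorics (each edge lies on a unique path, so each endpoint is unambiguously closer to or farther from $c$), and the content reduces to the complementarity $J_v(w) = J_w(v)^{\mathsf c}$ already recorded in \cref{sec:graphtheory}. I expect the proof to be short, essentially a translation of these observations, and I would cite \cite{Kr20_Tre} for the underlying graph facts.
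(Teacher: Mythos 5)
Your construction is correct and is essentially the paper's own proof: your $\mathcal{S}_c = \{ J_e \in \mathcal{K} \mid J_c(\mathrm{child}(e)) = J_e^{\mathsf{c}} \}$ coincides with the paper's explicit choice $\mathcal{S}_c = \{ J_c(v)^{\mathsf{c}} \mid J_c(v) \notin \mathcal{K},\ v \in V \setminus \{c\}\}$, since $J_c(v) = J_{\mathrm{pred}_c(v)}(v)$ differs from $J_e$ exactly when $J_c(v) \notin \mathcal{K}$. The paper dismisses uniqueness with ``follows directly,'' and your injectivity argument for $\mathcal{S} \mapsto \mathcal{K}^{\mathcal{S}}$ via the condition $J \neq S^{\mathsf{c}}$ in \cref{eq:hiercond} is precisely the justification it leaves implicit.
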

\begin{proof}
 Follows directly with $\mathcal{S}_c = \{ J_c(v)^{\mathsf{c}} \mid J_c(v) \notin \mathcal{K}, \ v \in V \setminus \{c\}\} \subseteq \mathcal{K}$.
\end{proof}
The set equality in \cref{Svlem} implies that for each $J \in \mathcal{K}^{\mathcal{S}_c}$, there is a unique vertex $v =: v_{c,J} \in V \setminus \{c\}$
with $J = J_c(v)$. Note that only the sets $\mathcal{S}_c$, $c \in V$, again lead to hierarchical families $\mathcal{K}^{\mathcal{S}_c}$ as opposed to the $2^{|\mathcal{K}|}$ generally possible subsets $\mathcal{S} \subset \mathcal{K}$.
One can utilize the non-injectivity of the map $\tau_r$ to orthonormalize the representation in the sense of the following \cref{canonN}, yet
without the need to calculate the represented, full tensor. 
\begin{theorem}\label{canonN}
 Let $r^{(J)} = \mathrm{rank}(X^{[J]})$, $J \in \mathcal{K}$.
 Then there exists a representation $X = \tau_r(N)$, $N \in \mathcal{D}_r$, such that
$Y^{(J)} \in \R^{[n_J] \times r^{(J)}}$, $J \in \mathcal{K}^{\mathcal{S}_c}$,
as defined in \cref{YJZJ}, are orthonormal matrices. 
\end{theorem}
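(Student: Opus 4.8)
The plan is to start from an arbitrary representation and then orthonormalize it by a single leaves-to-root sweep of gauge transformations, exploiting the non-injectivity of $\tau_r$ guaranteed by \cref{reprthm}. Fix the root $c$. Since $r^{(J)} = \rank(X^{[J]})$ is, as a tuple of genuine ranks, feasible, \cref{reprthm} supplies some $N \in \mathcal{D}_r$ with $X = \tau_r(N)$. By \cref{Svlem}, the sets $J \in \mathcal{K}^{\mathcal{S}_c}$ are in bijection with the non-root vertices $v \in V \setminus \{c\}$ via $J = J_c(v)$, each corresponding to the edge $e_v := \{v, \mathrm{pred}_c(v)\}$ and the branch $\mathrm{branch}_c(v)$. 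A first observation I would record is that every $Y^{(J)}$ has full column rank $r^{(J)}$: in the factorization $X^{[J]} = Y^{(J)} Z^{(J)}$ of \cref{YJZJ}, both factors have exactly $r^{(J)}$ columns resp. rows, so if the product already has rank $r^{(J)} = \rank(X^{[J]})$, rank subadditivity forces each factor to be of full rank.

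The key tool is the edge gauge freedom underlying \cref{reprthm}: for any edge $e = \{v,w\}$ and any invertible $G \in \R^{r^{(J_e)} \times r^{(J_e)}}$, contracting $G$ into the $\beta^e$-mode of $N_v$ and $G^{-1}$ into the $\beta^e$-mode of $N_w$ leaves $\tau_r(N)$ invariant, since $\tau_r$ sums over $\beta^e$. I would then traverse the non-root vertices in a leaves-to-root order relative to $c$, maintaining the invariant that after a vertex $v$ has been visited its branch factor $Y^{(J_c(v))}$ has orthonormal columns. At a vertex $v$ whose descendants $h \in \mathrm{desc}_c(v)$ have all been processed, contracting $N_v$ with the child factors $Y^{(J_c(h))}$ reproduces $Y^{(J_c(v))}$ by \cref{eq:partcont}; for a leaf, where $\mathrm{desc}_c(v) = \emptyset$, this factor is simply $N_v$ in its $(\alpha_v; \beta^{e_v})$ matricization, which has full column rank and can be orthonormalized directly by a QR factorization whose triangular factor is absorbed into $N_{\mathrm{pred}_c(v)}$ along $e_v$ (an instance of the gauge move).

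The crucial step is that the inner-node orthonormalization also reduces to a purely local QR on $N_v$. Because each child factor is an isometry in its $(\alpha_{J_c(h)}; \beta^{e_h})$ matricization, contraction with the children preserves inner products along the child-edge indices; hence the Gram matrix of $Y^{(J_c(v))}$ in the index $\beta^{e_v}$ coincides with the Gram matrix of the matricization of $N_v$ whose column index is $\beta^{e_v}$ and whose row indices are the remaining modes $\{\beta^{e_h}\}_{h \in \mathrm{desc}_c(v)}$ (together with $\alpha_v$ if $v \in [d]$). Computing a QR of that local matricization, whose $R$-factor is invertible by the full-rank property, replacing $N_v$ by the reshaped orthonormal factor, and absorbing $R$ into the predecessor along $e_v$, reinstates the invariant at $v$ without changing $X$.

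When the sweep terminates, all accumulated $R$-factors have been pushed into the root node $N_c$, so $\tau_r(N)$ still equals $X$ while every $Y^{(J)}$, $J \in \mathcal{K}^{\mathcal{S}_c}$, is now orthonormal, proving the claim. I expect the inner-node reduction — verifying that orthonormal child factors collapse the global Gram computation for $Y^{(J_c(v))}$ into the local one for $N_v$ — to be the main technical point; the remainder is standard QR bookkeeping combined with the invertibility of the triangular factors (from the full-rank property) and the edge gauge invariance.
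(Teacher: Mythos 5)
Your proof is correct, and it is essentially the argument the paper points to: the paper does not spell out a proof but defers to \cite{Kr20_Tre}, where the statement is established by exactly this kind of leaves-to-root gauge/QR sweep. The two points you isolate --- full column rank of each $Y^{(J)}$ from $\mathrm{rank}(X^{[J]}) = r^{(J)}$ (making the triangular factors invertible), and the collapse of the global Gram matrix of $Y^{(J_c(v))}$ to the local one of $N_v$ once the child factors are isometries --- are precisely the substance of that construction, which is also mirrored in the QR-and-absorb steps of the paper's detailed \AIRLS{}-$0\mathcal{K}$ algorithm (\cref{AIRLS-0K}).
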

\begin{proof}
 Can for example be found in \cite{Kr20_Tre}.
\end{proof}

Note that the matrices $Y^{(J)}$ in \cref{canonN} are defined via the representation $N$. If may further be achieved that
these matrices each consist of the left singular vectors, $Y^{(J)} = U^{(J)}$, of the compact matrix SVDs $X^{[J]} = U^{(J)} \Sigma^{(J)} (V^{(J)})^T$, $J \in \mathcal{K}^{\mathcal{S}_c}$. Thereby, the decomposition in fact becomes essentially unique\footnote{Essentially here refers to the same weak uniqueness as for the conventional matrix SVD.} \cite{Gr10_Hie,Kr20_Tre}.
However, mere orthonormality is in general sufficient and can be ensured
with significantly less effort in an alternating optimization.
In case of the Tucker format (\cref{example:Tucker}), if indeed $Y^{(J)} = U^{(J)}$, this canonical form is specifically known as MLSVD \cite{LaMoVa00_AMu}, while for the tensor train format \cite{Os11_Ten}, it is known as canonical MPS \cite{Vi03_Eff}. General canonical forms of tensor tree networks and their properties are further discussed in \cite{Kr20_Tre}.

\section{Alternating iteratively reweighted least squares (\texorpdfstring{\AIRLS{}-$0\mathcal{K}$}{AIRLS-0K})}\label{AIRLS}
In this section, let $\mathcal{K}$ be a hierarchical family,
$G_\mathcal{K} = (V,E)$ the corresponding tree as well as $\tau_r: \mathcal{D}_r \rightarrow V^\mathcal{K}_{\leq r}$, with $\mathcal{D}_r = \bigtimes_{v \in V} \mathfrak{H}_{\mathfrak{m}_v}$, the representation map for $r \in \N^\mathcal{K}$ as described in \cref{sec:reloptHT}.
The idea of alternating least squares (ALS) is to in each step fixate $N = \{N_v\}_{v \in V} \in \mathcal{D}_r$
but the one component $N_c$, where the root $c \in V$ iteratively cycles through all vertices. 
We therefor define the linear map
\begin{align*}
 \mathcal{N}_{\neq c}: \mathfrak{H}_{\mathfrak{m}_c} \rightarrow V^\mathcal{K}_{\leq r}, \quad \mathcal{N}_{\neq c}(\hat{N}_c) = \tau_r(\{N_v\}_{v \in V \setminus \{c\}} \cup \{\hat{N}_c\}).
\end{align*}
As the image of that map is independent of the specific, chosen representation,
we obtain the (well defined) subspace (cf. \cref{sec:subdep})
\begin{align}\label{subsp}
 \mathcal{T}_c(\tau_r(N)) := \mathrm{image}(\mathcal{N}_{\neq c}). 
\end{align}
Though one avoids to ever calculate the full tensor $X = \tau_r(\{N_v\}_{v \in V}) \in \R^{n_1 \times \ldots \times n_d}$,
we define the resulting update as
\begin{align}\label{defXwceq}
 X_{\gamma,\omega,W}^{N,c} & := \mathrm{argmin}_{X \in \mathcal{T}_c(N)}\  \mathcal{J}^{a,\mathcal{K}^{\mathcal{S}_c}}_{\gamma,\omega}(X,\{W^{(J)}\}_{J \in \mathcal{K}^{\mathcal{S}_c}}),
\end{align}
as well as $(N_c)_{\gamma,\omega,W}^{N,c} \in \mathfrak{H}_{\mathfrak{m}_c}$ via $\mathcal{N}_{\neq c}((N_c)_{\gamma,\omega,W}^{N,c}) := X_{\gamma,\omega,W}^{N,c}$. The subset $\mathcal{S}_c \in \mathcal{K}$
is defined as by \cref{Svlem}, the objective functions in \cref{relaxedJ}.
The subsequent sections are summarized in \cref{AIRLS-0K}, though it generates the
same iterates $X^{(i)} = \tau_r(N^{(i)})$, $i \in \N_0$, as \cref{alg:relT} when the subspaces are chosen according to \cref{subsp}.

\subsection{Sweeps, micro steps and stability}\label{repbascal}
The update $X_{\gamma,\omega,W}^{N,c}$ (cf. \cref{defXwceq}) is independent of the specific, chosen representation $N$ of the previous iterate $X$ (cf. \cref{reprthm}). 
Thus, for each $c \in V$, the updating maps
\begin{align*}
  \mathcal{M}^{(c)}_{r}: \mathcal{D}_r \rightarrow \mathcal{D}_r, \quad  \mathcal{M}^{(c)}_{r}(N) := \{N_v\}_{v \in V \setminus \{c\}} \cup \{(N_c)_{\gamma,\omega,W}^{N,c}\}, \quad W = W_{\gamma,\tau_r(N)},
  \end{align*}
  operating on the data space, as well as the one operating on the full tensor space,
  \begin{align*}
 \zeta_{\mathcal{M}^{(c)}}: \R^{n_1 \times \ldots \times n_d} \rightarrow \R^{n_1 \times \ldots \times n_d}, \quad \zeta_{\mathcal{M}^{(c)}}(X) := \tau_{r(X)} \circ \mathcal{M}^{(c)}_{\tau_{r(X)}} \circ \tau_{r(X)}^{-1}(X),
\end{align*}
are well defined. Here, $r(X) \in \N^\mathcal{K}$ denotes the ranks of each $X$ and $N = \tau_r^{-1}(X)$ is each
an arbitrary representation. A whole sweep (for fixed $\gamma$ and $\omega$) is defined as
\begin{align*}
 \mathcal{M}_r := \bigcirc_{c \in V}\ \mathcal{M}_r^{(c)}, \quad \zeta_{\mathcal{M}} := \bigcirc_{c \in V}\ \zeta_{\mathcal{M}^{(c)}},
\end{align*}
where the order of composition may be chosen as most suitable.
Issues around these functions in particular concerning stable rank adaptivity 
have been discussed in \cite{Kr19_Sta}.

\subsection{Representation based evaluation}
In order to obtain a practically viable algorithm, 
it remains to show that each next iterate $X_{\gamma,\omega,W}^{N,c}$ (cf. \cref{defXwceq}) 
given $W = W_{\gamma,\tau_r(N)}$ and $\omega = \sqrt{\gamma} > 0$ (cf. \cref{sec:relaxed}) can indeed be calculated through its representation, that is, without 
the need to construct full tensors in $\R^{n_1 \times \ldots \times n_d}$.
The updated node $(N_c)_{\gamma,\omega,W}^{N,c}\in \mathfrak{H}_{\mathfrak{m}_c}$, for which $X_{\gamma,\omega,W}^{N,c} = \mathcal{N}_{\neq c}((N_c)_{\gamma,\omega,W}^{N,c})$, is given by the lineare least squares problem
(cf. \cref{tensoraugf,sec:complwe,sec:relaxed})
\begin{align*}
 (N_c)_{\gamma,\omega,W}^{N,c} = \underset{\widetilde{N}_c \in \mathfrak{H}_{\mathfrak{m}_c}}{\mathrm{argmin}}\ \| \mathcal{L} \circ \mathcal{N}_{\neq c}(\widetilde{N}_c) - y \|^2 + c_{\mathcal{L}} \gamma \sum_{J \in \mathcal{K}^{\mathcal{S}_c}} \|(\mathcal{W}^{(J)})^{1/2} \mathcal{N}_{\neq c}(\widetilde{N}_c) \|_F^2,
\end{align*}
for $\mathcal{W}^{(J)}$ as in \cref{widehatdef}. The minimizer is thus given as solution $(N_c)_{\gamma,\omega,W}^{N,c} := \widetilde{N}_c$ to 
\begin{align}\label{updateformula}
 \mathcal{N}_{\neq c}^\ast \circ \mathcal{L}^\ast \circ \mathcal{L} \circ \mathcal{N}_{\neq c}(\widetilde{N}_c)
 + c_{\mathcal{L}} \gamma \sum_{J \in \mathcal{K}^{\mathcal{S}_c}} \mathcal{N}_{\neq c}^\ast \circ \mathcal{W}^{(J)} \circ \mathcal{N}_{\neq c}(\widetilde{N}_c) = \mathcal{N}_{\neq c}^\ast \circ \mathcal{L}^\ast(y).
\end{align}
Following are two aspects that are required for a representation based evaluation.
The first one in \cref{sec:Ldecomp} depends on the operator $\mathcal{L}$ itself and
can in that sense not be influenced. The second one in \cref{sec:lrW} in turn merely
asks for the right choices of $\mathcal{S}_c$, namely the one in \cref{Svlem}, 
and can thus always be achieved.
\subsection{Decomposition of measurement operator}\label{sec:Ldecomp}
Like each linear operator, $\mathcal{L}: \R^{n_1 \times \ldots \times n_d} \rightarrow \R^\ell$ has a tensor description 
$L \in \R^{\ell \times n_1 \times \ldots \times n_d}$
in terms of
\begin{align}
 \label{LXtensor} \mathcal{L}(X)_\zeta = \sum_{\alpha_1 = 1}^{n_1} \ldots \sum_{\alpha_d = 1}^{n_d} L_{\zeta,\alpha_1,\ldots,\alpha_d} X_{\alpha_1,\ldots,\alpha_d}.
\end{align}
This tensor $L$ must itself somehow allow for an efficient handling.
Similar to \cref{reprthm}, each $L \in \R^{\ell \times n_1 \times \ldots \times n_d}$
can for some $r_L \in \N^\mathcal{K}$ (assumed to be low) be decomposed into lower dimensional components. We therefor define
\begin{align*}
 \mathcal{D}_{r_L}^L := \bigtimes_{v \in V} \mathfrak{H}_{\mathfrak{m}^L_v},
 \quad \mathfrak{m}^L_v := \{ \varepsilon^e \}_{e \in E_v} \cup 
 \begin{cases}
     \{\zeta, \alpha_v\} & \mbox{if } v \in [d], \\
    \emptyset & \mbox{otherwise}.
\end{cases}  
\end{align*}
The symbols $\varepsilon^e$, $e \in E$, are additional labels with range $\varepsilon^e \in [r^{(J_e)}_L]$, whereas $\zeta \in [\ell]$.
The assigned multilinear representation map is
\begin{align*}
 \rho_{r_L}(L)_{\zeta,\alpha_1,\ldots,\alpha_d} := \sum_{\varepsilon^e\,:\, e \in E} \prod_{\mu \in [d]} (L_\mu)_{\zeta,\alpha_\mu,\{\varepsilon^e\}_{e \in E_\mu}} \prod_{v \in V \setminus [d]} (L_v)_{\{\varepsilon^e\}_{e \in E_v}},
\end{align*}
For simplicity of notation, as with the representation $N$, we will also denote indices $\zeta$ and $\alpha_v$ in nodes $L_v$ with $v > d$. While this is formally compatible as long as $(L_v)_{\zeta,\alpha_v,\{\varepsilon^e\}_{e \in E_v}}$, $v > d$, is constant in $\zeta$, these indices can likewise be omitted.
Sampling operators for instance can be decomposed for $r_L^{(J)} \equiv 1$, $J \in \mathcal{K}$.
\begin{example}\label{example:TuckerL}
 For $d \in \N$, the operator decomposition corresponding to the Tucker graph $\mathcal{K}_{\mathrm{Tucker}}$ \cref{Tuckergraph}
 consists of the components $\{L_v\}_{v \in V}$ of sizes $L_\mu \in \R^{\ell \times n_\mu \times r^{(\{\mu\})}}$
 and $L_{d+1} \in \R^{r^{(J_{\{1,d+1\}})} \times \ldots \times r^{(J_{\{d,d+1\}})}}$.
 The corresponding contraction map $\rho_{r_L}$ is
 \begin{multline*}
  L_{\zeta,\alpha_1,\ldots,\alpha_d}  = \rho_{r_L}(L_1,\ldots,L_d,L_{d+1})_{\zeta,\alpha_1,\ldots,\alpha_d} \\
   = \sum_{\beta^{\{1,d+1\}} = 1}^{r^{(J_{\{1,d+1\}})}_L} \ldots \sum_{\beta^{\{d,d+1\}} = 1}^{r^{(J_{\{d,d+1\}})}_L} (L_1)_{\zeta,\alpha_1,\beta^{\{1,d+1\}}} \ldots (L_d)_{\zeta,\alpha_d,\beta^{\{d,d+1\}}} (L_{d+1})_{\{\beta^{\{\mu,d+1\}}\}_{\mu \in [d]}},
 \end{multline*}
 for $\alpha_\mu = 1,\ldots,n_\mu$, $\mu = 1,\ldots,d$ and $\zeta = 1,\ldots,\ell$ as visualized in \cref{Tucker_graphL}.
\tikzfigure{htb}{Tucker_graphL}{\label{Tucker_graphL}The contraction diagram for the Tucker-like decomposition of $L$ as in \cref{example:TuckerL} for $d = 4$ (cf. \cref{example:Tucker}).}%
\end{example}
 In general, when all summations over $\alpha_v$, $v \in [d]$, are proceeded first, then
 $\mathcal{L}(X)$ can be efficiently evaluated by means of the tree structure of (cf. \cref{brevalLN})
 \begin{align}\label{eq:LNprod}
  \mathcal{L}(X)_\zeta = \sum_{\varepsilon^e, \beta^e \,:\, e \in E}
  \prod_{v \in V} \big( \sum_{\alpha_v} (L_v)_{\zeta, \alpha_v, \{\beta^e\}_{e \in E_v}}
  \Nvtensor{v} \big), \quad \zeta \in [\ell].
 \end{align}
 Note that we have here again made use of the redundant additional indices $\zeta$ and $\alpha_v$ for $v > d$.
Likewise, the composition of $\mathcal{L}$ and $\mathcal{N}_{\neq c}$ can be proceeded efficiently.
\subsection{Equivalent low rank weights}\label{sec:lrW}
The switching between each complementary weights introduced in \cref{sec:complwe} has the following motivation. 

\begin{lemma}\label{switchlemma}
 Let $c \in V$ and $\mathcal{S}_c$ be as in \cref{Svlem}, and let $N$ be a representation 
 for which $Y^{(J)}$, $J \in \mathcal{K}^{\mathcal{S}_c}$, are orthonormal (cf. \cref{canonN}).
 Then the update $X_{\gamma,\omega,W}^{N,c}$ as defined in \cref{defXwceq} for the rank $n_J$ matrices
 $W^{(J)} = W^{(J)}_{\gamma,X} = (X^{[J]} (X^{[J]})^T + \gamma I)^{-1}$, 
 $J \in \mathcal{K}^{\mathcal{S}_c}$, $X = \tau_r(N)$, is the same as for the rank
 $r^{(J)}$ matrices
 \begin{align*}
  W^{(J)} = W^{(J)}_{\gamma,N,c} := Y^{(J)} (H^{(J)} + \gamma I)^{-1} (Y^{(J)})^T, \quad H^{(J)} := Z^{(J)} (Z^{(J)})^T, \quad J \in \mathcal{K}^{\mathcal{S}_c}.
 \end{align*}
\end{lemma}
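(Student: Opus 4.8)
The plan is to show that the two different weight choices produce the same update $X_{\gamma,\omega,W}^{N,c}$ by proving that they induce the same weighted least squares problem on the subspace $\mathcal{T}_c(N)$. Since the update is defined via \cref{defXwceq} as a minimizer of $\mathcal{J}^{a,\mathcal{K}^{\mathcal{S}_c}}_{\gamma,\omega}$ over $X \in \mathcal{T}_c(N)$, and the data-misfit term $\|\mathcal{L}(X)-y\|^2$ is independent of the weights, it suffices to show that the penalty terms $\sum_{J \in \mathcal{K}^{\mathcal{S}_c}} \|(W^{(J)})^{1/2} X^{[J]}\|_F^2$ agree for the two weight families \emph{for all} $X \in \mathcal{T}_c(N)$. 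First I would reduce the claim to a term-by-term identity: it is enough to show that for each fixed $J \in \mathcal{K}^{\mathcal{S}_c}$, the quadratic forms $X \mapsto \mathrm{trace}\big(X^{[J]}(X^{[J]})^T W^{(J)}\big)$ coincide when $W^{(J)} = (X_0^{[J]}(X_0^{[J]})^T + \gamma I)^{-1}$ (with $X_0 = \tau_r(N)$ the current iterate) versus $W^{(J)} = Y^{(J)}(H^{(J)}+\gamma I)^{-1}(Y^{(J)})^T$, restricted to the relevant subspace of matrices $X^{[J]}$.

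The key structural observation is that, for the root $c$ and the complementary family $\mathcal{K}^{\mathcal{S}_c}$ given by \cref{Svlem}, every $J \in \mathcal{K}^{\mathcal{S}_c}$ equals $J_c(v)$ for the vertex $v = v_{c,J}$, and $v$ lies in a branch \emph{not} containing $c$. Consequently, the factor $Y^{(J)}$ of the decomposition $X^{[J]} = Y^{(J)} Z^{(J)}$ from \cref{YJZJ} is assembled entirely from nodes $N_h$ with $h \in \mathrm{branch}(\cdot)$ avoiding $c$, hence is held \emph{fixed} as $\hat N_c$ varies. Therefore, for every $X = \mathcal{N}_{\neq c}(\hat N_c) \in \mathcal{T}_c(N)$, the left factor $Y^{(J)}$ is the common orthonormal matrix from the hypothesis, while only the right factor changes with $\hat N_c$. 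The matricization thus factors as $X^{[J]} = Y^{(J)} \widetilde Z^{(J)}$ with $Y^{(J)}$ fixed and orthonormal, and I would make this precise by reading off $Y^{(J)}$ from \cref{YJZJ} and confirming $c \notin \mathrm{branch}_w(v)$ for the splitting edge $e = \{v,w\}$ with $w = \mathrm{pred}_c(v)$.

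Given this factorization, the computation is an application of the Sherman--Morrison--Woodbury-style identity. For the full-rank weight, using $X_0^{[J]} = Y^{(J)} Z^{(J)}$ and the orthonormality $(Y^{(J)})^T Y^{(J)} = I$, one has $(Y^{(J)} H^{(J)} (Y^{(J)})^T + \gamma I)^{-1} = \frac{1}{\gamma}(I - Y^{(J)} Y^{(J)T}) + Y^{(J)}(H^{(J)}+\gamma I)^{-1}(Y^{(J)})^T$, which I would verify by direct multiplication splitting into the range of $Y^{(J)}$ and its orthogonal complement. When this is applied to $X^{[J]} = Y^{(J)} \widetilde Z^{(J)}$, whose columns lie in $\mathrm{range}(Y^{(J)})$, the term $\frac{1}{\gamma}(I - Y^{(J)} Y^{(J)T})$ annihilates it, so only the low-rank summand $Y^{(J)}(H^{(J)}+\gamma I)^{-1}(Y^{(J)})^T = W^{(J)}_{\gamma,N,c}$ contributes. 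Hence $\|(W^{(J)}_{\gamma,X})^{1/2} X^{[J]}\|_F^2 = \mathrm{trace}\big((\widetilde Z^{(J)})^T (H^{(J)}+\gamma I)^{-1} \widetilde Z^{(J)}\big) = \|(W^{(J)}_{\gamma,N,c})^{1/2} X^{[J]}\|_F^2$ for all $X \in \mathcal{T}_c(N)$, and summing over $J$ gives equality of the full objective on $\mathcal{T}_c(N)$, so the minimizers coincide.

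The main obstacle I anticipate is the bookkeeping needed to establish rigorously that $Y^{(J)}$ is shared across all $X \in \mathcal{T}_c(N)$ \emph{and} that the low-rank identity interacts correctly with the square-root factor $(\cdot)^{1/2}$ appearing in the objective. The square-root subtlety is mild because the objective only depends on $W^{(J)}$ through the quadratic form $\|(W^{(J)})^{1/2} X^{[J]}\|_F^2 = \mathrm{trace}(X^{[J]}(X^{[J]})^T W^{(J)})$, which is linear in $W^{(J)}$, so I would phrase everything through the quadratic form and never need an explicit square root. The more delicate point is the branch argument: I must invoke \cref{canonN} to know such an orthonormal representation exists, use the branch/predecessor notation from \cref{sec:graphtheory} to pin down which nodes feed into $Y^{(J)}$, and confirm $c$ is excluded precisely because $J = J_c(v)$ places $c$ on the opposite side of the edge $e$. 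This is where the complementary-weight choice $\mathcal{K}^{\mathcal{S}_c}$ is essential: for an arbitrary $\mathcal{S}$ the relevant factor need not be the fixed, orthonormal one, so the low-rank reduction would fail.
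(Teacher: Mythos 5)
Your proposal is correct and follows essentially the same route as the paper: both arguments rest on the decomposition $W^{(J)}_{\gamma,X} = W^{(J)}_{\gamma,N,c} + \gamma^{-1} U^{J,\perp}(U^{J,\perp})^T$ along $\mathrm{range}(Y^{(J)})$ and its complement, combined with the observation that $\mathrm{range}(\mathcal{N}_{\neq c}(\widetilde{N}_c)^{[J]}) \subseteq \mathrm{range}(Y^{(J)})$ for all $\widetilde{N}_c$ because, by construction of $\mathcal{S}_c$, the factor $Y^{(J)}$ does not involve the node at $c$. The only cosmetic difference is that the paper splits the square root $(W^{(J)}_{\gamma,X})^{1/2}$ directly using orthogonality, whereas you avoid square roots by phrasing everything through the trace quadratic form, which is linear in $W^{(J)}$; both are equally valid.
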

\begin{proof}
 It suffices to show that for every $\widetilde{N}_c \in \mathfrak{H}_{\mathfrak{m}_c}$, we have
 \begin{align*}
   (W^{(J)}_{\gamma,X})^{1/2} \mathcal{N}_{\neq c}(\widetilde{N}_c)^{[J]} = (W^{(J)}_{\gamma,N,c})^{1/2} \mathcal{N}_{\neq c}(\widetilde{N}_c)^{[J]}, \quad J \in \mathcal{K}^{\mathcal{S}_c}.
  \end{align*}
 Let the orthonormal matrix $U^{J,\perp} \in \R^{n_J \times n_J - r^{(J)}}$ span the orthogonal complement
 of the $r^{(J)}$ dimensional space $\mathrm{range}(Y^{(J)})$. Then
 \begin{align*}
  (W^{(J)}_{\gamma,X})^{1/2} = (W^{(J)}_{\gamma,N,c} + \gamma^{-1} U^{J,\perp} (U^{J,\perp})^T)^{1/2} = (W^{(J)}_{\gamma,N,c})^{1/2} + \gamma^{-1/2} U^{J,\perp} (U^{J,\perp})^T.
 \end{align*}
 It thus remains to show that $\mathrm{range}(\mathcal{N}_{\neq c}(\widetilde{N}_c)^{[J]}) \perp \mathrm{range}(U^{J,\perp})$ for all $\widetilde{N}_c$.
 As by construction of $\mathcal{S}_c$, the matrix $Y^{(J)}$ does not depend on the vertex $c \in V$, we have
  \begin{align*}
  \mathrm{range}(\mathcal{N}_{\neq c}(\widetilde{N}_c)^{[J]}) \subseteq \mathrm{range}(Y^{(J)}).
 \end{align*}
\end{proof}

\subsection{Path evaluations}
\Cref{switchlemma} allows for a further, significant simplification in the evaluation of $(N_c)_{\gamma,\omega,W}^{N,c}$
as defined in \cref{updateformula}. Let in the following $c \in V$ and ${\hat{J}} \in \mathcal{K}^{\mathcal{S}_c}$ be fixed.
Further, let $v_{c,{\hat{J}}} \in V \setminus \{c\}$ be the uniquely determined vertex with
 $J_c(v_{c,{\hat{J}}}) \cap [d] = {\hat{J}}$ (cf. \cref{sec:graphtheory}),
as in \cref{rootedtrees}. 
Without explicit indication of the dependence on the above, we denote (cf. \cref{eq:pathdef,ES})
\begin{align*}
p := c\mathring{\rightharpoondown}v_{c,{\hat{J}}} \subseteq V \setminus \{c,v_{c,{\hat{J}}}\}, \quad E_Y & := \partial{E}_{\{c\} \cup p}= E_c \setminus \{e_1\} \cup \partial{E}_p.
\end{align*}
For empty $p$, we set $p_1 = v_{c,{\hat{J}}}$ and $p_{-1} = c$ for convenience.
We further define the edges $e_1 = \{ c, p_1 \} \notin E_Y$ and $\hat{e} = \{ p_{-1}, v_{c,\hat{J}} \} \in E_Y$, such that ${\hat{J}} = J_{\hat{e}}$. 
\begin{theorem}[cf. \cref{sec:thmalt}]\label{thm:patheval}
Let $c \in V$ and $\mathcal{S}_c$ be as in \cref{Svlem}, and let $N$ be a representation 
 for which $Y^{(J)}$, $J \in \mathcal{K}^{\mathcal{S}_c}$, are orthonormal (as in \cref{canonN}).
Further, let the operator
$\mathcal{N}_{\neq c}^\ast \circ \mathcal{W}^{({\hat{J}})} \circ \mathcal{N}_{\neq c}: \mathfrak{H}_{\mathfrak{m}_c} \rightarrow \mathfrak{H}_{\mathfrak{m}_c}$
be described by the matrix $A^{({\hat{J}})} \in \mathfrak{H}_{\mathfrak{m}_c} \otimes \mathfrak{H}_{\mathfrak{m}_c}$.
Then (cf. \cref{NWJN})
\begin{align} \label{AtoM}
 \AJtensor
  = \deltatensor_{\alpha_c',\alpha_c} \big( \netprod_{e \in E_c \setminus \{ e_1 \}} \deltatensor_{{\beta^e}',\beta^e}\big) \ \MJmatrix,
\end{align}
where each $\delta_{\gamma',\gamma} \in \{0,1\}$ is a Kronecker delta, as well as 
 \begin{multline} \label{theMJ}
 \MJmatrix = 
    \netsum{\begin{array}{c} 
            \scriptstyle {\beta^e}',\beta^{e} \, :\, e \in E_p \setminus \{e_1\},\\
            \scriptstyle \alpha_v\,:\, v \in p
            \end{array}} 
 \big( \netprod_{e \in \partial{E}_p \setminus \{e_1,\hat{e}\}} \deltatensor_{{\beta^{e}}',\beta^{e}} \big)
 \mlcr
 \left( \netprod_{v \in p}  \Nvprime{v} \Nvtensor{v} \right) \HJinv{\hat{J}}{\hat{e}},
\end{multline}
and further (cf. \cref{ZZT})
\begin{multline} \label{theHJ}
 \HJmatrix{\hat{J}}{\hat{e}} = \mlcr
    \netsum{\begin{array}{c} 
            \scriptstyle {\beta^e}',\beta^{e} \, :\, e \in E_{\{c\} \cup p} \setminus \{\hat{e}\},\\
            \scriptstyle \alpha_v\,:\, v \in \{c\} \cup p
            \end{array}  }
 \big( \netprod_{e \in E_Y \setminus \{\hat{e}\}} \deltatensor_{{\beta^{e}}',\beta^{e}} \big) \netprod_{v \in \{c\} \cup p} \left( \Nvprime{v} \Nvtensor{v} \right),
\end{multline}
for each $\alpha_v \in [n_v]$, $v \in V$, and ${\beta^e}',\beta^e \in [r^{(J_e)}]$, $e \in E$.
\end{theorem}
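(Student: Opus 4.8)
The plan is to collapse the network down to the single channel running from $c$ to the weighted edge $\hat{e}$, exploiting that the orthonormal gauge trivializes every off-path branch. First I would invoke \cref{switchlemma}: since $Y^{(J)}$, $J \in \mathcal{K}^{\mathcal{S}_c}$, are orthonormal, the operator $\mathcal{W}^{(\hat{J})}$ may be built from the low-rank weight $W^{(\hat{J})}_{\gamma,N,c} = Y^{(\hat{J})} (H^{(\hat{J})}+\gamma I)^{-1} (Y^{(\hat{J})})^T$. Writing the $\hat{J}$-matricization of $\mathcal{N}_{\neq c}(\widetilde{N}_c)$ as $Y^{(\hat{J})} Z'(\widetilde{N}_c)$ (the factor $Y^{(\hat{J})}$ being independent of $c$, exactly as in the proof of \cref{switchlemma} via \cref{YJZJ}) and using $(Y^{(\hat{J})})^T Y^{(\hat{J})} = I$ twice — once to simplify $\mathcal{W}^{(\hat{J})} \circ \mathcal{N}_{\neq c}$ and once when forming the adjoint $\mathcal{N}_{\neq c}^\ast$ — the bilinear form of $A^{(\hat{J})}$ reduces to
\begin{align*}
 \langle \widetilde{M}_c, A^{(\hat{J})} \widetilde{N}_c\rangle = \mathrm{trace}\big( Z'(\widetilde{M}_c)^T (H^{(\hat{J})}+\gamma I)^{-1} Z'(\widetilde{N}_c)\big),
\end{align*}
where the trace threads the label $\beta^{\hat{e}}$ through the weight and contracts the columns over the modes $\{\alpha_\mu\}_{\mu \in \hat{J}^{\mathsf{c}}}$.

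Next I would expand $Z'(\widetilde{N}_c)$ along the tree. The node $c$ carries the mode $\alpha_c$ (in $\hat{J}^{\mathsf{c}}$ if $c \in [d]$) together with the labels $\{\beta^e\}_{e \in E_c}$; the label $\beta^{e_1}$ feeds the path $p$ toward $\hat{e}$, whereas each remaining $\beta^e$, $e \in E_c \setminus \{e_1\}$, contracts $\widetilde{N}_c$ against a hanging branch $Y^{(J_e)}$. By \cref{Svlem,canonN}, every branch edge directed away from $c$ carries a set $J_e = J_c(\cdot) \in \mathcal{K}^{\mathcal{S}_c}$, so $Y^{(J_e)}$ is orthonormal; contracting the bra and ket copies over the branch modes $\{\alpha_\mu\}_{\mu \in J_e}$ yields $(Y^{(J_e)})^T Y^{(J_e)} = \delta_{{\beta^e}',\beta^e}$. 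This produces the factor $\prod_{e \in E_c \setminus \{e_1\}} \delta_{{\beta^e}',\beta^e}$ of \cref{AtoM}. The mode $\alpha_c$ lies in $\hat{J}^{\mathsf{c}}$ and is untouched by $\mathcal{W}^{(\hat{J})}$ (which acts on the range of $Y^{(\hat{J})}$), so it passes through diagonally and contributes $\delta_{\alpha_c',\alpha_c}$.

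What then remains is precisely the contraction along $p$, linking ${\beta^{e_1}}'$ to $\beta^{e_1}$ through the nodes $N_v$, $v \in p$ (their modes $\alpha_v \in \hat{J}^{\mathsf{c}}$ contracted between the two copies, their own hanging branches collapsed by the same orthonormality argument to $\prod_{e \in \partial E_p \setminus \{e_1,\hat{e}\}} \delta_{{\beta^e}',\beta^e}$), terminated by $(H^{(\hat{J})}+\gamma I)^{-1}$ at $\hat{e}$; this is $\MJmatrix$ as in \cref{theMJ}, establishing \cref{AtoM}. Finally, \cref{theHJ} is only $H^{(\hat{J})} = Z^{(\hat{J})} (Z^{(\hat{J})})^T$ written locally: $Z^{(\hat{J})}$ is the contraction over $\{c\} \cup p$ with free labels $\{\beta^e\}_{e \in E_Y}$ and modes $\hat{J}^{\mathsf{c}}$, and forming $Z^{(\hat{J})} (Z^{(\hat{J})})^T$ contracts the $\hat{J}^{\mathsf{c}}$ modes, which — once more by orthonormality of the branches hanging off $\{c\} \cup p$ — collapses each $e \in E_Y \setminus \{\hat{e}\}$ to $\delta_{{\beta^e}',\beta^e}$.

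The conceptual content is simply that the orthonormal gauge makes every branch off the unique path $c \mathring{\rightharpoondown} v_{c,\hat{J}}$ reduce to an identity, leaving one active channel. The hard part is therefore not any single estimate but the index bookkeeping: one must check that the primed/unprimed conventions of \cref{sec:notdev}, the splitting $E_Y = (E_c \setminus \{e_1\}) \cup \partial E_p$, and the orthonormality of all sub-branches combine so that exactly the labels appearing in \cref{AtoM,theMJ,theHJ} survive, with the interior path edges in $\mathring{E}_{\{c\}\cup p}$ counted once and not double-collapsed. I would carry out this index-level verification in \cref{sec:thmalt}, using the compact notation of \cref{app:sec:reloptHT}.
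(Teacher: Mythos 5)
Your proposal is correct and takes essentially the same route as the paper's proof: invoke \cref{switchlemma} to pass to the low-rank weight $Y^{(\hat{J})}(H^{(\hat{J})}+\gamma I)^{-1}(Y^{(\hat{J})})^T$, split $\mathcal{N}_{\neq c}$ into the contraction along $p$ and the orthonormal branch matrices $Y^{(J_e)}$, $e \in E_Y$, and let orthonormality collapse every off-path branch into a Kronecker delta, leaving the path contraction terminated by $(H^{(\hat{J})}+\gamma I)^{-1}$ for \cref{AtoM,theMJ} and the local $Z^{(\hat{J})}(Z^{(\hat{J})})^T$ form for \cref{theHJ}. The only difference is presentational: the paper routes the argument through an explicit tensor representation $G^{(\hat{J})}$ of $\mathcal{N}_{\neq c}$ and path tensors $P^{(\hat{J})}$, $P^{(+c,\hat{J})}$, whereas you phrase the identical cancellations via the bilinear form and the matricization $Y^{(\hat{J})}Z'(\widetilde{N}_c)$, deferring the same index bookkeeping to the compact notation of \cref{sec:thmalt}.
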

\begin{proof}
See \cref{NWJN,ZZT}. For the rigorous, though exceedingly technical proof, see \cref{technicalproof}. A more elegant version can be found in \cref{sec:thmalt}.
\end{proof}
\begin{figure}[htb]
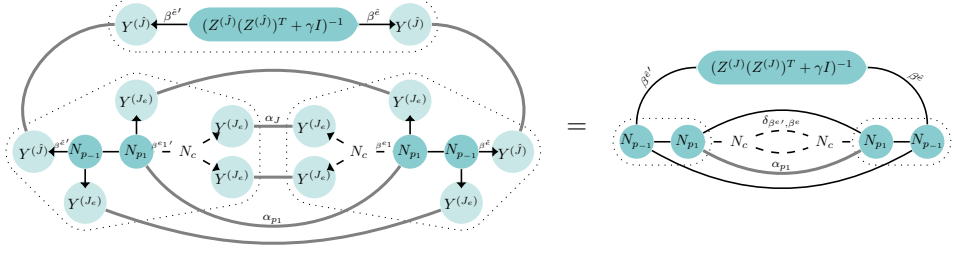

 \begin{align*}
  \tikzminipage{HT_graph} = \tikzminipage{HT_graph_deltas}
 \end{align*}
\caption{\label{NWJN}Network diagram for $A^{(\hat{J})}$ representing $\mathcal{N}_{\neq c}^\ast \circ \mathcal{W}^{({\hat{J}})} \circ \mathcal{N}_{\neq c}$ (cf. \cref{AtoM,theMJ}) for a particular case of a certain $\mathcal{K}$, network $N$ and a path $p = (p_1,p_{-1})$ of length $|p| = 2$. Contractions over labels $\alpha_S$, $S \subset [d]$, are in gray, whereas uncontracted modes are visualized via dashed lines. Here, it is $c,p_{-1} \notin [d]$, but $p_1 \in [d]$.
We recommend to view the digital version for better readability.
{\normalfont[Lefthand]} Emphasized are the segments $G^{(\hat{J})}$ ($\mathcal{N}_{\neq c}^\ast$ at south-west and $\mathcal{N}_{\neq c}$ south-east) and $W^{(\hat{J})}_{\gamma,N,c}$ (north)
as in \cref{thm:patheval}. The lighter shaded nodes are the partial contractions $Y^{(J_e)}$
for $e \in E_Y$, the orthogonality constraints of which are indicated with corresponding arrows.
{\normalfont[Righthand]} The contracted version
in which only the nodes $\{N_v\}_{v \in p}$ and their copies (as encircled) as well as the matrix $(H^{(\hat{J})} + \gamma I)^{-1} = (Z^{(\hat{J})} (Z^{(\hat{J})})^T + \gamma I)^{-1}$ remain as well as some delta tensors.}
\end{figure}
\begin{figure}[htb]
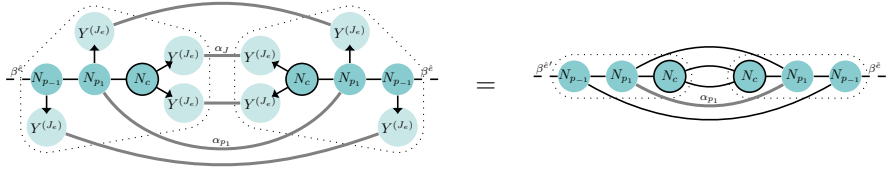

 \begin{align*}
  \tikzminipage{HT_graph_Z} = \tikzminipage{HT_graph_Z_deltas}
 \end{align*}
\caption{\label{ZZT}Network diagram for $H^{(\hat{J})} = Z^{(\hat{J})} (Z^{(\hat{J})})^T$ (cf. \cref{theHJ}) for the same particular case 
as in \cref{NWJN}. 
{\normalfont[Lefthand]} The lighter shaded nodes are the partial contractions $Y^{(J_e)}$
for $e \in E_Y$, the orthogonality constraints of which are indicated with corresponding arrows.
{\normalfont[Righthand]} The contracted version
in which only the nodes $\{N_v\}_{v \in \{c\} \cup p}$ and their copies (as encircled) remain.}
\end{figure}%
The formula for $A^{(\hat{J})}$ simplifies whenever $\hat{e} \in E_c$ as follows.
\begin{corollary}\label{parthle0}
 In \cref{thm:patheval}, if $\hat{e} = e_1 = \{ c,v \}$ for $v \in \mathrm{neigh}(c)$, 
 then $p = \emptyset$. Thus, we have $M^{(\hat{J})} = (H^{(\hat{J})} + \gamma I)^{-1}$ and
  \begin{align*}
 \HJmatrix{\hat{J}}{\hat{e}} 
  = 
    \netsum{\begin{array}{c} 
            \scriptstyle \beta^{e} \, :\, e \in E_c \setminus \{\hat{e}\},\\
            \scriptstyle \alpha_c
            \end{array}  }
  (N_c)_{\alpha_c,\{\beta^e\}_{e \in E_c \setminus \{\hat{e}\}},{\beta^{\hat{e}}}'} \cdot  \Nvtensor{c},
\end{align*}
for $\alpha_v \in [n_v]$, and ${\beta^e}',\beta^e \in [r^{(J_e)}]$, $e \in E_c$.
\end{corollary}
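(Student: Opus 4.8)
This statement is a direct specialization of \cref{thm:patheval} to the degenerate case in which the path $p$ collapses, so the plan is first to show that the hypothesis forces $p = \emptyset$, and then to read off both identities by substituting $p = \emptyset$ into \cref{theMJ} and \cref{theHJ}.

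First I would confirm $p = \emptyset$. By construction $e_1 = \{c, p_1\}$ and $\hat{e} = \{p_{-1}, v_{c,\hat{J}}\}$; if these coincide as an edge, then $\hat{e} = e_1$ joins $c$ to $v_{c,\hat{J}}$, so $v_{c,\hat{J}} \in \mathrm{neigh}(c)$. Since $p = c \mathring{\rightharpoondown} v_{c,\hat{J}}$ (cf. \cref{eq:pathdef}) is exactly the ordered set of interior vertices strictly between the now-adjacent endpoints, it is empty, consistent with the convention $p_1 = v_{c,\hat{J}}$, $p_{-1} = c$ and $\hat{e} = e_1 = \{c, v_{c,\hat{J}}\}$.

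Next I would substitute $p = \emptyset$ into \cref{theMJ}. Then $E_p$, $\partial E_p$, and the vertex set $p$ are all empty, so every summation over $\{{\beta^e}', \beta^e\}_{e \in E_p \setminus \{e_1\}}$ and $\{\alpha_v\}_{v \in p}$ vanishes and the products over $\partial E_p \setminus \{e_1, \hat{e}\}$ and over $v \in p$ are empty. Only the factor $(H^{(\hat{J})} + \gamma I)^{-1}$ survives, and since $e_1 = \hat{e}$ its free labels are precisely ${\beta^{\hat{e}}}', \beta^{\hat{e}}$, giving $M^{(\hat{J})} = (H^{(\hat{J})} + \gamma I)^{-1}$.

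Finally, substituting $p = \emptyset$ into \cref{theHJ} gives $\{c\} \cup p = \{c\}$, whence $E_{\{c\} \cup p} = E_c$ and $E_Y = E_c \setminus \{e_1\} = E_c \setminus \{\hat{e}\}$. The remaining Kronecker product then runs over $E_Y \setminus \{\hat{e}\} = E_c \setminus \{\hat{e}\}$ and identifies the primed and unprimed bond labels on every edge except $\hat{e}$, collapsing the double summation over $\{{\beta^e}', \beta^e\}_{e \in E_c \setminus \{\hat{e}\}}$ into a single sum over $\{\beta^e\}_{e \in E_c \setminus \{\hat{e}\}}$; together with the only two surviving node factors $(N_c)_{\ldots, {\beta^{\hat{e}}}'}$ and $(N_c)_{\ldots, \beta^{\hat{e}}}$ this is exactly the asserted contraction of $N_c$ with its copy over all modes but $\hat{e}$. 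The single point requiring care is this index bookkeeping in the empty-path limit — in particular checking $E_Y = E_c \setminus \{\hat{e}\}$ and that the surviving deltas match precisely the correct bond labels — but this is purely a matter of tracking index sets through empty products and presents no genuine obstacle beyond the already established \cref{thm:patheval}.
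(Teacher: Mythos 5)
Your proof is correct and takes essentially the same route as the paper: the paper states \cref{parthle0} without a separate argument, treating it precisely as the immediate specialization of \cref{thm:patheval} to the empty path (and restates it in the node notation of the supplement simply as $H^{(\hat{J})} = (N_c)_{\beta^{\hat{e}} \rightarrow {\beta^{\hat{e}}}'} \boxtimes N_c$). Your explicit bookkeeping — verifying $p = \emptyset$, that \cref{theMJ} degenerates to $(H^{(\hat{J})} + \gamma I)^{-1}$, and that the surviving deltas in \cref{theHJ} run over $E_c \setminus \{\hat{e}\}$ so the double sum collapses — is exactly the verification the paper leaves implicit.
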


\subsection{Branch evaluations}\label{sec:brancheval}

As described in the following,
each expression in the update formula of $(N_c)_{\gamma,\omega,W}^{N,c}$ (cf. \cref{updateformula}) can be rewritten, such that reevaluations of identical terms are avoided. This is particularly
useful (\cref{brevalLN}) for the first measurement related summand $(\mathcal{L} \circ \mathcal{N}_{\neq c})^\ast \circ \mathcal{L} \circ \mathcal{N}_{\neq c}: \mathfrak{H}_{\mathfrak{m}_c} \rightarrow \mathfrak{H}_{\mathfrak{m}_c}$ and the righthand side $(\mathcal{L} \circ \mathcal{N}_{\neq c})^\ast: \R^\ell \rightarrow \mathfrak{H}_{\mathfrak{m}_c}$ since only few branch-wise evaluations
change after each micro-step during a sweep. While this is not true for 
the weight related terms or the sum of such, the computational complexity may (depending on $c \in V$, $\mathcal{K}$ and $d$) still be reduced through the recursive, branch-wise evaluation of the entire term $\sum_{J \in \mathcal{K}^{\mathcal{S}_c}} \mathcal{N}_{\neq c}^\ast \circ \mathcal{W}^{(J)} \circ \mathcal{N}_{\neq c}$ (\cref{brevalA,brevalHJ}). 
\def\Lv#1{(L_{#1})_{ \zeta,\alpha_{#1},\{\varepsilon^e\}_{e \in E_{#1}} }}%
\def\Stensor#1{S^{(J_{#1})}_{ \zeta,\beta^{#1},\varepsilon^{#1} }}%
\def\hadzeta{}%
\begin{proposition}\label{brevalLN}
Let $c \in V$ and each $J_e \in \mathcal{K}^{\mathcal{S}_c}$, $e \in E$. 
Further, let $\mathcal{L} \circ \mathcal{N}_{\neq c}: \mathfrak{H}_{\mathfrak{m}_c} \rightarrow \R^\ell$ be represented by the tensor $F_c \in \R^{[\ell]} \otimes \mathfrak{H}_{\mathfrak{m}_c}$. Then
 \begin{align*}
  (F_c)_{ \zeta,\alpha_c,\{\beta^e\}_{e \in E_c} } =
  \netsum{ \varepsilon^e \,:\, e \in E_c }\hadzeta
  \Lv{c}
  \netprod_{v \in \mathrm{neigh}(c)} 
  \Stensor{\{c,v\}},
 \end{align*}
for $\zeta \in [\ell]$ (not being contracted), as well as
\begin{align*}
 \Stensor{\hat{e}} 
 = 
 \netsum{\begin{array}{c} 
            \scriptstyle \varepsilon^{e},\beta^{e}\, :\, e \in E_v \setminus \{\hat{e}\} \\
            \scriptstyle \alpha_v
            \end{array} 
} 
 \Lv{v} \Nvtensor{v} 
 \netprod_{b \in \mathrm{desc}_c(v)} \Stensor{\{v,b\}},
\end{align*}
for $\hat{e} = \{\mathrm{pred}_c(v),v\}$, $v \in V \setminus \{c\}$ and $\zeta \in [\ell]$ (not being contracted).
\end{proposition}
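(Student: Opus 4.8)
The plan is to start from the fully contracted expression for $\mathcal{L}(\tau_r(N))_\zeta$ in \cref{eq:LNprod} and to track what survives when the central node is kept as a free argument. Since $\mathcal{N}_{\neq c}(\widetilde{N}_c) = \tau_r(\{N_v\}_{v \in V \setminus \{c\}} \cup \{\widetilde{N}_c\})$ is linear in $\widetilde{N}_c$, the tensor $F_c$ representing $\mathcal{L} \circ \mathcal{N}_{\neq c}$ is precisely the partial contraction of \cref{eq:LNprod} in which the summations over the $c$-node modes $\alpha_c$ and $\{\beta^e\}_{e \in E_c}$ are dropped — so that these become the free output indices of $F_c$ — and the factor $N_c$ is removed, leaving only its measurement partner $\Lv{c}$. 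What then remains to be shown is that the contraction of all the remaining nodes factorizes into the claimed product over $\mathrm{neigh}(c)$.

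First I would invoke the branch decomposition from \cref{sec:graphtheory}, namely $\dot{\bigcup}_{h \in \mathrm{neigh}(c)} \mathrm{branch}_c(h) = V \setminus \{c\}$. Because $G$ is a tree, each $\mathrm{branch}_c(h)$ is a subtree whose only edge to the complement is $e = \{c,h\}$; hence in the product $\netprod_{v \neq c}(\cdots)$ of \cref{eq:LNprod} the summations over all edge labels $\varepsilon^e,\beta^e$ interior to a given branch decouple from those of the other branches, and only the labels $\varepsilon^{\{c,h\}},\beta^{\{c,h\}}$ of the connecting edge survive as shared with the $c$-node. Collecting, for each $h \in \mathrm{neigh}(c)$, the contraction over $\mathrm{branch}_c(h)$ into a tensor $\Stensor{\{c,h\}}$ then yields exactly the top-level formula, with the outer sum over $\{\varepsilon^e\}_{e \in E_c}$ pairing $\Lv{c}$ against these branch contractions, while $\alpha_c$ and the $\{\beta^e\}_{e\in E_c}$ stay free. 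The measurement index $\zeta$ is never summed; as it occurs in every leaf block $L_\mu$, $\mu \in [d]$, it is carried unchanged through each $S$-tensor.

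It then remains to establish the recursion for $\Stensor{\hat{e}}$, which I would prove by induction on $\mathrm{branch}_c(v)$, proceeding from the leaves toward $c$. For $v \in V \setminus \{c\}$ with incoming edge $\hat{e} = \{\mathrm{pred}_c(v),v\}$, the branch splits as $\mathrm{branch}_c(v) = \{v\} \,\dot{\cup}\, \dot{\bigcup}_{b \in \mathrm{desc}_c(v)} \mathrm{branch}_c(b)$, and the same tree argument shows that the contraction over $\mathrm{branch}_c(v)$ factors into the node block $\Lv{v}\,\Nvtensor{v}$ times the product of the sub-branch contractions $\Stensor{\{v,b\}}$, with the labels $\varepsilon^e,\beta^e$ of the descendant edges $e \in E_v \setminus \{\hat{e}\}$ and the mode $\alpha_v$ summed out. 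This is precisely the asserted recurrence, its base case being the leaves $v \in [d]$ with empty $\mathrm{desc}_c(v)$, where the product over descendants is the empty product and only $\alpha_v$ is contracted.

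The main obstacle is bookkeeping rather than conceptual: one must verify at each step that within a branch exactly one pair of edge labels ($\varepsilon^{\hat{e}},\beta^{\hat{e}}$) stays free while all interior labels are contracted, that the physical mode $\alpha_v$ is correctly shared between $L_v$ and $N_v$ only for $v \in [d]$, and that the redundant indices $\alpha_v,\zeta$ attached to nodes $v \notin [d]$ (per the conventions of \cref{sec:notdev,sec:Ldecomp}) may be suppressed consistently. Once the decoupling of the per-branch summations afforded by the tree structure is made precise, both displayed identities follow by merely reindexing the sum in \cref{eq:LNprod}.
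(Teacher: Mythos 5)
Your proposal is correct and takes essentially the same route as the paper's proof: you identify $F_c$ as the contraction of \cref{eq:LNprod} with $N_c$ removed and its labels $\alpha_c,\{\beta^e\}_{e \in E_c}$ left free, establish by induction along the tree that $S^{(J_{\hat{e}})}$ equals the contraction of the blocks $L_h \boxtimes N_h$ over $\mathrm{branch}_c(v)$, and then regroup the remaining product according to the decomposition of $V \setminus \{c\}$ into the branches at $\mathrm{neigh}(c)$. The paper's proof is a terser rendering of exactly this argument (stating that the recursion unfolds to the branch contraction and then regrouping), so your additional bookkeeping on free versus contracted labels and on the uncontracted index $\zeta$ only makes explicit what the paper leaves implicit.
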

\begin{proof}
 See \cref{app:sec:brancheval}.
\end{proof}
The paths appearing in the evaluation of $A^{(\hat{J})} \in \mathfrak{H}_{\mathfrak{m}_c} \otimes \mathfrak{H}_{\mathfrak{m}_c}$ representing $\mathcal{N}_{\neq c}^\ast \circ \mathcal{W}^{({\hat{J}})} \circ \mathcal{N}_{\neq c}: \mathfrak{H}_{\mathfrak{m}_c} \rightarrow \mathfrak{H}_{\mathfrak{m}_c}$, $J \in \mathcal{K}^{\mathcal{S}_c}$ (cf. \cref{thm:patheval}) naturally overlap. In the evaluation of
$A := \sum_{J \in \mathcal{K}} A^{(\hat{J})}$ (cf. \cref{updateformula}),
this can be utilized. 

\begin{proposition}\label{brevalA}
Let $c \in V$ and each $J_e \in \mathcal{K}^{\mathcal{S}_c}$, $e \in E$. It is
 \begin{align*}
  \sum_{\hat{J} \in \mathcal{K}} \AJtensor = 
  \deltatensor_{\alpha_c',\alpha_c} \sum_{v \in \mathrm{neigh}(c)} \big( \netprod_{e \in E_c \setminus \{\{c,v\}\}} \deltatensor_{{\beta^e}',\beta^e}\big) \BJ{\{c,v\}}{\{c,v\}}
 \end{align*} 
 with $B^{(J_{\hat{e}})} = (H^{(J_{\hat{e}})} + \gamma I)^{-1} +  \sum_{b \in \mathrm{desc}_c(v)}
 \widetilde{B}^{(J_{\{v,b\}})}$,
 $\hat{e} = \{\mathrm{pred}_c(v),v\}$, $v \in V \setminus \{c\}$, as well as, for $b \in \mathrm{desc}_c(v)$,
  \begin{multline*}
 \BJtilde{\{v,b\}}{\hat{e}} = \mlcr
 \netsum{\begin{array}{c} 
            \scriptstyle \beta^{e}\, :\, e \in E_v \setminus \{\hat{e}\},\\
            \scriptstyle {\beta^{\{v,b\}}}',\, \alpha_v
            \end{array} 
} 
(N_v)_{\alpha_v,{\beta^{\hat{e}}}',\{\beta^e\}_{e \in E_v \setminus \{\{v,b\},\hat{e}\}},{\beta^{\{v,b\}}}'} \cdot
\BJ{\{v,b\}}{\{v,b\}} \cdot \Nvtensor{v}.
 \end{multline*}
\end{proposition}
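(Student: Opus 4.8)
The plan is to start from the closed form of each summand $A^{(\hat J)}$ supplied by \cref{thm:patheval} and regroup the sum over $\hat J$ according to which branch of the root $c$ contains the associated vertex $v_{c,\hat J}$; the recursion for $B$ then emerges as an iterated, node-by-node version of the single-shot path contraction \cref{theMJ}. Throughout I identify $\hat J \in \mathcal{K}$ with an edge $e \in E$ through the bijections $\mathcal{K} \cong E \cong \mathcal{K}^{\mathcal{S}_c}$ (\cref{Kcorlem,Svlem}), so that each $A^{(\hat J)}$ is understood as the low-rank-weight object of \cref{thm:patheval}.

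First I would carry out the grouping. By \cref{AtoM} each $A^{(\hat J)}$ equals $\delta_{\alpha_c',\alpha_c}$ times the product of Kronecker deltas over $E_c\setminus\{e_1\}$ times $M^{(\hat J)}$, where $e_1=\{c,p_1\}$ is the first edge of the path $c\mathring{\rightharpoondown}v_{c,\hat J}$. Since $p_1$ is the unique neighbour of $c$ with $v_{c,\hat J}\in\mathrm{branch}_c(p_1)$, the edge $e_1$ depends only on which branch contains $v_{c,\hat J}$. Partitioning the index set as $\{\hat J\}=\dot{\bigcup}_{w\in\mathrm{neigh}(c)}\{\hat J: v_{c,\hat J}\in\mathrm{branch}_c(w)\}$ and pulling the common delta factor out of the inner sum for each fixed $w$ (whose first edge is $\{c,w\}$) yields the stated outer formula as soon as one sets
\[ B^{(J_{\{c,w\}})} := \sum_{\hat J:\, v_{c,\hat J}\in\mathrm{branch}_c(w)} M^{(\hat J)}, \qquad w\in\mathrm{neigh}(c). \]
It then remains to show that these matrices obey the recursion in the statement.

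For the recursion I would prove, by induction on the vertices of the branch ordered from its leaves inward, the more general claim that for every edge $e=\{\mathrm{pred}_c(v),v\}$,
\[ B^{(J_e)} = \sum_{\hat J:\, v_{c,\hat J}\in\mathrm{branch}_c(v)} (M^{(\hat J)})^{\uparrow e}, \]
where $(M^{(\hat J)})^{\uparrow e}$ denotes the partial contraction of $(H^{(\hat J)}+\gamma I)^{-1}$ with only those path nodes lying strictly below $e$, leaving the residual matrix index on $e$; taking $e=e_1$ recovers $(M^{(\hat J)})^{\uparrow e_1}=M^{(\hat J)}$ and hence the display above. In the base case $\mathrm{desc}_c(v)=\emptyset$ the branch is $\{v\}$, the only admissible index is $\hat J=J_e$, whose path bottom edge is $e$ itself, so $(M^{(J_e)})^{\uparrow e}$ is by definition $(H^{(J_e)}+\gamma I)^{-1}$, matching the recursion with an empty descendant sum (and coinciding with \cref{parthle0} in the special case $v\in\mathrm{neigh}(c)$). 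For the inductive step I split $\mathrm{branch}_c(v)=\{v\}\,\dot{\cup}\,\dot{\bigcup}_{b\in\mathrm{desc}_c(v)}\mathrm{branch}_c(b)$: the term $\hat J=J_e$ contributes the local $(H^{(J_e)}+\gamma I)^{-1}$, while every deeper $\hat J$ passes through exactly one child $b$. By the inductive hypothesis the deeper contributions routed through $b$ sum, on the edge $\{v,b\}$, to $B^{(J_{\{v,b\}})}$; applying a single push-through-$N_v$ step then transports the index up to $e$. The key point is that this single step is precisely the linear map defining $\widetilde B^{(J_{\{v,b\}})}$, namely contracting $B^{(J_{\{v,b\}})}$ with two copies of the node $N_v$ (one carrying primed edge labels) and summing the off-path legs $E_v\setminus\{\{v,b\},e\}$ diagonally; this reproduces one factor of the product $\prod_{v\in p}(N_v)'(N_v)$ together with the side-edge deltas in \cref{theMJ}. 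Linearity lets me push the whole sum $B^{(J_{\{v,b\}})}$ at once, and summing over $b\in\mathrm{desc}_c(v)$ and adding the local term gives exactly the claimed recursion.

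The main obstacle is the index bookkeeping of the inductive step: verifying that the simultaneous path contraction \cref{theMJ} (equivalently \cref{theHJ} for the innermost $(H+\gamma I)^{-1}$) factorizes into the iterated single-node push-ups $\widetilde B$, with the off-path boundary legs correctly producing the Kronecker deltas of \cref{theMJ} while the internal path edges are the ones actually contracted. Once one confirms that peeling a single bottom path node turns $(M^{(\hat J)})^{\uparrow\{v,b\}}$ into $(M^{(\hat J)})^{\uparrow e}$ exactly through the $\widetilde B$ formula, the induction closes; specialising to $v=w\in\mathrm{neigh}(c)$, so $e=e_1$, then recovers $B^{(J_{\{c,w\}})}=\sum_{v_{c,\hat J}\in\mathrm{branch}_c(w)}M^{(\hat J)}$ and completes the proof.
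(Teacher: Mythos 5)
Your proposal is correct and follows essentially the same route as the paper's proof: both expand each $A^{(\hat{J})}$ via \cref{AtoM}, group the sum over $\hat{J} \in \mathcal{K}$ by the branches of the root $c$ (equivalently, by the first path edge $e_1$), and close the argument with a leaves-to-root induction over branches in which splitting $\mathrm{branch}_c(v)$ into $\{v\}$ and its descendants' branches reduces the step to the single-node push-through contraction that defines $\widetilde{B}$. Your inductive invariant $B^{(J_e)} = \sum_{\hat{J}} (M^{(\hat{J})})^{\uparrow e}$ is the paper's inductive identity with the upper path contraction stripped off --- an equivalent reformulation of the same bookkeeping, not a genuinely different argument.
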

\begin{proof}
 See \cref{app:sec:brancheval}.
\end{proof}
Due to the recursive structure in \cref{brevalA}, the evaluation is to be proceeded in order leaves to root.
In turn, also the matrices $H^{(J_{\hat{e}})}$, $\hat{e} \in E$, (cf. \cref{theHJ}) can be simplified, but in the opposing root to leaves order. The starting points for this recursion are given by \cref{parthle0}.
\begin{proposition}\label{brevalHJ}
Let $c \in V$ and each $J_e \in \mathcal{K}^{\mathcal{S}_c}$, $e \in E$. 
For $\hat{e} = \{\mathrm{pred}_c(v),v\}$, $v \in V \setminus \{c\}$, and $b \in \mathrm{desc}_c(v)$, it is
\begin{multline*}
\HJmatrix{J_{\{v,b\}}}{\{v,b\}} = \mlcr
\netsum{\begin{array}{c} 
            \scriptstyle \beta^{e}\, :\, e \in E_v \setminus \{\{v,b\}\}\mlcr
%             \scriptstyle {\beta^{\hat{e}}}',\\
            \scriptstyle {\beta^{\hat{e}}}',\, \alpha_v
            \end{array} 
} 
(N_v)_{\alpha_v,{\beta^{\{v,b\}}}',\{\beta^e\}_{e \in E_v \setminus \{\{v,b\},\hat{e}\}},{\beta^{\hat{e}}}'} \cdot
\HJmatrix{J_{\hat{e}}}{\hat{e}} \cdot \Nvtensor{v}.
\end{multline*}
\end{proposition}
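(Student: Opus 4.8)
The plan is to prove the identity directly from the closed form \cref{theHJ}, by isolating the contribution of the single vertex $v$ to the contraction that defines $H^{(J_{\{v,b\}})}$ and recognizing what remains as $H^{(J_{\hat e})}$. No new analytic input is needed beyond the orthonormality of the $Y^{(J)}$, $J \in \mathcal{K}^{\mathcal{S}_c}$ (\cref{canonN}), which is already encoded in \cref{theHJ} through the boundary Kronecker deltas.

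First I would settle the combinatorics of the two relevant paths. Writing $p_{\{v,b\}} := c \mathring{\rightharpoondown} b$ and $p_{\hat e} := c \mathring{\rightharpoondown} v$, the assumption $b \in \mathrm{desc}_c(v)$ (equivalently $v = \mathrm{pred}_c(b)$) gives $p_{\{v,b\}} = p_{\hat e} \cup \{v\}$ with $v$ its final entry, hence $\{c\} \cup p_{\{v,b\}} = (\{c\} \cup p_{\hat e}) \cup \{v\}$. Thus the node product in \cref{theHJ} for $J_{\{v,b\}}$ runs over exactly one extra vertex, $v$, contributing the two copies $(N_v)_{\alpha_v,\{{\beta^e}'\}}$ and $(N_v)_{\alpha_v,\{\beta^e\}}$. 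The edge sets then transform as $E_{\{c\}\cup p_{\{v,b\}}} = E_{\{c\}\cup p_{\hat e}} \cup E_v$ and, for the boundaries, $E_Y^{(\{v,b\})} = (E_Y^{(\hat e)} \setminus \{\hat e\}) \cup (E_v \setminus \{\hat e\})$, since adjoining $v$ turns $\hat e$ from a boundary edge into an interior one and promotes the remaining edges at $v$ to new boundary edges.

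The core of the argument is the bookkeeping of this reclassification. The edge $\hat e$, free in $H^{(J_{\hat e})}$, becomes interior: its labels ${\beta^{\hat e}}',\beta^{\hat e}$ are now summed and couple the two copies of $N_v$ to the residual contraction. The target edge $\{v,b\}$ is the new free boundary edge carrying the output indices ${\beta^{\{v,b\}}}',\beta^{\{v,b\}}$. Every other edge $e \in E_v \setminus \{\hat e,\{v,b\}\}$ is a new boundary edge, so the factor $\prod_{e \in E_Y^{(\{v,b\})}\setminus\{\{v,b\}\}} \delta_{{\beta^e}',\beta^e}$ identifies its primed and unprimed labels; substituting these deltas collapses each such pair to a single summed label shared by both copies of $N_v$, which is precisely the self-contraction recorded by the summation range $e \in E_v \setminus \{\{v,b\}\}$ in the claimed formula. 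The surviving delta factors and summation indices over $E_Y^{(\hat e)}\setminus\{\hat e\}$ and over $\{c\}\cup p_{\hat e}$ are then exactly those of \cref{theHJ} for $\hat J = J_{\hat e}$, so the residual contraction, in which $\hat e$ is left free, equals $H^{(J_{\hat e})}_{{\beta^{\hat e}}',\beta^{\hat e}}$. Contracting this against the two copies of $N_v$ over $\hat e$ yields the asserted formula.

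I would finish by handling the degenerate case $\mathrm{pred}_c(v) = c$, i.e. empty $p_{\hat e}$, in which the residual is the root-level block $H^{(J_{\{c,v\}})}$ supplied directly by \cref{parthle0}; this is the base of the root-to-leaves recursion. The main obstacle is entirely organizational: one must keep the tripartition of $E_v$ into the parent edge $\hat e$, the target edge $\{v,b\}$, and the remaining descendant edges consistent with the interior/boundary classification of both subtrees, and verify that every summed label and every Kronecker delta in \cref{theHJ} for $J_{\{v,b\}}$ is accounted for by either the residual $H^{(J_{\hat e})}$ or the single node factor for $v$. Given \cref{theHJ}, there is no remaining analytic content.
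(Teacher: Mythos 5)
Your proposal is correct and matches the paper's own argument: the paper likewise proves the recursion by applying the closed form \cref{theHJ} (in compact notation, \cref{app:theHJ}) to both $H^{(J_{\{v,b\}})}$ and $H^{(J_{\hat{e}})}$ and carrying out exactly the edge bookkeeping you describe --- the path extends by the single vertex $v$, the edge $\hat{e}$ turns interior, and the remaining edges at $v$ become boundary edges whose deltas collapse primed and unprimed labels. The only cosmetic difference is direction (you expand the left-hand side and recognize the right-hand side, while the paper expands the right-hand side and collapses it to the closed form of the left), which is immaterial for an equality.
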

\begin{proof}
 See \cref{app:sec:brancheval}.
\end{proof}
\section{Numerical Experiments}\label{sec:numexp}
The following \cref{sec:sols,sec:oper,sec:solmeth,sec:expsetup} specify terminology and configurations referred to in the subsequent \cref{exp00,exp0,exp0_,exp1,exp2} in \cref{aasrm,altasrm,sec:lsasrm}. The presentation of results is further laid out in \cref{sec:presres}.
For simplicity, the mode sizes $\{n_\mu\}_{\mu \in [d]}$ are chosen uniformly as $\overline{n} \in \N$ in all experiments. For the corresponding \textsc{Matlab} code, please contact the author. 

\subsection{Reference solutions, measurements vectors and family \texorpdfstring{$\mathcal{K}$}{K}}\label{sec:sols}
Each measurement vector is constructed via a (not necessarily sought for) reference solution with ranks $r_\rs \in \N^{\mathcal{K}}$, 
which in turn relies on a randomly generated representation, 
\begin{align*}
 y = \mathcal{L}(X^\rs) \in \R^\ell, \quad X^\rs  = \tau_{r_\rs}(N^\rs) \in \mathcal{L}^{-1}(y) \cap V_{\leq r_\rs}^\mathcal{K}.
\end{align*} 
All entries of the components $\{N^\rs_v\}_{v \in V^\rs}$ are assigned independent, normally distributed entries. For simplicity\footnote{Our considered \IRLS{} algorithms neither use uniform ranks nor are provided any information on $r_\rs$. For further related, extensive tests on rank adaptivity, we refer to \cite{Kr19_Sta,Kr20_Tre}.} and to limit the amount of randomness, we also choose the components $\{r_\rs^{(J)}\}_{J \in \mathcal{K}}$ uniformly, as $\overline{r}_\rs \in \N$. We distinguish between four different types.

\paragraph{Tucker format}
With $\mathcal{K} = \mathcal{K}_{\mathrm{Tucker}} = \{\{1\},\ldots,\{d\}\}$, the components of the representation $\{N^\rs\}_{v \in V^\rs}$ follow the scheme in \cref{example:Tucker}.

\paragraph{Balanced, binary hierarchical Tucker format (bbHT)}
A balanced, binary hierarchical Tucker format can be defined by the property
of $\mathcal{K} = \mathcal{K}_{\mathrm{bbHT}}$ to be exhaustive (cf. \cref{sec:exhaustive})
and to minimize the maximal distance of any two vertices $v,w \in [d]$
within $G_{\mathcal{K}}$ (that is, the depth of the rooted tree, cf. \cite{Gr10_Hie}).

\paragraph{Exponentially declining singular values}
Firstly, a bbHT representation as defined above is generated.
As second step, all singular values $\sigma^{(J)}$, $J \in \mathcal{K}$,
are manipulated such they decline exponentially.
In explicit, for a constant $s_{(\mathrm{expfac})} \in (0,1)$,
it is $\sigma^{(J)}_i \approx \max(\sigma_{\min},s_{(\mathrm{expfac})}^{x})$, $i = 1,\ldots,\overline{r}_\rs$, $J \in \mathcal{K}$,
where each $x$ is an independent random, normally distributed value
and $\sigma_{\min} > \varepsilon > 0$ (cf. \cref{sec:expsetup}) is a lower bound.
We denote such reference solutions by the abbreviated \textit{exp.dec.bbHT}. 

\paragraph{Canonical polyadic (CP) decomposition}\label{para:CP}
For $\overline{r}_\rs \in \N$, the 
reference solution does here not rely on $\mathcal{K}$, but is generated as sum of $\overline{r}_\rs$ elementary tensors,
$X^\rs_{\alpha_1,\ldots,\alpha_d} := \tau_{\overline{r}_\rs}(\phi^\rs) = \sum_{\gamma = 1}^{\overline{r}_\rs} (\phi^\rs_1)_{\alpha_1,\gamma} \ldots (\phi^\rs_d)_{\alpha_d,\gamma}$,
where $(\phi^\rs_\mu) \in \R^{[n_\mu] \times [\overline{r}_\rs]}$, for $\mu = 1,\ldots,d$.
The corresponding graph is a hypertree, and the set $\mathrm{image}(\tau_{\overline{r}_\rs})$ of at most rank $\overline{r}_\rs$ tensors is a semi-algebraic subset of $V^{\mathcal{K}_{\max}}_{\leq r_\rs}$ (cf. \cref{HTtensors})
for the in that case defined, non-hierarchical family $\mathcal{K}_{\max} := \{ J \subsetneq [d] \mid J \neq \emptyset \}$, given $r_\rs^{(J)} \equiv {\overline{r}_\rs}$, $J \in \mathcal{K}_{\max}$.

\subsection{Operators}\label{sec:oper}
We consider three types of operators $\mathcal{L}$, where in each case
$\mathcal{L}(X) := L\ \mathrm{vec}(X)$ is based on the tensor $L \in \R^{\ell \times n_1 \ldots n_d}$.
\paragraph{(Full) Gaussian operator}\label{gaussop}
With a Gaussian operator, we refer to a randomly generated tensor $L \in \R^{\ell \times n_1 \ldots n_d}$ with independent, normally distributed entries.

\paragraph{Gaussian low rank operator}\label{gaussop2}
For (low) uniform ranks $r_L^{(J)} \equiv \overline{r}_L \in \N$, $J \in \mathcal{K}$,
the operator is defined through the representation of $L := \rho_{r_L}(\{L_v\}_{v \in V^\rs})$ (cf. \cref{sec:Ldecomp}). Each component therein are assigned independent, normally distributed entries.
\paragraph{Random sampling operator}\label{samplop}
As sampling operator, we denote
  $\mathcal{L}(X) := \{ X_{p_i} \}_{i = 1}^\ell$,
 for uniformly randomly drawn indices $\{p_1,\ldots,p_\ell\} \subset \bigtimes_{\mu = 1}^d [n_\mu]$.
 Note that sampling operators can trivially be decomposed, for $r^{(J)}_L = 1$, $J \in \mathcal{K}$.

\subsection{Solution methods}\label{sec:solmeth}

Based on a sufficiently large starting value $\gamma^{(0)} > 0$, we choose $\gamma^{(i)} = \nu \gamma^{(i-1)}$, where $\nu < 1$ remains constant throughout each single run of an algorithm. 
We consider the following types of optimization.

\paragraph{Full, image based (\texorpdfstring{\IRLS{}-$0\mathcal{K}$}{IRLS-0K})}

As in \cref{IRLS-0K}, the full tensor is optimized based on the (literally interpreted)
image update formula \cref{tensorX_W} without further modification (\cref{alg:irlsmrreintws} for $\mathcal{S}_i \equiv \emptyset$, $i \in \N_0$). When instability threatens to occur, 
the equivalent kernel based update \cref{eq:kernelupdate}
for $X_0 = X^{(0)}$ is applied, with $X^{(0)}$ as in \cref{gammainf}.

\paragraph{Full, relaxed}

The relaxed constraints described in \cref{sec:relaxed} are utilized, but without subspace restrictions or weight switching (\cref{alg:relT} with $\mathcal{T}_i \equiv \mathcal{L}^{-1}(y)$, $\mathcal{S}_i \equiv \emptyset$, $i \in \N_0$).
In this case, the residual $\|\mathcal{L}(X) - y\|$ is expected to converge to $0$ parallel to the decline of $\gamma$, but this is not guaranteed.

\paragraph{Alternating (\texorpdfstring{\AIRLS{}-$0\mathcal{K}$}{AIRLS-0K})}

We apply the representation based, necessarily relaxed, alternating optimization (\cref{alg:relT} for $\mathcal{T}_i = \mathcal{T}_{c_i}(\tau_r(N^{(i)}))$ and $\mathcal{S}_i = \mathcal{S}_{c_i}$, $i \in \N_0$) further discussed in \cref{AIRLS}. 
The update formulas for the single components make use of the branch-wise evaluations as derived in \cref{sec:brancheval}. Whether the (maximal) ranks $r \in \N^\mathcal{K}$ of the iterate, that is, the sizes of $\{N_v\}_{v \in V}$, are fixed
or adapted, as well as the potential use of other heuristics laid out in \cref{sec:practheurasp}, is specified in the respective experiments.

\paragraph{Neigh}
The same algorithm as aboves \AIRLS{}-$0\mathcal{K}$ is applied, but
in each update of the node $N_c$, $c \in V$, only weights corresponding to $J_e \in \mathcal{K}^{\mathcal{S}_c}$, $e \in E_c$, are included (cf. \cref{parthle0}) in order to reduce the computational complexity. 
This reduction of paths yields the variant closest to our priorly introduced algorithm \textsc{SALSA},
and in particular the minimal number of weights in each the update of $N_c$, $c \in V$, for which the rank adaption stability property, as further introduced in \cite{Kr19_Sta}, still holds true. 
\paragraph{Plain ALS without reweighting}
In one instance in \cref{exp1}, we also compare to the plain alternating least squares (ALS) residual minimization (\cref{defXwceq} for $\omega = \gamma = 0$) for fixed ranks $r = r_\rs \in \N^\mathcal{K}$.
This algorithm is thus granted additional, in practice generally unavailable information and does not adapt ranks.
\subsection{Experimental setup and evaluation}\label{sec:expsetup}%
In order to evaluate each output $X^\alg$, we compare its \textit{non-neglectable} singular values to those of $X^\rs$.
We define
\begin{align*}
 \mathrm{det}^\mathcal{K}_{n,\gamma,\varepsilon}(X) := \prod_{J \in \mathcal{K}}
 \mathrm{det}_{n_J,\gamma,\varepsilon}(X^{[J]}), \quad X \in \R^{n_1 \times \ldots \times n_d},
\end{align*}
where the matrix version is as in \cite{Kr21_Asy} given by
\begin{align*}
 \mathrm{det}^2_{m,\gamma,\varepsilon}(A) & := \gamma^{m-\mathrm{rank}_{\varepsilon}(A)} \prod_{i = 1}^{\mathrm{rank}_{\varepsilon}(A)} (\sigma_i(A)^2 + \gamma), 
\end{align*}
for $\mathrm{rank}_{\varepsilon}(A) := \max \{i \in [m] \mid \sigma_i(A) > \epsilon \cdot \|A\|_F \}$. Therein, we choose $\epsilon := 10^{-6}$.
We firstly examine the residual norm $\|\mathcal{L}(X^\rs) - y\|_F$, secondly compare the approximate ranks, and lastly compare the products of singular values. 
The latter two aspects are reflected by the limit of the quotient
\begin{align*}
 \mathcal{Q}_\varepsilon(X^\alg,X^\rs) := 
 \lim_{\gamma \searrow 0} \frac{\mathrm{det}^\mathcal{K}_{n,\gamma,\varepsilon}(X^\alg)}{\mathrm{det}^\mathcal{K}_{n,\gamma,\varepsilon}(X^\rs)} 
  \in [0,0.98] \cup (0.98,1.005) \cup [1.005,\infty].
\end{align*}
The three intervals are related to the categorization into improvements, successes or the two types of failures as outlined below, where
the limits $0$ or $\infty$ are reached if and only if $\sum_{J \in \mathcal{K}}\mathrm{rank}_{\varepsilon}((X^\alg)^{[J]})$ and $\sum_{J \in \mathcal{K}} \mathrm{rank}_{\varepsilon}((X^\rs)^{[J]})$ differ.
\paragraph{Post iteration}\label{sec:postiter}
In order to avoid misjudgment, in cases where the tensor $X^\alg$ may be an improving solution (though that seldomly happens here),
we apply a post iteration analogous to the one discussed in the matrix case \cite{Kr21_Asy} in order to allow the parameter $\varepsilon$ to be reduced to machine precision.
\paragraph{Details of comparison}
As in \cite{Kr21_Asy}, if $\|\mathcal{L}(X^{\alg}) - y\| > 10^{-6} \|y\|$ or if for the quotient, it holds $\mathcal{Q}_\varepsilon(X^{\alg},X^\rs) = \infty$, then the result is considered a \textit{strong failure}.
If $\|\mathcal{L}(X^{\alg}) - y\| \leq 10^{-6} \|y\|$, then on the one hand we refer to $1.005 \leq \mathcal{Q}_\varepsilon(X^{\alg},X^\rs) < \infty$ as \textit{weak failure}.
On the other, for $0.98 < \mathcal{Q}_\varepsilon(X^{\alg},X^\rs) < 1.005$, we consider the result \textit{successful}, while for $\mathcal{Q}_\varepsilon(X^{\alg},X^\rs) \leq 0.98$,
 we say the result is an \textit{improvement}, subject to the consideration above.
\paragraph{Sensitivity analysis}
 With the exception of \cref{exp1}, we lower the meta parameter $\nu = \nu_k = \sqrt{\nu_{k-1}}$ (cf. \cref{sec:solmeth}), starting with $\nu_0 = 1.2$, and rerun the respective algorithm from the start until the result is not a \textit{failure}.
However, after too many reruns $k > k_{\max}$, we give up and thus either achieve a \textit{weak} or \textit{strong} \textit{failure} depending on the result for $k = k_{\max}$.
All other meta parameters for each algorithm are common to all respective experiments.
\subsection{Presentation of results}\label{sec:presres}
Each but \cref{exp00,exp1} is reflected upon in three different ways as summarized in \cref{metatable}.
\paragraph{ASRM/recovery tables}
For each instance, we list the percentual numbers of ASRM improvements, successes or fails as defined in \cref{sec:expsetup}. Successes are further distinguished regarding recoveries, whether $\|X^\alg - X^\rs\|_F \leq 10^{-4} \|X^\rs\|_F$. In near all cases where this is fulfilled, the relative residual even falls below $10^{-6}$ (see \cref{sec:visres}), in which case the algorithm stops automatically\footnote{Needless to say, this is the only point at which the reference solution itself is used within the algorithm, and only done in order to save a considerable amount of unnecessary computation time.}.
Note that both improvements as well as fails with respect to ASRM naturally nearly exclude recoveries with accuracy $10^{-4}$, and always so for $10^{-6}$.
\paragraph{ASRM/recovery figures}
More distinguished visualizations of the results underlying the above mentioned tables  can be found in \cref{sec:visres} as described therein.
\paragraph{$\gamma$-decline sensitivity}
A depiction of results regarding the sensitivity analysis outlined in \cref{sec:expsetup} is covered in \cref{sec:visres} as well.

\subsection{Observing the theoretical phase transition for generic recoveries}

\begin{experiment}\label{exp00}
 For $d = 4$, $\overline{n} = 5$ and $\overline{r}_\rs = 3$,
 we consider the \ASRMK{}-$\mathcal{K}_{\mathrm{bbHT}}$ problem based on \textit{Gaussian measurements} for
 reference solutions given via \textit{bbHT} representations for $\ell \in \{68,69,70\}$.
 The solution method in both cases utilizes \textit{full, image based} updates (cf. \cref{sec:sols}).
 Each constellation is repeated $100$ times, for a comparatively large value of $k_{\max} = 10$. The results are covered in \cref{tab00}.
\end{experiment}%
The dimension of the given bbHT variety is $\dim(V^{\mathcal{K}_{\mathrm{bbHT}}}_{\leq r_\rs}) = 4 \overline{n} \overline{r}_\rs + 2 \overline{r}_\rs^3 - 5 \overline{r}_\rs^2 = 69$ (cf. \cref{manidim}). 
The value $\ell = \dim(V_{\leq \overline{r}_\rs}^{\mathcal{K}_{\mathrm{bbHT}}}) + 1$ (which here is $\ell = 70$) in turn provides
the minimal sufficient number of \textit{generic}\footnote{To be more precise, \textit{generic} in that context is an algebraic property that is stronger than the ones that stem from analysis or probability theory, but roughly similar.} measurements (thus not including sampling) to provide $\mathcal{L}^{-1}(\mathcal{L}(X^\rs)) \cap V_{\leq \overline{r}_\rs}^{\mathcal{K}_{\mathrm{bbHT}}} = \{ X^\rs \}$ for \textit{generic} $X^\rs \in V_{\leq \overline{r}_\rs}^{\mathcal{K}_{\mathrm{bbHT}}}$, as more generally proven in \cite{BrGeMiVa21_Alg}.
We can indeed observe (see \cref{tab00}) that for $\ell = 69$, multiple solutions are found as verified through the post iteration process up to machine accuracy. For the value $\ell = 70$ in turn, no duplicate solutions seem to exist. The one improving solution
as well as the two weak failures are not the reference solution, though in fact neither within $V_{\leq r}^{\mathcal{K}_{\mathrm{bbHT}}}$ for $r = r_\rs$ but $r = \tilde{r}$, $\tilde{r}^{(\{1,2\})} = 3$, $(\tilde{r}^{(\{1\})},\ldots,\tilde{r}^{(\{4\})}) = (2,2,4,4)$. From the perspective of a dimension minimization (cf. \cref{Xastgeneral}) in turn, not even the improving result would be preferable as $\mathrm{dim}(V_{\leq \hat{r}}^{\mathcal{K}_{\mathrm{bbHT}}}) = 71$ ($V_{\leq \hat{r}}^{\mathcal{K}_{\mathrm{bbHT}}} \nsupseteq V_{\leq r_\rs}^{\mathcal{K}_{\mathrm{bbHT}}}$).
\def\externaltablecaption#1#2#3{\label{#1}table as specified in \cref{sec:presres} for \cref{#2} (see \cref{#3} for more details)}
\externaltable{htb!}{table_n2_tensor_conj_new}{\externaltablecaption{tab00}{exp00}{res00b}}%
\subsection{Affine sum-of-ranks minimization}\label{aasrm}
\begin{experiment}\label{exp0}
 For $d = 4$, $\overline{n} = 5$ and $\overline{r}_\rs = 3$,
 we consider the \ASRMK{}-$\mathcal{K}$ problem based on \textit{samples} or \textit{Gaussian measurements} and
 reference solutions given via \textit{bbHT} representations for $\ell \in \{83,111,138\}$ and  $\mathcal{K} = \mathcal{K}_{\mathrm{bbHT}}$ or by \textit{CP} decompositions for $\ell \in \{62,82,102\}$ and $\mathcal{K} = \mathcal{K}_{\max}$.
 The solution method in both cases utilizes \textit{full, image based} updates
based on the respective families $\mathcal{K}$ (cf. \cref{sec:sols}).
 Each constellation is repeated $100$ times, for $k_{\max} = 8$. The results are covered in \cref{tab0,res0,res0b}.
\end{experiment}
\externaltable{htb!}{table_7_tensor_by_setting_new}{\externaltablecaption{tab0}{exp0}{res0b}}%
The dimension or even the more particular structure of the variety $V_{\leq r_\rs}^{\mathcal{K}_{\max}}$, for $\mathcal{K}_{\max} = \{ J \subsetneq [d] \mid J \neq \emptyset \}$, $d \geq 4$, as applied in the CP case (cf. \cref{para:CP}), is unknown to the best of our knowledge. 
While real tensors of at most rank $\overline{r}_\rs$ do not form varieties, 
complex ones with at most this border rank do, here with a dimension of
$\mathrm{dim}(V_{\leq \overline{r}_\rs,\mathbb{C}}) = \overline{r}_\rs (d (\overline{n}-1) + 1) = 51$ (cf. \cite{BrCeDuHeMaSaVeYu21_Non,QiCoLi16_Sem,ChOtVa14_AnA}). Though we assume this dimension to be lower than the one for $\mathcal{K}_{\max}$, we take this smaller value as reference.
In that sense, the considered values $\ell$ are each (rounded) multiples $c_{\mathrm{mf}} \in \{1.2,1.6,2\}$.
To our surprise, if successeful, the CP reference solution is (near perfectly) recovered even for $\ell = 62$, considering that this value is smaller than $69 \equiv \mathrm{dim}(V_{\leq r_\rs}^{\mathcal{K}_{\mathrm{bbHT}}})$ for every exhaustive hierarchical family $K_{\mathrm{bbHT}}$. 
One possible explanation would be that $\mathrm{dim}(V_{\leq r_\rs}^{\mathcal{K}_{\max}})$ is lower or equal to $61$, but further investigation remains subject to future work.
While we can not, as theory provides, expect generic completions in case of sampling problems,
the failures with respect to \ASRMK{} are subject of \IRLS{}-$0\mathcal{K}$ itself.
In particular, slower rates of decline $\nu$ (cf. \cref{res0}) may be required, 
and allow for better results for both sampling and Gaussian measurements as suggested
by \cref{res00}.
Though already for $c_{\mathrm{mf}} = 1.2$, the rate of decline seems to suffice.
\subsection{Alternating, affine sum-of-ranks minimization}%
\begin{experiment}\label{exp1}
 For $d = 4$, $\overline{n} = 5$, $\overline{r}_\rs = 3$ and $\ell \in \{126,168,210\}$
 we consider the \ASRMK{}-$\mathcal{K}_{\mathrm{Tucker}}$ problem based on \textit{samples} or rank $\overline{r}_L = 1$ \textit{Gaussian measurements} for reference solutions given through \textit{Tucker} representations.
 We compare the following four solution methods: 
 \begin{enumerate}
  \item[(a)] \textit{full, image based}
  \item[(c)] \textit{alternating}, based on fixed, maximally feasible ranks $r^{(J)} = 5$, $J \in \mathcal{K}_{\mathrm{Tucker}}$ 
  \item[(d)] \textit{alternating}, with adaptive ranks $r^{(J)} \in [5]$, $J \in \mathcal{K}_{\mathrm{Tucker}}$ (cf. \cref{sec:explra})
  \item[(e)] \textit{plain, ALS without reweighting}, based on the, a-priorly provided, fixed ranks  $r^{(J)} = \overline{r}_\rs = 3$, $J \in \mathcal{K}_{\mathrm{Tucker}}$.
 \end{enumerate}
A fixed rate $\nu = 1.002^{-1}$ of decline is used (applicable to the first three methods).
 Each constellation is repeated $100$ times, for which the results are covered in \cref{tab1,res1b}.
\end{experiment}%
\externaltable{htb!}{table_8_tensor_comparison}{\externaltablecaption{tab1}{exp1}{res1b}}%
The degrees of freedom within a Tucker decompositions for $d = 4$ in this setting is $\dim(V^\mathcal{K_{\mathrm{Tucker}}}_{\leq r_\rs}) =  \overline{r}_\rs^4 + 4 \overline{n} \overline{r}_\rs - 4 \overline{r}_\rs^2 = 105$ (cf. \cref{manidim}). The number of measurements $\ell$ are (rounded) multiples $c_{\mathrm{mf}} \in \{1.2,1.6,2\}$ of such.
As in \cref{exp0_}, there is nearly no difference between the version using fixed ranks or adaptive ranks, but both instances are slightly worse than the full version using unrelaxed constraints (note that here, these methods use the same, fixed rate of decay $\nu = 1.002^{-1}$). Plain alternating least squares on the other hand (even though only in that case, the ranks of the reference solution are provided) is significantly worse than the other methods, also for larger numbers of measurements. 
\subsection{Large scale, alternating \ASRMK{}}\label{sec:lsasrm}
\begin{experiment}\label{exp2}
For $d = 8$, $\overline{n} = 20$, $\overline{r}_\rs = 5$ and $\ell \in \{6500,13000,19500,26000\}$, we consider the \ASRMK{}-$\mathcal{K}_{\mathrm{bbHT}}$ problem based on \textit{samples}, rank $\overline{r}_L = 1$ or rank $\overline{r}_L = 2$ \textit{Gaussian measurements} for
reference solutions given via \textit{bbHT} representations with \textit{exponentially declining} singular values, $s_{(\mathrm{expfac})} = \frac{1}{3}$. For Gaussian measurements, we also consider unmodified singular values.
As solution method, we apply \textit{alternating} optimization with explicit rank adaption (limited only by $r^{(J)} \leq 8$, $J \in \mathcal{K}_{\mathrm{bbHT}}$) as
well as the applicable heuristics laid out in \cref{sec:practheurasp}. The maximal length of paths is either unrestricted, or limited to \textit{neighbors}.
Each constellation is repeated $100$ times, for $k_{\max} = 5$. The results are covered in \cref{tab2,tab2c,res2,res2b,res2c,res2d}.
\end{experiment}%
\externaltable{htb!}{table_9_alternating_tensor_by_setting}{\externaltablecaption{tab2}{exp2}{res2b}}%
\externaltable{htb!}{table_9b_alternating_tensor_by_setting_expdec}{\externaltablecaption{tab2c}{exp2}{res2d}}% 
The degrees of freedom within $8$-dimensional bbHT decompositions in this setting is $\dim(V^{\mathcal{K}_{\mathrm{bbHT}}}_{\leq r_\rs}) = 8 \overline{n} \overline{r}_\rs + 6 \overline{r}_\rs^3 - 13 \overline{r}_\rs^2 = 1225$ (cf. \cref{manidim}),
while $\ell = 6500$ constitutes a fraction of about $2.5 \cdot 10^{-7}$ of the total size $\overline{n}^d = 2.56 \cdot 10^{10}$ of the tensor. 
Due to the long runtime for values $k > 5$, it yet remains speculation whether the restriction of paths to neighboring nodes does result in a loss of approximation quality or, as in other cases, rather a need for a lower parameter $\nu$ (cf. \cref{res2,res2c}). The same might hold true for the completion problem considered here.
Rank $\overline{r}_L = 2$ Gaussian operators seem in fact to generate easier problems than rank $\overline{r}_L = 1$ ones, at least judging from the given results. 
On the other hand, it becomes clear that exponentially decaying singular values pose significantly easier problems.

\section{Conclusions and outlook}%\label{sec:concl}
We have shown that despite subtle differences, the overall structure of the log-det approach towards ARM can be generalized to the ASRM tensor setting. The global convergence of minimizers of the log-det sum-of-ranks function can likewise be concluded via the priorly applied nested minimization scheme. Even subject to the additionally considered switching between complementary subsets in $\mathcal{K}$, the \IRLS{}-$0\mathcal{K}$ algorithm inherits analogous local convergence properties, in particular with respect to the decline of the regularization parameter $\gamma \searrow 0$.
Thereafter, we have laid out that despite the relaxation of the affine constraint, as well as the iterative restriction to admissible subspaces, \IRLS{}-$0\mathcal{K}$ remains faithful to a monotone minimization of the corresponding objective function. In particular, these modifications allow a tree tensor network based, alternating evaluation \AIRLS{}-$0\mathcal{K}$, with a non-exponential, low computational complexity based on branch-wise evaluations. 
In numerical experiments, we have demonstrated that it can also practically suffice if only the number of Gaussian measurements exceeds the dimension of the lowest rank variety, the reference solution truth is contained in, by one. Further, we have shown that \AIRLS{}-$0\mathcal{K}$ is only marginally less successful than its non-alternating version \IRLS{}-$0\mathcal{K}$, while cleary superior towards ordinary, unregularized ALS. In moderately large cases, we could observe that $1.2$ times the minimally necessary number of measurement in near all cases suffices to recover the reference solution. For large scale problems, it may yet show that a slower decline of $\gamma$ could allow to further reduce the number of required measurements.

\begingroup
\appendix
\FloatBarrier
\section{(Remaining proof of \texorpdfstring{\cref{declinelemmatensor}}{theorem})} \label{sec:minorproofs}
\begin{proof} (of \cref{declinelemmatensor}) 
Throughout the proof, we abbreviate $W^\mathcal{K} := \{W^{(J)}\}_{J \in \mathcal{K}}$ as well as the iterates $W^{(i)} := \{W^{(i,J)}\}_{J \in \mathcal{K}^{\mathcal{S}_i}}$.\\
 $(i)$: Let $\Delta^{(i)} = \gamma^{(i)} ( \sum_{J \in \mathcal{K}^{\mathcal{S}}} n_J - \sum_{J \in \mathcal{K}^{\mathcal{S}_i}} n_J)$.
 Independent of $\mathcal{S} \subset \mathcal{K}$, we have
 \begin{align*}
 f^{\mathcal{K}^\mathcal{S}}_{\gamma^{(i)}}(X^{(i)}) & \overset{(a)}{=} f^{\mathcal{K}^{\mathcal{S}_i}}_{\gamma^{(i)}}(X^{(i)}) + \Delta^{(i)} 
  \overset{(b)}{=} J^{\mathcal{K}^{\mathcal{S}_i}}_{\gamma^{(i)}}(X^{(i)},W^{(i)}) + \Delta^{(i)} \\
 & \overset{(c)}{\geq} J^{\mathcal{K}^{\mathcal{S}_i}}_{\gamma^{(i)}}(X^{(i+1)},W^{(i)}) + \Delta^{(i)} 
  \overset{(d)}{\geq} J^{\mathcal{K}^{\mathcal{S}_i}}_{\gamma^{(i)}}(X^{(i+1)},W^{\mathcal{K}^{\mathcal{S}_i}}_{\gamma^{(i)},X^{(i+1)}}) + \Delta^{(i)} \\
 & \overset{(e)}{=} f^{\mathcal{K}^{\mathcal{S}_i}}_{\gamma^{(i)}}(X^{(i+1)}) + \Delta^{(i)} \overset{(f)}{=} f^{\mathcal{K}^\mathcal{S}}_{\gamma^{(i)}}(X^{(i+1)}) \overset{(g)}{\geq} f^{\mathcal{K}^\mathcal{S}}_{\gamma^{(i+1)}}(X^{(i+1)}).
 \end{align*}
The steps $(a)$ to $(g)$ are provided by: $(a)$ \cref{sec:complwe}, $(b)$ \cref{returntoffct}, $(c)$ $X^{(i+1)} = X^{\mathcal{K}^{\mathcal{S}_i}}_{W^{(i)}}$
is optimum in $X$ \cref{tensorX_W}, $(d)$ $W^{\mathcal{K}^{\mathcal{S}_i}}_{\gamma^{(i)},X^{(i+1)}}$ is the respective optimum in $W$ \cref{tensorupdateW},
$(e)$ \cref{returntoffct}, $(f)$ \cref{sec:complwe}, $(g)$ $\frac{\partial}{\partial \gamma} f^{\mathcal{K}^\mathcal{S}}_\gamma(X) \geq 0$, $\mathcal{S} \subset \mathcal{K}$, for all $X$.\\
 $(ii)$: Since (cf. \cref{detexptensor})
    $|\mathcal{K}| \gamma^{(\sum_{J \in \mathcal{K}} n_J)  - 1} \|X\|_F^{2} \leq \prod_{J \in \mathcal{K}} \prod_{i=1}^{n_J} (\sigma_i^{(J)}(X)^2 + \gamma)  \leq  \exp(f^{\mathcal{K}}_{\gamma}(X))$,
 it follows due to $(i)$ that 
  $|\mathcal{K}| \|X^{(i)}\|_F^{2} \leq (\gamma^{(i)})^{1 - \sum_{J \in \mathcal{K}} n_J} \exp(f^\mathcal{K}_{\gamma^{(1)}}(X^{(1)}))$.
 As $\gamma^{(i)}$ does not converge to zero, the sequence $X^{(i)}$ remains bounded.\\
 $(iii/1)$: %As in $(i)$, we have \commentblue{no $W$ as index in scalar product, change this}
For $\mathcal{S} = \mathcal{S}_i$ (and thus $\Delta^{(i)} = 0$), the steps $(d)$ to $(g)$ in $(i)$
provide that $J^{\mathcal{K}^{\mathcal{S}_i}}_{\gamma^{(i)}}(X^{(i+1)},W^{(i)}) \geq f^{\mathcal{K}^{\mathcal{S}_i}}_{\gamma^{(i+1)}}(X^{(i+1)})$.
 With $\widehat{\mathcal{W}}^{(i)}$ as defined in \cref{widehatdef}, we then have
 \begin{align*}
   f^{\mathcal{K}^{\mathcal{S}_i}}_{\gamma^{(i)}}(X^{(i)}) - f^{\mathcal{K}^{\mathcal{S}_i}}_{\gamma^{(i+1)}}(X^{(i+1)}) 
  \geq &\ J^{\mathcal{K}^{\mathcal{S}_i}}_{\gamma^{(i)}}(X^{(i)},W^{(i)}) - J^{\mathcal{K}^{\mathcal{S}_i}}_{\gamma^{(i)}}(X^{(i+1)},W^{(i)}) \\
  = &\ \langle X^{(i)}, \widehat{\mathcal{W}}^{(i)}(X^{(i)}) \rangle - \langle X^{(i+1)}, \widehat{\mathcal{W}}^{(i)}(X^{(i+1)}) \rangle \\
  = &\ \langle X^{(i)} - X^{(i+1)}, \widehat{\mathcal{W}}^{(i)}(X^{(i)} + X^{(i+1)}) \rangle.
 \end{align*}
  As $ \widehat{\mathcal{W}}^{(i)}(X^{(i+1)}) \perp X^{(i)} - X^{(i+1)} \in \mathrm{kernel}(\mathcal{L})$ (as provided by \cref{tensorkernelL}) we have
   \begin{align*}
  \langle X^{(i)} - X^{(i+1)}, \widehat{\mathcal{W}}^{(i)}(X^{(i)} + X^{(i+1)}) \rangle & = \langle X^{(i)} - X^{(i+1)}, \widehat{\mathcal{W}}^{(i)}(X^{(i)} - X^{(i+1)}) \rangle \\
  & \geq \| (X^{(i)} - X^{(i+1)}) \|^2_F\  \lambda_{\min}({\widehat{\mathcal{W}}^{(i)}}).
 \end{align*}
 Since $\mathcal{W}^{(i,J)} \succ 0$, $J \in \mathcal{K}^{\mathcal{S}_i}$, the eigenvalue can be bounded via
 \begin{align*}
  \lambda_{\min}({\widehat{\mathcal{W}}^{(i)}}) & = \lambda_{\min}(\sum_{J \in \mathcal{K}^{\mathcal{S}_i}} \mathcal{W}^{(i,J)}) \geq
  \sum_{J \in \mathcal{K}^{\mathcal{S}_i}} \lambda_{\min}(W^{(i,J)}) \\
  & =  \sum_{J \in \mathcal{K}^{\mathcal{S}_i}} \lambda_{\min}(({X^{(i)}}^{[J]} ({X^{(i)}}^{[J]})^T  + \gamma I)^{-1}) \\ 
  & = \sum_{J \in \mathcal{K}^{\mathcal{S}_i}} (\sigma_1^{(J)}({X^{(i)}})^2 + \gamma)^{-1} \geq |\mathcal{K}| \ (\|X^{(i)}\|_F^2 + \gamma)^{-1}.
 \end{align*}
 Thereby, as $\|X\|_F^2$ remains bounded due to $\gamma^\ast > 0$ and $(ii)$, there exists $c > 0$ such that
  $\| (X^{(i)} - X^{(i+1)}) \|^2_F\  \lambda_{\min}({\widehat{\mathcal{W}}^{(i)}}) \geq c \| (X^{(i)} - X^{(i+1)}) \|^2_F$.
 Summing over all $i = 1,\ldots,N$, we obtain
  \begin{align*}
  c \sum_{i = 1}^N \| (X^{(i)} - X^{(i+1)}) \|^2_F & \leq \sum_{i = 1}^N f^{\mathcal{K}^{S_i}}_{\gamma^{(i)}}(X^{(i)}) - f^{\mathcal{K}^{S_i}}_{\gamma^{(i+1)}}(X^{(i+1)}) \\
   \leq \sum_{S \subset \mathcal{K}} \sum_{i = 1}^N f^{\mathcal{K}^{\mathcal{S}}}_{\gamma^{(i)}}(X^{(i)}) - f^{\mathcal{K}^{\mathcal{S}}}_{\gamma^{(i+1)}}(X^{(i+1)}) 
   & = \sum_{S \subset \mathcal{K}} f^{\mathcal{K}^{\mathcal{S}}}_{\gamma^{(1)}}(X^{(1)}) - f^{\mathcal{K}^{\mathcal{S}}}_{\gamma^{(N+1)}}(X^{(N+1)}). 
 \end{align*}
 As for each $S \subset \mathcal{K}$, $f^{\mathcal{K}^{\mathcal{S}}}_{\gamma^{(i)}}(X^{(i)})$ remains bounded, this implies $\| (X^{(i)} - X^{(i+1)}) \|^2_F \rightarrow 0$ for $i \rightarrow \infty$.\\
$(iii/2)$: 
This part is largely independent of choices of $S \subset \mathcal{K}$ since the stationary points
of all $f_\gamma^{\mathcal{K}^\mathcal{S}}$ are equal (cf. \cref{sec:complwe}). 
Let $X^{(i_\ell)}$ be a convergent subsequence of $X^{(i)}$ with limit point $X^\ast$.
In light of \cref{stabilizertensor}, it suffices to show that $X^\ast = X^{\mathcal{K}^\mathcal{S}}_{W^\ast}$ for $W^{(\ast,J)} = W^{(J)}_{\gamma^\ast,X^\ast}$, $J \in {\mathcal{K}^\mathcal{S}}$
for one $\mathcal{S} \subset \mathcal{K}$.
Due to $(iii/1)$ so far, we have $\lim_{\ell \rightarrow \infty} X^{(i_\ell+1)} = X^\ast$. As $W^{(J)}_{\gamma,X}$, $J \in [d]$, depend continuously on $X$ and $\gamma > 0$, 
it follows that
\begin{align*}
 W^{(i_\ell,J)} = W^{(J)}_{\gamma^{(i_\ell,J)},X^{(i_\ell)}} \rightarrow_{\ell \rightarrow \infty} W^{(J)}_{\gamma^\ast,X^\ast} =: W^{(\ast,J)}.
\end{align*}
Let now $\mathcal{S}$ be one of the sets that appear infinitely often in $\{\mathcal{S}_i\}_{i \in \N_0}$ with respect to a subsubsequence $\{i_{\ell_k}\}_{k \in \N}$, $S_{i_{\ell_k}} = \mathcal{S}$, $k \in \N$. Then as $X^{\mathcal{K}^\mathcal{S}}_W$ depends continuously on $W^{(J)}$, $J \in \mathcal{K}^\mathcal{S}$, the last remaining step is shown by
\begin{align*}
 X^\ast \leftarrow_{k \rightarrow \infty} X^{(i_{\ell_k}+1)} = X^{\mathcal{K}^{\mathcal{S}}}_{W^{(i_{\ell_k})}} \rightarrow_{k \rightarrow \infty} X^{\mathcal{K}^{\mathcal{S}}}_{W^\ast}
\end{align*}
$(iv)$: This part is word for word the same as in \cite{Kr21_Asy}.
\end{proof}

\endgroup
\section*{Acknowledgments}
The author would like to thank Maren Klever and Lars Grasedyck for fruitful discussions,
as well as Paul Breiding and Nick Vannieuwenhoven for conversations on generic recoverability within varities.

\bibliographystyle{siam}
\bibliography{IRLS_tensor_bibliography}

\endgroup

%%%% supplementary material

\newpage
\setcounter{section}{0}
\setcounter{equation}{0}
\setcounter{figure}{0}
\setcounter{table}{0}
\setcounter{page}{1}
\makeatletter
\renewcommand{\theequation}{SM\arabic{equation}}
\renewcommand{\thefigure}{SM\arabic{figure}}
\renewcommand{\thetable}{SM\arabic{table}}
\renewcommand{\thesection}{SM\arabic{section}}
\renewcommand{\thepage}{SM\arabic{page}}
\renewcommand{\thetheorem}{SM\arabic{theorem}}
\renewcommand{\thesubsection}    {\thesection.\arabic{subsection}}
\renewcommand{\thesubsubsection} {\thesubsection.\arabic{subsubsection}}
\renewcommand{\theparagraph}     {\thesubsubsection.\arabic{paragraph}}
\renewcommand{\thesubparagraph}  {\theparagraph.\arabic{subparagraph}}
\begin{center}
 \textbf{\MakeUppercase{Supplementary Materials:}}
% \MakeUppercase{Asymptotic Log-Det Rank Minimization Via (Alternating) Iteratively Reweighted Least Squares}}
\end{center}

% \maketitle
\section{Alternating ASRM (further experiment)}\label{altasrm}
\begin{experiment}\label{exp0_}
For $d = 4$, $\overline{n} = 5$, $\overline{r}_\rs = 3$ and $\ell \in \{69,83,111,138\}$, we consider the \ASRMK{}-$\mathcal{K}_{\mathrm{bbHT}}$ problem based on \textit{samples} for reference solutions given via \textit{bbHT} representations. We use the following four solution methods:
\begin{enumerate}
 \item[(a)] \textit{full, image based} (as already considered in \cref{exp0})
 \item[(b)] \textit{full, relaxed}
 \item[(c)] \textit{alternating}, based on fixed ranks $r^{(J)} = 5$, $J \in \mathcal{K}_{\mathrm{bbHT}}$ 
 \item[(d)] \textit{alternating}, with adaptive ranks $r^{(J)} \in [5]$, $J \in \mathcal{K}_{\mathrm{bbHT}}$ (cf. \cref{sec:explra})
\end{enumerate}
 Each constellation is repeated $100$ times, for $k_{\max} = 8$. The results are covered in \cref{tab0_,res0_,res0b_}.
\end{experiment}%
\def\externaltablecaption#1#2#3{\label{#1}table as specified in \cref{sec:presres} for \cref{#2} (see \cref{#3} for more details)}
\externaltable{htb!}{table_10_HT_completion_by_method}{\externaltablecaption{tab0_}{exp0_}{res0b_}}%%
There does not seem to be a relevant difference between full image based or relaxed optimization.
Further, only for $\ell = 83$ alternating optimization performs slightly worse for. The explicit adaption of the rank in turn likewise yields no notable difference. The quality of approximation is thus seemingly only reduced (and only slightly so) through the change to an alternating optimization. However, this effect might go stronger with increased dimensions $d$.

\newpage
\section{Visualization of numerical results}\label{sec:visres}
Each of the following even and odd numbered pair of pages contains two related visualizations of the results of one of \cref{exp0,exp0_,exp1,exp2} as summarized in \cref{metatable}. These additional visualizations are constructed as described further below.
\begin{table}[H]
 \begin{center}
  \begin{tabular}{c||c|c||c}
   experiment & $\gamma$-sensitivity & ASRM/recovery & -- table\\
   \hline\hline
      \cref{exp00} & \cref{res00} & \cref{res00b} & \cref{tab00} \\
   \hline
   \cref{exp0} & \cref{res0} & \cref{res0b} & \cref{tab0} \\
   \hline
   \cref{exp0_} & \cref{res0_} & \cref{res0b_} & \cref{tab0_} \\
   \hline   
   \cref{exp1} & $(\nu = 1.002^{-1})$ & \cref{res1b} & \cref{tab1} \\
   \hline
   \cref{exp2} & \cref{res2} & \cref{res2b} & \cref{tab2} \\
   \hline
   \cref{exp2} $(s_{(\mathrm{expfac})} = \frac{1}{3})$ & \cref{res2c} & \cref{res2d} & \cref{tab2c} \\   
  \end{tabular}
 \end{center}
\caption{\label{metatable}overview over experiments, related figures and tables}
\end{table}%
\paragraph{\texorpdfstring{$\gamma$}{gamma}-decline sensitivity}\label{par:sensitivity}
To each single trial that did not yield a failure, 
we assign the one index $k$ for which the parameter $\nu = \nu_k$ first led to a successful 
or improving run as described in \cref{sec:expsetup}. The frequencies of these indices 
as well as fails are then plotted as bars, where improvements are plotted below the x-axis.
\paragraph{ASRM/recovery figures}\label{par:recfigures}
We display the following points as \textit{button plot} (as defined below).
Given the $i$-th result $X^\alg$ 
as well as reference solution $X^\rs$, 
the x-value of the $i$-th point is given by the bounded quotient 
\begin{align*}
 x_i = \max(0.9,\min(\mathcal{Q}_\varepsilon(X^\alg,X^\rs),1.05)),
\end{align*}
Each y-value is given by 
\begin{align*}
 y_i = \min(\|X^\alg - X^\rs\|_F/\|X^\rs\|_F,1),
\end{align*}
Note that the algorithm stops automatically if that value falls below $10^{-6}$.
\paragraph{button plot}
With a button plot (with logarithmic scale in $y$), we refer to a two dimensional, 
clustered scatter plot. Therein, any circular markers with centers $(x_i,y_i)$ 
and areas $s_i$, $i = 1,\ldots,k$, that would (visually) overlap,
are recursively combined to each one larger circle $(\widehat{x},\widehat{y})$ 
with area $\widehat{s}$ according to the appropriately weighted means
\begin{align*}
\widehat{x} = \sum_{i = 1}^k \frac{s_i}{\widehat{s}} x_i,\quad \widehat{y} = \prod_{i = 1}^k y_i^{s_i/\widehat{s}},\quad \widehat{s} = \sum_{i = 1}^k s_i. 
\end{align*}
The centers of all resulting circles are indicated as crosses. Thus, if only one circle remains, then the position of that cross is given by the arithmetic mean of all initial x-coordinates and the geometric mean of all initial y-coordinates. If no disks are combined, then their centers are the initial coordinates and their areas are all equal.

\newpage
\section{Sensitivity and ASRM/recovery figures}\label{sec:sens}\mbox{}
\FloatBarrier
\def\barfigurecaption#1#2{\label{#1}Results for \cref{#2} as described in \cref{par:sensitivity}.}
\def\buttonfigurecaption#1#2{\label{#1}Results for \cref{#2} as described in \cref{par:recfigures}.}

\pdffigure{H}{bar_n2_tensor_conj_new}{\barfigurecaption{res00}{exp00}}
\pdffigure{htb!}{button_n2_tensor_conj_new}{\buttonfigurecaption{res00b}{exp00}}
\newpage\mbox{}
\pdffigure{H}{bar_7_tensor_by_setting_new}{\barfigurecaption{res0}{exp0}}
\pdffigure{htb!}{button_7_tensor_by_setting_new}{\buttonfigurecaption{res0b}{exp0}} 
\newpage\mbox{}
\pdffigure{htb!}{bar_10_HT_completion_by_method}{\barfigurecaption{res0_}{exp0_}}
\pdffigure{htb!}{button_10_HT_completion_by_method}{\buttonfigurecaption{res0b_}{exp0_}}

\pdffigure{htb!}{button_8_tensor_comparisons}{\buttonfigurecaption{res1b}{exp1}}

\pdffigure{htb!}{bar_9_alternating_tensor_by_setting}{\barfigurecaption{res2}{exp2}}
\pdffigure{htb!}{button_9_alternating_tensor_by_setting}{\buttonfigurecaption{res2b}{exp2}}

\pdffigure{htb!}{bar_9b_alternating_tensor_by_setting_expdec}{\barfigurecaption{res2c}{exp2}}
\pdffigure{htb!}{button_9b_alternating_tensor_by_setting_expdec}{\buttonfigurecaption{res2d}{exp2}}

\FloatBarrier

\section{Proof of \texorpdfstring{\cref{thm:patheval}}{theorem}}\label{technicalproof}

\def\mlcr{\\}
\def\deltatensor{\delta}
\def\AJtensor{A^{({\hat{J}})}_{\alpha_c',\{{\beta^e}'\}_{e \in E_c};\, \alpha_c,\{\beta^e\}_{e \in E_c}}}
\def\MJmatrix{M^{(\hat{J})}_{{\beta^{e_1}}',\beta^{e_1}}}
\def\Nvprime#1{(N_{#1})_{\alpha_{#1}, \{{\beta^e}'\}_{e \in E_{#1}}}}
\def\Nvtensor#1{(N_{#1})_{\alpha_{#1}, \{{\beta^e}\}_{e \in E_{#1}}}}
\def\HJmatrix#1#2{H^{(#1)}_{{\beta^{#2}}',\beta^{#2}}}
\def\HJinv#1#2{(H^{(#1)} + \gamma I)^{-1}_{{\beta^{#2}}',\beta^{#2}}}
\def\GJprime{G^{({\hat{J}})}_{\{\alpha_{J_e}\}_{e \in E_Y},\alpha_c',\{{\beta^e}'\}_{e \in E_c}}}
\def\GJtensor{G^{({\hat{J}})}_{\{\alpha_{J_e}\}_{e \in E_Y},\alpha_c,\{\beta^e\}_{e \in E_c}}}
\def\WJmatrix{W^{({\hat{J}})}_{\alpha_{{\hat{J}}}',\alpha_{{\hat{J}}}}}
\def\YJmatrix#1{Y^{(J_{#1})}_{\alpha_{J_{#1}}, \beta^{#1}}}
\def\PJprime{P^{(\hat{J})}_{\{\alpha_v\}_{v \in p},\{{\beta^{e}}'\}_{e \in \partial{E}_p}}}
\def\PJtensor{P^{(\hat{J})}_{\{\alpha_v\}_{v \in p},\{{\beta^{e}}\}_{e \in \partial{E}_p}}}
\def\PcJprime{P^{(+c,\hat{J})}_{\alpha_p,\{{\beta^e}'\}_{e \in E_Y}}}
\def\PcJtensor{P^{(+c,\hat{J})}_{\alpha_p,\{\beta^e\}_{e \in E_Y}}}
\def\BJ#1#2{B^{(J_{#1})}_{{\beta^{#2}}',\beta^{#2}}}
\def\BJtilde#1#2{\widetilde{B}^{(J_{#1})}_{{\beta^{#2}}',\beta^{#2}}}
\def\netprod{\prod}
\def\netsum#1{\sum_{#1}}

Following is the proof of \cref{thm:patheval} in minimally deviating notation. For the more elegant version, see \cref{sec:thmalt}.

\begin{proof}
 Firstly, we consider a $|E_Y| + |E_c|$ dimensional tensor representation $G^{(\hat{J})} \in \R^{\bigtimes_{e \in E_Y} [n_{J_e}]} \otimes \mathfrak{H}_{\mathfrak{m}_c}$ of $\mathcal{N}_{\neq c}: \mathfrak{H}_{\mathfrak{m}_c} \rightarrow \R^{n_1 \times \ldots \times n_d}$ and its adjoint. We can thus write
\begin{multline*}
 \AJtensor
 \mlcr = \netsum{\alpha_{J_e}\,:\, e  \in E_Y} 
 \GJprime
 \WJmatrix
 \GJtensor.
\end{multline*}
The representation $G^{(\hat{J})}$ can further be decomposed into a set of orthonormal matrices $Y^{(J_e)} \in \R^{[n_{J_e}] \times [r^{(J_e)}]}$, $e \in E_Y$, and the tensor $P^{(\hat{J})} \in \R^{ \bigtimes_{v \in p} [n_v] \times \bigtimes_{{e} \in \partial{E}_p} [r^{(J_e)}]}$ obtained via a contraction along the path $p$,

\begin{align*}
 \GJtensor
  = \netsum{\beta^{e}\, :\, {e} \in E_Y \setminus E_c} \netprod_{e \in E_Y}  \YJmatrix{e}
\ P^{(\hat{J})}_{\{\alpha_v\}_{v \in p},\{{\beta^{e}}\}_{e \in \partial{E}_p}}.
\end{align*}
whereas the path evaluation is given by
\begin{align*}
P^{(\hat{J})}_{\{\alpha_v\}_{v \in p},\{{\beta^{e}}\}_{e \in \partial{E}_p}}
= \tau_r(\{N_v\}_{v \in p})
 = \netsum{\beta^{{e}} \, :\, {e} \in \mathring{E}_{p}} \netprod_{v \in p} \Nvtensor{v}.
\end{align*}
As \cref{switchlemma} provides, we may replace $W^{({\hat{J}})} = W^{({\hat{J}})}_{\gamma,N,c}$.
The matrices $Y^{(J)}$, $J \in \mathcal{K}^{\mathcal{S}_c}$, then cancel out due to orthonormality and we obtain \cref{AtoM}
for
\begin{multline*}
\MJmatrix = \mlcr
 \netsum{\begin{array}{c} 
            \scriptstyle {\beta^{e}}',\beta^{e}\, :\, {e} \in E_Y \setminus E_c,\\
            \scriptstyle \alpha_v\,:\, v \in p
            \end{array} 
} 
\big( \netprod_{e \in \partial{E}_p \setminus \{e_1,\hat{e}\}} \deltatensor_{{\beta^{e}}',\beta^{e}} \big)
 \PJprime
 \HJmatrix{\hat{J}}{\hat{e}}
 \PJtensor
 \end{multline*}
As the term $H^{(J)}$ can similarly be simplified, we have
\begin{align*}
 \HJmatrix{\hat{J}}{\hat{e}}
 = 
            \netsum{\begin{array}{c} 
            \scriptstyle {\beta^{e}}',\beta^{e}\, :\, {e} \in E_Y \setminus \{\hat{e}\},\\
            \scriptstyle \alpha_v\,:\, v \in p
            \end{array} 
} 
\big( \netprod_{e \in E_Y \setminus \{\hat{e}\}} \deltatensor_{{\beta^{e}}',\beta^{e}} \big)
\PcJprime \PcJtensor,
 \end{align*}
 where
 \begin{align*}
 \PcJtensor
= \netsum{\beta^{e_1}} \Nvtensor{c} \PJtensor.
  \end{align*}
  By expanding and reordering the contractions within the path evaluations, we then arrive at \cref{theMJ,theHJ}.
\end{proof}
%
%% BOXTIMES %%
\def\deltatensor{\Delta}
\def\mlcr{}

\def\AJtensor{A^{({\hat{J}})}}
\def\MJmatrix{M^{(\hat{J})}}
\def\Nvprime#1{N_{#1}'}
\def\Nvtensor#1{N_{#1}}

\def\HJmatrix#1#2{H^{(#1)}}
\def\HJinv#1#2{(H^{(#1)} + \gamma I)^{-1}}

\def\GJprime{{G^{({\hat{J}})}}'}
\def\GJtensor{G^{({\hat{J}})}}
\def\WJmatrix{W^{({\hat{J}})}}
\def\YJmatrix#1{Y^{(J_{#1})}}
\def\PJprime{{P^{(\hat{J})}}'}
\def\PJtensor{P^{(\hat{J})}}
\def\PcJprime{{P^{(+c,\hat{J})}}'}
\def\PcJtensor{P^{(+c,\hat{J})}}

\def\BJ#1#2{B^{(J_{#1})}}
\def\BJtilde#1#2{\widetilde{B}^{(J_{#1})}}

\def\netprod{\mathop{\boxtimes}}
\def\netsum#1{\mathop{\boxtimes}}

\newpage
\section{Tensor nodes}\label{app:sec:reloptHT}
As indicated in \cref{sec:notdev}, we in the following dismiss the indices in tensor contractions. What is here introduced as notation, is a simplified version
of the formal arithmetic established in \cite{Kr20_Tre}.
\subsection{Self-emergent contractions}
Though it is clear by \cref{sec:reloptHT}, which tensors are assigned which labels, we here repeat this formal step. Such is indicated by writing
$X = X(\{\alpha_\mu\}_{\mu \in [d]}) \in \mathfrak{H}_{\{\alpha_\mu\}_{\mu \in [d]}}$
for any full tensor, or in case of its representation network $X = \tau_r(N)$, by
\begin{align*}
 N = \{N_v\}_{v \in V}, \quad N_v = N_v(\{\gamma\}_{\gamma \in \mathfrak{m}_v}) \in \mathfrak{H}_{\mathfrak{m}_v}.
\end{align*}
Avoiding the redundant notation such as in the expression \cref{eq:taur}, we simply write
\begin{align*}
 X = \netprod_{v \in V} \Nvtensor{v}.
\end{align*}
The same symbol is used for any other contraction, such as in \cref{eq:partcont}, translating to
\begin{align*}
 \tau_r(\{N_s\}_{s \in S})_{\{\alpha_s\}_{s \in S}} = \netprod_{v \in S} \Nvtensor{s}.
\end{align*}
For any label $\gamma$, we denote the priorly used Kronecker deltas as formal objects $\Delta_{\gamma',\gamma} \in \mathfrak{H}_{\{\gamma',\gamma\}}$, or equivalently so for more than two labels. 
Instead of explicitly denoting primed labels, we instead define
\begin{align*}
(N_v)_{\gamma \rightarrow \gamma'} := \Delta_{\gamma',\gamma} \netprod N_v, \quad \gamma \in \mathfrak{m}_v.
\end{align*}
As shorthand notation, we further define 
$N_v' = N_v'(\alpha_v,\{{\beta^e}'\}_{e \in E_v})$
as
 \begin{align*}
   N_v' := (N_v)_{\{{\beta^e}\}_{e \in E_v} \rightarrow \{{\beta^e}'\}_{e \in E_v}} := \big( \netprod_{e \in E_v} \Delta_{{\beta^e}',\beta^e} \big) \netprod N_v.
 \end{align*}
For other tensors, the operator $(\cdot)'$ likewise
denotes a priming of all labels $\{\beta^e\}_{e \in E}$ assigned to such.
The special case of an element-wise multiplication as in \cref{LXtensor} is 
flagged via a superindex
\begin{align*}
 L_c \boxtimes^\zeta L_v := \Delta_{\zeta'',\zeta',\zeta} \boxtimes ( \Delta_{\zeta',\zeta} \boxtimes L_c ) \boxtimes ( \Delta_{\zeta'',\zeta} \boxtimes L_v ).
\end{align*}
We may thus equivalently write \cref{LXtensor} as
\begin{align*}
 \mathcal{L}(X) = L \netprod X \in \mathfrak{H}_\zeta, \quad  L = \rho_{r_L}(\{L_v\}_{v \in V}) = \boxtimes^\zeta_{v \in V} L_v \in \mathfrak{H}_{\zeta \cup \{\alpha_\mu\}_{\mu \in [d]}}.
\end{align*}
The expression \cref{eq:LNprod} for instance takes the shorter shape
$\mathcal{L}(X) =  \netprod^\zeta_{v \in V} ( L_v \netprod N_v )$.

 \subsection{Alternative \texorpdfstring{\cref{thm:patheval,parthle0}}{theorems}}\label{sec:thmalt}
While we may write
 $A^{(\hat{J})} = A^{(\hat{J})}(\alpha_c',\{{\beta^e}'\}_{e \in E_c},\alpha_c,\{\beta^e\}_{e \in E_c})$, the identities in \cref{thm:patheval} become
 \begin{align} \label{app:AtoM}
 \AJtensor
  = \deltatensor_{\alpha_c',\alpha_c} \netprod \big( \netprod_{e \in E_c \setminus \{ e_1 \}} \deltatensor_{{\beta^e}',\beta^e}\big) \netprod \MJmatrix,
\end{align}
for $\MJmatrix = \MJmatrix({\beta^{e_1}}',\beta^{e_1})$ with
 \begin{align} \label{app:theMJ}
 \MJmatrix & = 
 \big( \netprod_{e \in \partial{E}_p \setminus \{e_1,\hat{e}\}} \deltatensor_{{\beta^{e}}',\beta^{e}} \big) \netprod
 \Big( \netprod_{v \in p}  \Nvprime{v} \netprod \Nvtensor{v} \Big) \netprod \HJinv{\hat{J}}{\hat{e}},
\end{align}
as well as
$\HJmatrix{\hat{J}}{\hat{e}} = \HJmatrix{\hat{J}}{\hat{e}}({\beta^{\hat{e}}}',\beta^{\hat{e}})$ given by
\begin{align} \label{app:theHJ}
 \HJmatrix{\hat{J}}{\hat{e}} = 
 \big( \netprod_{e \in E_Y \setminus \{\hat{e}\}} \deltatensor_{{\beta^{e}}',\beta^{e}} \big) \netprod_{v \in \{c\} \cup p} \Big( \Nvprime{v} \netprod \Nvtensor{v} \Big).
\end{align}
The identities appearing in the proof of \cref{thm:patheval}, in turn, 
become
\begin{align*}
 \AJtensor =  \GJprime \netprod
 \WJmatrix \netprod
 \GJtensor, \quad \GJtensor
  =  \YJmatrix{e}
\netprod P^{(\hat{J})}, \quad P^{(\hat{J})}
 = \netprod_{v \in p} \Nvtensor{v}
\end{align*}
These tensors thereby have labels $G^{(\hat{J})} = G^{(\hat{J})}(\{\alpha_{J_e}\}_{e \in E_Y},\alpha_c,\{\beta^e\}_{e \in E_c})$, as well as $Y^{(J_e)} = Y^{(J_e)}(\alpha_{J_e},\beta^e)$, $e \in E_Y$, and $P^{(\hat{J})} = P^{(\hat{J})}(\{\alpha_v\}_{v \in p \cap [d]}, \{\beta^e\}_{{e} \in \partial{E}_p})$.
Further,
\begin{align*}
\MJmatrix = 
\PJtensor_{\beta^{e_1} \rightarrow {\beta^{e_1}}',\ \beta^{\hat{e}} \rightarrow {\beta^{\hat{e}}}'} \netprod
 \HJmatrix{\hat{J}}{\hat{e}} \netprod
 \PJtensor
 \end{align*}
and similarly
\begin{align*}
 \HJmatrix{\hat{J}}{\hat{e}}
= \PcJtensor_{\beta^{\hat{e}} \rightarrow {\beta^{\hat{e}}}'} \netprod \PcJtensor, \quad \PcJtensor = \Nvtensor{c} \netprod \PJtensor
 \end{align*}
The identity in \cref{parthle0} on the other hand is simply
$
 \HJmatrix{\hat{J}}{\hat{e}} = (N_c)_{\beta^{\hat{e}} \rightarrow {\beta^{\hat{e}}}'} \netprod N_c
$.
\subsection{Proofs of \texorpdfstring{\cref{brevalLN,brevalA,brevalHJ}}{Propositions}}\label{app:sec:brancheval}
\def\Lv#1{L_{#1}}
\def\Stensor#1{S^{(J_{#1})}}
\def\hadzeta{{}^\zeta}
The recursion stated in \cref{brevalLN} is
 \begin{align*}
  F_c =
  \Lv{c}
  \boxtimes^\zeta_{v \in \mathrm{neigh}(c)} 
  \Stensor{\{c,v\}},
 \end{align*}
for $c \in V$ and
\begin{align*}
 \Stensor{\hat{e}} 
 = 
 (\Lv{v} \netprod \Nvtensor{v}) \boxtimes_{b \in \mathrm{desc}_c(v)}^\zeta
 \Stensor{\{v,b\}},
\end{align*}
with $\hat{e} = \{\mathrm{pred}_c(v),v\}$ for $v \in V \setminus \{c\}$.
\begin{proof}
 The recursion implies that
 $\Stensor{\hat{e}} = \netprod^\zeta_{h \in \mathrm{branch}_c(v)} (L_h \netprod N_h)$.
 Thereby,
 \begin{align*}
  F_c & := L_c \netprod^\zeta_{v \in V \setminus \{c\}} (L_v \netprod N_v)
   = L_c \netprod^\zeta_{v \in \mathrm{neigh}(c)} \netprod^\zeta_{h \in \mathrm{branch}_c(v)} (L_h \netprod N_h)
 \end{align*}
 provides the to be shown, first identity.
\end{proof}%
The recursion in \cref{brevalA} is
 \begin{align*}
  \sum_{\hat{J} \in \mathcal{K}}  \AJtensor = 
  \deltatensor_{\alpha_c',\alpha_c} \netprod \sum_{v \in \mathrm{neigh}(c)} \big( \netprod_{e \in E_c \setminus \{\{c,v\}\}} \deltatensor_{{\beta^e}',\beta^e}\big) \netprod \BJ{\{c,v\}}{\{c,v\}},
 \end{align*} 
 where for 
 $\hat{e} = \{p_{-1},v\}$, $p_{-1} = \mathrm{pred}_c(v)$, $v \in V \setminus \{c\}$, it is
  \begin{align*}
 \BJtilde{\{v,b\}}{\hat{e}} = 
(N_v)_{\beta^{\hat{e}} \rightarrow {\beta^{\hat{e}}}',\ \beta^{\{v,b\}} \rightarrow {\beta^{\{v,b\}}}'} \netprod
\BJ{\{v,b\}}{\{v,b\}} \netprod \Nvtensor{v},
 \end{align*}
for $b \in \mathrm{desc}_c(v)$.
\begin{proof}
By definition,
  \begin{align*}
 \sum_{\hat{J} \in \mathcal{K}} \AJtensor
  & = \sum_{v \in V \setminus \{c\}}  \deltatensor_{\alpha_c',\alpha_c} \netprod  \big( \netprod_{e \in E_c \setminus \{ \{c,p_1(v)\} \}} \deltatensor_{{\beta^e}',\beta^e}\big) \netprod M^{(J_{\{\mathrm{pred}_c(v),v\}})} \\
  & = \deltatensor_{\alpha_c',\alpha_c} \netprod \sum_{v \in \mathrm{neigh}(c)} \big( \netprod_{e \in E_c \setminus \{\{c,v\}\}} \deltatensor_{{\beta^e}',\beta^e}\big) \netprod
  \sum_{b \in \mathrm{branch}_c(v)} M^{(J_{\{\mathrm{pred}_c(b),b\}})}.
\end{align*}
Let each be $p = (c,h)$, with $e_1 = \{c,p_1\}$, $\hat{e} = \{p_{-1},h\}$, for $h \in V \setminus \{c\}$. We show that 
\begin{align*}
 \sum_{b \in \mathrm{branch}_c(h)} M^{(J_{\{\mathrm{pred}_c(b),b\}})}
 & = \big( \netprod_{e \in \partial{E}_{p} \setminus \{e_1,\hat{e}\}} \deltatensor_{{\beta^{e}}',\beta^{e}} \big) \netprod
 \Big( \netprod_{v \in {p}}  \Nvprime{v} \netprod \Nvtensor{v} \Big) 
 \netprod B^{(J_{\hat{e}})}
\end{align*}
by induction over the cardinality of $\mathrm{branch}_c(h)$. The induction start for a cardinality of $1$ is then given by \cref{app:theMJ} and the definition of $B^{(J)}$. In turn, given the tree structure of $G$ and the induction hypothesis, it follows that
\begin{align*}
 & \ \sum_{b \in \mathrm{branch}_c(h)} M^{(J_{\{\mathrm{pred}_c(b),b\}})}
= M^{(J_{\hat{e}})} + \sum_{b \in \mathrm{desc}_c(h)} \sum_{w \in \mathrm{branch}_c(b)}
 M^{(J_{\{\mathrm{pred}_c(w),w\}})} 
 \\
= & \ \big( \netprod_{e \in \partial{E}_{p} \setminus \{e_1,\hat{e}\}} \deltatensor_{{\beta^{e}}',\beta^{e}} \big) \netprod
 \Big( \netprod_{v \in {p}}  \Nvprime{v} \netprod \Nvtensor{v} \Big) \netprod (H^{(J_{\hat{e}})} + \gamma I)^{-1} 
 \\
+ & \ \sum_{b \in \mathrm{desc}_c(h)} 
\big( \netprod_{e \in \partial{E}_{(c,b)} \setminus \{e_1,\{h,b\}\}} \deltatensor_{{\beta^{e}}',\beta^{e}} \big) \netprod
 \Big( \netprod_{v \in {(c,b)}}  \Nvprime{v} \netprod \Nvtensor{v} \Big) 
 \netprod B^{(J_{\{h,b\}})}
 \\
 = & \ \big( \netprod_{e \in \partial{E}_{p} \setminus \{e_1,\hat{e}\}} \deltatensor_{{\beta^{e}}',\beta^{e}} \big) \netprod
 \Big( \netprod_{v \in {p}}  \Nvprime{v} \netprod \Nvtensor{v} \Big) \netprod \Big( (H^{(J_{\hat{e}})} + \gamma I)^{-1} 
 \\
+ & \ \sum_{b \in \mathrm{desc}_c(h)} \big( \netprod_{e \in E_h \setminus \{\hat{e},\{h,b\}\}} \Delta_{{\beta^e}',\beta^e} \big) \netprod
N_h' \netprod
\BJ{\{h,b\}}{\{v,b\}} \netprod \Nvtensor{h} \Big) 
\\
= & \ \big( \netprod_{e \in \partial{E}_{p} \setminus \{e_1,\hat{e}\}} \deltatensor_{{\beta^{e}}',\beta^{e}} \big) \netprod
 \Big( \netprod_{v \in {p}}  \Nvprime{v} \netprod \Nvtensor{v} \Big) \netprod \Big( (H^{(J_{\hat{e}})} + \gamma I)^{-1} 
 \\
+ & \ \sum_{b \in \mathrm{desc}_c(h)} 
(N_h)_{\beta^{\hat{e}} \rightarrow {\beta^{\hat{e}}}',\ \beta^{\{h,b\}} \rightarrow {\beta^{\{h,b\}}}'} \netprod
\BJ{\{h,b\}}{\{v,b\}} \netprod \Nvtensor{h} \Big).
\end{align*}
The last to be shown step follows as by definition
\begin{align*}
 B^{(J_{\hat{e}})} = (H^{(J_{\hat{e}})} + \gamma I)^{-1} +  \sum_{b \in \mathrm{desc}_c(h)}
  (N_h)_{\beta^{\hat{e}} \rightarrow {\beta^{\hat{e}}}',\ \beta^{\{h,b\}} \rightarrow {\beta^{\{h,b\}}}'} \netprod
\BJ{\{h,b\}}{\{h,b\}} \netprod \Nvtensor{h}.
\end{align*}
\end{proof}
The recursion in \cref{brevalHJ}, for $\hat{e} = \{p_{-1},v\}$, $p_{-1} = \mathrm{pred}_c(v)$, $v \in V \setminus \{c\}$, is
\begin{align*}
\HJmatrix{J_{\{v,b\}}}{\{v,b\}} = 
(N_v)_{\beta^{\{v,b\}} \rightarrow {\beta^{\{v,b\}}}',\ \beta^{\hat{e}} \rightarrow {\beta^{\hat{e}}}'}
\netprod
\HJmatrix{J_{\hat{e}}}{\hat{e}} \netprod \Nvtensor{v},
\end{align*}
for $b \in \mathrm{desc}_c(v)$.
\begin{proof}
Using the identity \cref{app:theHJ} on both sides, we obtain
\begin{align*}
&\ (N_v)_{\beta^{\{v,b\}} \rightarrow {\beta^{\{v,b\}}}',\ \beta^{\hat{e}} \rightarrow {\beta^{\hat{e}}}'}
\netprod
\HJmatrix{J_{\hat{e}}}{\hat{e}} \netprod \Nvtensor{v} 
\\
=&\ 
(N_v)_{\beta^{\{v,b\}} \rightarrow {\beta^{\{v,b\}}}',\ \beta^{\hat{e}} \rightarrow {\beta^{\hat{e}}}'}
\netprod
 \big( \netprod_{e \in \partial{E}_{\{c\} \cup p} \setminus \{\hat{e}\}} \deltatensor_{{\beta^{e}}',\beta^{e}} \big) \netprod_{w \in \{c\} \cup p} \Big( \Nvprime{w} \netprod \Nvtensor{w} \Big)
 \netprod \Nvtensor{v}
 \\
 = &\ 
 \big( \netprod_{e \in E_v \setminus \{\hat{e},\{v,b\}\}} \Delta_{{\beta^e}',\beta^e} \big) \netprod N_v'
\netprod
 \big( \netprod_{e \in \partial{E}_{\{c\} \cup p} \setminus \{\hat{e}\}} \deltatensor_{{\beta^{e}}',\beta^{e}} \big) \netprod_{w \in \{c\} \cup p} \Big( \Nvprime{w} \netprod \Nvtensor{w} \Big)
 \netprod \Nvtensor{v} \\
 =&\ 
 \big( \netprod_{e \in \partial{E}_{\{c\} \cup (c,b)} \setminus \{\{v,b\}\}} \deltatensor_{{\beta^{e}}',\beta^{e}} \big) \netprod_{w \in \{c\} \cup (c,b)} \Big( \Nvprime{w} \netprod \Nvtensor{w} \Big) = \HJmatrix{J_{\{v,b\}}}{\{v,b\}},
\end{align*}
which was to be shown.
\end{proof}

\newpage
\subsection{Detailed \texorpdfstring{\AIRLS{}-$0K$}{AIRLS-0K} algorithm}
\Cref{AIRLS-0K} summarizes the \AIRLS{}-$0\mathcal{K}$ method as covered in \cref{AIRLS}.
In our experiments, we have chosen
the therein appearing constant as $c_{\mathcal{L}} = \frac{1}{4} |\mathcal{K}|^{-1} \|L\|_F^2 / n_{[d]}$.
The heuristics laid out in \cref{sec:practheurasp} are marked as \textit{possibly} applicable statements.\\

 \begin{breakablealgorithm}
  \caption{Detailed \AIRLS{}-$0\mathcal{K}$ method}
  \begin{algorithmic}[1]\label{AIRLS-0K}
  \STATE{derive tree $G = (V,E)$ from $\mathcal{K}$}
  \STATE{set $N =\{N\}_{v \in V} \in \mathcal{D}_r$, $\gamma^{(0)} > 0$, $c_0 \in V$}
  \STATE{\textit{possibly} introduce validation set (cf. \cref{sec:valiset})}
   \item[]
   \item[] \COMMENTx{let $J_e \in \mathcal{K}^{\mathcal{S}_{c_{0}}}$, $e \in E$}

   \item[] \COMMENTx{orthonormalize $\{N_v\}_{v \in V}$ with respect to $c_0$ (cf. \cref{canonN}) and initialize the branch evaluations $\{S^{(J_e)}\}_{e \in E}$ (cf. \cref{brevalLN})}
  \STATE{$\widetilde{V} \sgets \mathrm{leaves}(c_{0})$}
  \WHILE{$\widetilde{V} \neq \{c_0\}$ }  
  \FOR{$v \in \widetilde{V} \setminus \{c_0\}$}
  \STATE{$p_{-1} \sgets \mathrm{pred}_{c_0}(v)$, $\hat{e} \sgets \{p_{-1},v\}$} 
  \STATE{$QR \sgets N_v^{[\mathfrak{m}_v \setminus \{\beta^{\hat{e}}\}]}$, $N_v^{[\mathfrak{m}_v \setminus \{\beta^{\hat{e}}\}]} \sgets Q$, $N_{p_{-1}}^{[\beta^{\hat{e}}]} \sgets R N_{p_{-1}}^{[\beta^{\hat{e}}]}$} 
  \STATE{$S^{(J_{\hat{e}})} \sgets (L_v \boxtimes N_v) \boxtimes^\zeta_{h \in \mathrm{desc}_{c_0}(v)} S^{(J_{\{v,h\}})}$}
  \ENDFOR{} 
      \STATE{$\widetilde{V} \sgets \bigcup_{v \in \widetilde{V}} \mathrm{pred}_{c_0}(v)$}
  \ENDWHILE 
  \item[] 
     \item[] \COMMENTx{implicitly declare iterate $X^{(0)} = \tau_r(\{N_v\}_{v \in V})$}
  \FOR{$i = 1,2,\ldots$}
   \item[] \COMMENTx{calculate $\{H^{(i-1,J)}\}_{J \in \mathcal{K}^{\mathcal{S}_{c_{i-1}}}}$ (cf. \cref{brevalHJ,parthle0}) or \textit{possibly} limit set via maximal distance of ${\widetilde{v}} \in \widetilde{V}$ to $c_{i-1}$:}
  \FOR{$v \in \mathrm{neigh}(c_{i-1})$}
    \STATE{$\hat{e} \sgets \{c_{i-1},v\}$}
    \STATE{$\HJmatrix{J_{\hat{e}}}{\hat{e}} \sgets (N_{c_{i-1}})_{\beta^{\hat{e}} \rightarrow {\beta^{\hat{e}}}'} \netprod N_{c_{i-1}}$}
  \ENDFOR
    
   \STATE{$\widetilde{V} \sgets \bigcup_{v \in \mathrm{neigh}(c_{i-1})} \mathrm{desc}_{c_{i-1}}(v)$}
  \WHILE{$\widetilde{V} \neq \mathrm{leaves}(c_{0})$ } 
\FOR{${\widetilde{v}} \in \widetilde{V} \setminus \mathrm{lea{\widetilde{v}}es}(c_{0})$}
\STATE{$p_{-1} \sgets \mathrm{pred}_{c_0}({\widetilde{v}})$, $\hat{e} \sgets \{p_{-1},{\widetilde{v}}\}$} 
    \FOR{$b \in \mathrm{desc}_{c_{i-1}}({\widetilde{v}})$}
        \STATE{$\HJmatrix{J_{\{{\widetilde{v}},b\}}}{\{{\widetilde{v}},b\}} \sgets 
(N_{\widetilde{v}})_{\beta^{\{{\widetilde{v}},b\}} \rightarrow {\beta^{\{{\widetilde{v}},b\}}}',\beta^{\hat{e}} \rightarrow {\beta^{\hat{e}}}'}
\netprod
\HJmatrix{J_{\hat{e}}}{\hat{e}} \netprod \Nvtensor{{\widetilde{v}}}$}
    \ENDFOR
\ENDFOR
  \STATE{$\widetilde{V} \sgets \bigcup_{{\widetilde{v}} \in \widetilde{V}} \mathrm{desc}_{c_{i-1}}({\widetilde{v}})$}
\ENDWHILE 
\item[]
\item[] \COMMENTx{calculate $\{B^{(i-1,J)}\}_{J \in \mathcal{K}^{\mathcal{S}_{c_{i-1}}}}$ (cf. \cref{brevalA}) or \textit{possibly} limit set via maximal distance of ${\widetilde{v}} \in \widetilde{V}$ to $c_{i-1}$:}
  \STATE{$\widetilde{V} \sgets \mathrm{leaves}(c_{0})$}
  \WHILE{$\widetilde{V} \neq \{c_0\}$ }  
  \FOR{${\widetilde{v}} \in \widetilde{V} \setminus \{c_0\}$}
  \STATE{$p_{-1} \sgets \mathrm{pred}_{c_0}({\widetilde{v}})$, $\hat{e} \sgets \{p_{-1},{\widetilde{v}}\}$} 
\FOR{$b \in \mathrm{desc}_{c_{i-1}}({\widetilde{v}})$}
  \STATE{$\widetilde{B}^{(i-1,J_{\{{\widetilde{v}},b\}})} \sgets (N_h)_{\beta^{\hat{e}} \rightarrow {\beta^{\hat{e}}}', \beta^{\{{\widetilde{v}},b\}} \rightarrow {\beta^{\{{\widetilde{v}},b\}}}'} \netprod
B^{(i-1,J_{\{{\widetilde{v}},b\}})}  \netprod \N{\widetilde{v}}tensor{{\widetilde{v}}}$}
\ENDFOR
  \STATE{$B^{(i-1,J_{\hat{e}})} \sgets (H^{(i-1,J_{\hat{e}})} +  \gamma^{(i-1)} I)^{-1} + \sum_{b \in \mathrm{desc}_{c_{i-1}}({\widetilde{v}})} \widetilde{B}^{(J_{\{{\widetilde{v}},b\}})}$}
  \ENDFOR 
  \STATE{$\widetilde{V} \sgets \bigcup_{{\widetilde{v}} \in \widetilde{V}} \mathrm{pred}_{c_0}({\widetilde{v}})$}
  \ENDWHILE 
  \item[]
  \item[] \COMMENTx{solve and update representation (cf. \cref{updateformula,brevalA,brevalLN}) \textit{possibly} using iterative solver (cf. \cref{para:cg})}
  \STATE{$  A^{(i-1)} :=
  \deltatensor_{\alpha_c',\alpha_c} \netprod \sum_{v \in \mathrm{neigh}({c_{i-1}})} \big( \netprod_{e \in E_{c_{i-1}} \setminus \{\{{c_{i-1}},v\}\}} \deltatensor_{{\beta^e}',\beta^e}\big) \netprod B^{(J_{{\{{c_{i-1}},v\}}})}$}
    \STATE{$F^{(i-1)} \sgets L_{c_{i-1}}
  \boxtimes^\zeta_{v \in \mathrm{neigh}(c_{i-1})} 
  S^{(J_{\{c_{i-1},v\}})}$}
  \STATE{solve $(F^{(i-1)}_{\mathfrak{m}_{c_{i-1}} \rightarrow \mathfrak{m}_{c_{i-1}}'} \boxtimes F^{(i-1)} + c_{\mathcal{L}} \gamma^{(i-1)} A^{(i-1)}) \boxtimes N^+_{c_{i-1}} = F^{(i-1)}_{\mathfrak{m}_{c_{i-1}} \rightarrow \mathfrak{m}_{c_{i-1}}'} \boxtimes y$}
  \STATE{$N_{c_{i-1}} \sgets N_{c_{i-1}}^+$}
  \item[]
\item[] \COMMENTx{let $J_e \in \mathcal{K}^{\mathcal{S}_{c_{i}}}$, $e \in E$, and shift root:}
    \STATE{set $c_i \in V \setminus \{c_{i-1}\}$ (cf. \cref{repbascal})}
    \item[]
    \item[] \COMMENTx{orthonormalize $\{N_v\}_{v \in V}$ with respect to $c_i$ (cf. \cref{canonN}) and supplement the missing branch evaluations of $\{S^{(J_e)}\}_{e \in E}$ (cf. \cref{brevalLN})}
     \STATE{set $p$ as path from including $c_{i-1}$ to including $c_i$ (cf. \cref{sec:graphtheory})} 
        \FOR{$j = 2,\ldots,|p|$}
        \STATE{$\hat{e} \sgets \{p_{j-1},p_{j}\}$} 
        \STATE{$U \diag(\sigma^{(J_{\hat{e}})}) V^T \sgets N_{p_{j-1}}^{[\mathfrak{m}_{p_{j-1}} \setminus \{\beta^{\hat{e}}\}]}$}
        \STATE{\textit{possibly} adapt rank $r \in \N^\mathcal{K}$ via according modification of the SVD components $U$, $\diag(\sigma^{(J_{\hat{e}})})$ and $V^T$ (cf. \cref{sec:explra})} \STATE{$N_{p_{j-1}}^{[\mathfrak{m}_{p_{j-1}} \setminus \{\beta^{\hat{e}}\}]} \sgets U \diag(\sigma^{(J_{\hat{e}})})$, $N_{p_{j}}^{[\beta^{\hat{e}}]} \sgets V^T N_{p_{j}}^{[\beta^{\hat{e}}]}$} 
  \STATE{$S^{(J_{\hat{e}})} \sgets (L_{p_{j-1}} \boxtimes N_{p_{j-1}}) \boxtimes^\zeta_{h \in \mathrm{desc}_{c_i}({p_{j-1}})} S^{(J_{\{{p_{j-1}},h\}})}$}
    \ENDFOR
    \item[]
      \item[] \COMMENTx{implicitly declare iterate $X^{(i)} = \tau_r(\{N_v\}_{v \in V})$}
    \STATE{set $\gamma^{(i)} \leq \gamma^{(i-1)}$ (\textit{possibly} bound from above, cf. \cref{sec:adaptgammma})}
     \STATE{consider break \textit{possibly} also based on validation residual}
  \ENDFOR
    \STATE{\textit{possibly} repeat as post-iteration with adapted parameters to appropriately truncate representation (cf. \cref{para:postiter})}
    \end{algorithmic}
\end{breakablealgorithm}

\newpage
\section{Practical and Heuristic Aspects}\label{sec:practheurasp}
In \cref{exp2}, the \textsc{AIRLS}-$0\mathcal{K}$ algorithm is enhanced through the use of the following heuristics as embedded in \cref{AIRLS-0K}.

\subsection{Validation set}\label{sec:valiset}
A fraction of measurements is passively used to instead validate the progress allowing for more suitable breaking criteria and to adaptively control the parameter $\gamma$ (cf. \cref{sec:adaptgammma}). This however assumes that the algorithm despite the decreased number of actively used measurements still converges to the essentially same solution.

\subsection{Adaptive decay of regularization parameter \texorpdfstring{$\gamma$}{gamma}}\label{sec:adaptgammma}
Practice shows that, additionally to the constant decline, carefully bounding $\gamma$ from above by a value proportional to the residual norm on the validation measurements (cf. \cref{sec:valiset}) can speed up convergence considerably without infringing upon the approximation.

\subsection{Explicit rank adaption}\label{sec:explra}
The \textsc{AIRLS}-$0\mathcal{K}$ algorithm necessarily relies on the choice of some $\{r^{(J)}\}_{J \in \mathcal{K}}$ which bounds the ranks of the iterate. An adaptive determination can save a considerable amount of computational complexity. Introducing or removing a singular value $\sigma^{(J)}_{r^{(J)}}(X)$ (thus changing the rank of the iterate), that is small compared to $\gamma$, only marginally influences the iteration.
A method that has proven itself reliable in practice is to adapt each single rank $r^{(J)}$, $J \in \mathcal{K}$, such that always
$\sigma^{(J)}_{r^{(J)}-2}(X) > \frac{1}{2} \sqrt{\gamma}$, but $\sigma^{(J)}_{r^{(J)}-1}(X) < \frac{1}{2} \sqrt{\gamma}$.
Thereby, there are always exactly two comparatively low singular values with respect to each subset $J$.

\subsection{\texorpdfstring{\AIRLS-$0\mathcal{K}$}{AIRLS-0K} internal post-iteration}\label{para:postiter}
In particular if the ranks are explicitly adapted, some singular values of the final iterate may be small enough such that
a truncation of such seems more reasonable. Instead of a separate procedure that does not consider the original problem setting,
a better approximation can be achieved by letting the algorithm proceed some additional iterations but with adapted meta parameters
and for a specifically chosen value $\gamma$. Alternatively for small dimensions, the post-iteration scheme as discussed in \cref{sec:postiter} may be utilized.

\subsection{Solving linear subsystems with iterative solvers}\label{para:cg}
The linear subproblems
that appear in each optimization step might become too large to solve explicitely using ordinary, full matrix vector calculus.
Iterative solvers, such as preconditioned CG, can be applied to reduce the order of complexity significantly by exploiting the given low rank as well as additive structures.
Whether this is truly beneficial naturally depends on the exact sizes that are involved, and not least the implementation.

\end{document}